\journal{arXiv}
\newcommand{\dd}{ { \mathrm{d}} }
\newcommand{\R}{\mathbb {R}}
\newcommand{\D}{\mathcal {D}}
\newcommand{\BC}{\mathrm{LR}}
\newtheorem{theorem}{Theorem}
\newtheorem{proposition}[theorem]{Proposition}
\newtheorem{corollary}[theorem]{Corollary}
\newtheorem{lemma}[theorem]{Lemma}
\newdefinition{definition}[theorem]{Definition}
\newdefinition{remark}[theorem]{Remark}
\begin{document}

\begin{frontmatter}

\title{FRACTIONAL PARTIAL DIFFERENTIAL EQUATIONS WITH BOUNDARY CONDITIONS}
\fntext[marsden]{Baeumer and Kov\'acs were partially funded by the Marsden Fund administered by the Royal Society of New Zealand.}

\author{Boris Baeumer\fnref{marsden}}
\address{University of Otago, New Zealand}\ead{bbaeumer@maths.otago.ac.nz}

\author{Mih\'aly Kov\'acs\fnref{marsden}}
\address{Chalmers University of Technology, Sweden}
\ead{mihaly@chalmers.se}

\author{Harish Sankaranarayanan\fnref{hari}}
\address{Michigan State University, USA}\fntext[hari]{Sankaranarayanan was supported by ARO MURI grant W911NF-15-1-0562.}
\ead{harish@stt.msu.edu}

\begin{abstract}
  We identify the stochastic processes associated with one-sided fractional partial differential equations on a bounded domain with various boundary conditions.  This is essential for modelling using spatial fractional  derivatives. We show well-posedness of the associated Cauchy problems in $C_0(\Omega)$ and $L_1(\Omega)$. In order to do so we develop a new method of embedding finite state Markov processes into Feller processes and then show convergence of the respective Feller processes. This also gives a numerical approximation of the solution. The proof of well-posedness closes a gap in many numerical algorithm articles approximating solutions to fractional differential equations that use the Lax-Richtmyer Equivalence Theorem to prove convergence without checking well-posedness. 
\end{abstract}

\begin{keyword}
 nonlocal operators, fractional differential equations, stable processes, reflected stable processes, Feller processes
\end{keyword}

\end{frontmatter}


\section{Introduction}
\label{introduction}
The Fokker-Planck equation of a L\'evy stable process on $\mathbb R$ is a fractional-in-space partial differential equation. The (spatial) fractional derivative operator employed therein is non-local with infinite reach. In this article we investigate one-sided fractional derivative operators on a bounded interval $\Omega$ with various boundary conditions. We also identify the stochastic processes whose marginal densities are the fundamental solutions to the corresponding fractional-in-space partial differential equations. 
That is, we show convergence of easily identifiable (sub)-Markov processes (that are essentially finite state),  to a (sub)-Markov process governed by a Fokker-Planck equation on a bounded interval where the spatial operator is a truncated fractional derivative operator with appropriate boundary conditions. 
This is achieved using the Trotter-Kato Theorem \cite[p. 209]{Engel2000}, regarding convergence of Feller semi-groups on $C_0 (\Omega)$ and strongly continuous positive contraction semigroups on $L_1(\Omega)$, and hence showing process convergence \cite[p. 331, Theorem 17.25]{Kallenberg1997}. As a by-product this closes a gap in  numerical algorithm articles approximating solutions to fractional in space differential equations that use the Lax-Richtmyer Equivalence Theorem to prove convergence without checking well-posedness of the problem. 

On the PDE side this article extends ideas of \cite{Zhang2007b,Castillo-Negrete2006} for no-flux boundary conditions and \cite{Podlubny2009,Du2012} and others for Dirichlet boundary conditions. We link them to ideas on reflected and otherwise modified stochastic stable processes  of \cite{Patie2017,Patie2012,Baeumer2015}. 

A significant part of this article can be found in modified and extended form in the Ph.D. thesis of Harish Sankaranarayanan, \cite{Sankaranarayanan2014}.

\subsection{Preliminaries and notation for stochastic processes} 
As we are going to embed finite state Markov processes within Feller processes we start with some basic properties of finite state \emph{sub-Markov processes}. 

A finite state sub-Markov process $(X^n_t)_{t \geq 0}\subset\{1,\ldots,n\}$ is uniquely determined by its \emph{transition rate matrix} $G_{n\times n}$. The diagonal entries $g_{i,i}\le 0$ denote the total rate at which particles leave state $i$ and the entries $g_{i,j}\ge 0,\;i\neq j$ denote the rate at which particles move from state $i$ to state $j$. Furthermore, for each $i$, $-g_{i,i}\ge \sum_{j\neq i} g_{i,j}$, with the difference being the rate at which particles are removed from the system if they are in state $i$.  

The transition rate matrix is the \emph{infinitesimal generator} of the stochastic process. It is also the generator of a semigroup of linear operators $(S(t))_{t \geq 0}$ on $\ell^n_\infty=(\R^n,\|.\|_\infty)$ satisfying the system of ODEs (called the backwards equation)
$$S'(t)\mathbf f=G_{n\times n}S(t)\mathbf f; S(0)\mathbf f=\mathbf f$$ or $$ S(t)\mathbf f=e^{tG_{n \times n}}\mathbf f.$$
If we let $f(j)=\mathbf f_j$, then $$ \left(S(t)\mathbf f\right)_i=\mathbb{E}[f(X^n_t)|X^n_0=i];$$i.e, the $i$-component of $S(t)\mathbf f$ is the expectation of $f(X^n_t)$ conditioned on $X^n_0=i$. In particular, for  a target $\mathbf f=\mathbf e_j$, at each coordinate $i$, $\left(S(t)\mathbf f\right)_i$ is the probability that the process is now in state $j$ given that it started $t$ time units earlier in state $i$. This is why $\left(S(t)\right)_{t\ge 0}$ is called the \emph{backwards semigroup} of the process $(X^n_t)_{t \geq 0}$.

This is in contrast to the \emph{forward semigroup} $(T(t))_{t \geq 0}$ acting on $\mathbf g\in \ell^n_1=(\R^n,\|.\|_1)$ with $T(t):\mathbf g\mapsto \mathbf g^T e^{tG_{n \times n}}=e^{tG_{n \times n}^*}\mathbf g$, or $T(t)\mathbf g$ being the unique solution to the Fokker-Planck equation
$$T'(t)\mathbf g=G^*_{n\times n}T(t)\mathbf g;\quad T(0)\mathbf g=\mathbf g.$$ Here $G_{n \times n}^*$ is the adjoint of $G_{n \times n}$ and $\mathbf g^T$ the transpose of $\mathbf g$. Now $T(t)\mathbf g$ is the probability distribution of $(X^n_t)_{t \geq 0}$ given that the initial probability distribution of $X^n_0$ is $\mathbf g$.  In particular, if $\mathbf g=\mathbf e_j$, $\left(T(t)\mathbf g\right)_i$ is the probability that $X^n_t=i$ given that $X^n_0=j$. 


Recall (e.g., \cite{Bottcher2013,Kolokoltsov2011}) that these concepts are extendable to general sub-Markov processes taking values in a locally compact separable metric space $\Omega$ with the backward semigroup $(S(t))_{t \geq 0}$ acting on bounded functions
\begin{equation}\label{backward}
S(t)f(x)=\mathbb{E}[f(X_t)|X_0=x]
\end{equation}
 and the forward semigroup $(S^*(t))_{t \geq 0}$ acting on regular Borel measures on $\Omega$, 
$$S^*(t)\mu=\mbox{distribution of }X_t|X_0\mbox{ has distribution } \mu.$$
Let  $$C_0(\Omega)=\overline{C_c^\infty(\Omega)}^{\|\cdot\|_\infty},$$ the closure of the space of smooth functions with compact support within $\Omega$ under the supremum norm. Note that if $\Omega$ is an open set then $C_0(\Omega)$ is the space of continuous functions that converge to zero at the boundary and if $\Omega$ is compact, then $C_0(\Omega)$ is the space of continuous functions on $\Omega$. 
If $(X_t)_{t \geq 0}$ is a sub-Markov process such that its backwards semigroup $(S(t))_{t \geq 0}$ is a positive, strongly continuous contraction semigroup that leaves $C_0(\Omega)$ invariant, then $(X_t)_{t \geq 0}$ is called a \emph{Feller process}. On the other hand, for any positive, strongly continuous, contraction semigroup $(S(t))_{t \geq 0}$ on $C_0(\Omega)$ (called a Feller semigroup) there exists a Feller process $(X_t)_{t \geq 0}$ with $(S(t))_{t \geq 0}$ as its backwards semigroup \cite[Chapter 17]{Kallenberg1997}.

For Feller processes the forward semigroup $\left(S^*(t)\right)_{t\ge 0}$ is the adjoint semigroup and in many cases leaves $L_1(\Omega)$, considered as a subspace of the Borel measures, invariant (identifying Borel measures with their densities, if they exist).  In this case the restriction of $S^*$ to $L_1(\Omega)$,
\begin{equation}\label{forward} T(t)f= \mbox {density of }X_t|X_0\mbox{ has density } f\end{equation}  is strongly continuous on $L_1(\Omega)$. 

The connection to differential equations comes from the fact that each semigroup has a generator $(A_S, \D(A_S))$ or $(A_T,\D(A_T))$ with $$\D(A_S)=\left\{f:\lim_{h\to 0}\frac{S(h)f-f}h=:A_Sf\mbox{ exists}\right\}$$ and $\D(A_T)$ defined analogously. It can be shown that the domains $\D(A_S)$ and $\D(A_T)$ are dense and the operators are linear, closed, and dissipative (usually (pseudo-)differential operators). Furthermore, the semigroups are the unique solution operators to the initial value problems on the respective Banach spaces
$$u'(t)=Au(t); u(0)=f.$$
We also call these equations the \emph{governing (backward ($A=A_S$) or forward ($A=A_T$)) differential equations}. Boundary conditions are encoded in $\Omega$ or in the definition of $\D(A)$.

\subsection{The spectrally positive stable process on $\mathbb R$ and fractional derivatives}

Let $Y_t$ be the spectrally positive $\alpha$-stable process for some $1<\alpha< 2$; i.e., the L\'evy process with Fourier transform
$$\mathbb E[e^{-ikY_t}]= e^{t(ik)^\alpha}.$$
This zero mean process has the property of being a pure positive jump process that has negative drift; i.e. $Y_{\inf}(t)=\inf_{0\le \tau\le t} Y_\tau$ is continuous and $Y_{\max}(t)=\max_{0\le \tau\le t} Y_\tau$ is piecewise constant \cite{Bertoin1996a}. 
Its governing backward equation is
$$u'(t)=D_-^\alpha u(t); u(0)=u_0\in C_0(\R)$$ and the governing forward equation (Fokker-Planck) is 
$$u'(t)=D_+^\alpha u(t); u(0)=u_0\in L_1(\R),$$
where the \emph{fractional derivatives} are defined via 
$$D_\pm^\alpha f=\left((\pm\, \cdot)^\alpha \hat f(\cdot)\right)^\vee$$ with
$(\cdot)^\vee$ denoting the inverse Fourier transform and $\hat f$ denoting the Fourier transform of $f$. 
This definition for functions whose fractional derivative is in $L_1(\R)$ or $C_0(\R)$ is equivalent to the left-sided fractional derivative being defined via
$$D_+^\alpha f(x)=\frac{d^k}{dx^k}\int_{-\infty}^x\frac{(x-s)^{k-\alpha-1}}{\Gamma(k-\alpha)}f(s)\,ds; \quad \alpha<k<\alpha+1,$$ 
and the shifted Gr\"unwald approximation
$$D_+^\alpha f(x)=\lim_{h\to 0}\frac1{h^\alpha}\sum_{k=0}^\infty \begin{pmatrix}\alpha\\k\end{pmatrix}(-1)^k f(x+h-kh).$$ 
The right-sided derivative $D_-^\alpha f$ is defined via symmetry, replacing $x$ with $-x$ and $f(s)$ with $f(-s)$, giving
\begin{equation*}\begin{split}D_-^\alpha f(x)=& \frac{d^k}{(-dx)^k}\int_{-\infty}^{-x}\frac{(-x-s)^{k-\alpha-1}}{\Gamma(k-\alpha)}f(-s)\,ds\\
& \frac{d^k}{(-dx)^k}\int_{x}^{\infty}\frac{(s-x)^{k-\alpha-1}}{\Gamma(k-\alpha)}f(s)\,ds\\
=&\lim_{h\to 0}\frac1{h^\alpha}\sum_{k=0}^\infty \begin{pmatrix}\alpha\\k\end{pmatrix}(-1)^k f(x-h+kh),\quad\alpha<n<\alpha+1.\end{split}
\end{equation*}
Note that, for the forward equation $u'(t,x)=D^\alpha_+u(t,x)$, the non-local structure of the fractional derivatives encodes the rate of change at a location $x$ as a weighted average of what can jump from the left to that location and what can drift in from the nearest neighbour on the right.

\subsection{Preliminaries for fractional derivative operators on bounded domains} 
\label{sectionpreliminariesfract}

We begin with the necessary preparations in order to facilitate the definition of fractional derivative operators on a bounded domain. In what follows, to make use of the inherent symmetry of right-sided and left-sided derivatives, integrals, etc., we will work on the bounded interval $[-1,1]$. Then what holds for a left-sided operation will hold for a right-sided operation after replacing $x$ by $-x$. Therefore, we will focus on the left-sided operation unless explicitly stated.

Let $p_{\beta}$ denote the power functions given by
$$ p^\pm_{\beta}(x):= \frac{(1\pm x)^{\beta}}{\Gamma (\beta +1)},$$ $x \in (-1,1) $ and $\beta > -1$. We will usually drop the superscript if the context is clear (left-sided with $+$ and right-sided with $-$).  We use $p_0$ and $\bf{0}$ to denote the constant \textit{one} function and the \textit{zero} function on the interval $[-1,1]$, respectively.

For $\nu >0$ and $f \in L_1[-1,1]$, the (left-sided) fractional integral can be written as $$I^{\nu}_+ f(x)=\int_{-1}^x p^+_{\nu-1} (x-s-1)f(s)\;\dd s$$ and the (right-sided) fractional integral
$$I^{\nu}_- f(x)=\int_{-1}^{-x} p^+_{\nu-1} (-x-s-1)f(-s)\;\dd s=\int_x^1  p^-_{\nu-1}(x-s+1)f(s)\,ds.$$

 Note that
\begin{equation}
\label{Ialphapfunction}
I^{\nu}_\pm p^\pm_{\beta}  = p^\pm_{\beta + \nu},\; \beta>-1
\end{equation}
and that $I^{\nu}$ is a bounded linear operator on both $L_1[-1,1]$ and $C[-1,1]$. The fractional integrals satisfy the semigroup property,
\begin{equation}
\label{semigroupproperty}
I^{\nu_1}_\pm I^{\nu_2}_\pm f= I_\pm^{\nu_1 + \nu_2} f,\; f \in L_1[-1,1],\; \nu_1, \nu_2 >0.
\end{equation}
One obvious observation, which we will use later, is that for $f \in L_1[-1,1]$ such that $f$ is bounded on $[-1, -1+\epsilon)$ for some $\epsilon >0$,
\begin{equation}
\label{Inuat0}
I^{\nu}_+ f (-1) := \lim_{x\downarrow -1}I^{\nu}_+ f (x) = 0, \nu >0.
\end{equation}
Similarly, $I^\nu_-f(1)=0$ if $f$ is bounded near 1.

On the other hand, let $W^{n,1}[-1,1],\; n\in \mathbb{N}$ denote the Sobolev space of functions such that the function along with its (weak) derivatives $D^k$ up to order $n$ belong to $L_1[-1,1]$.  For $\beta > n-1,\; n \in \mathbb{N}$,
\begin{equation}
\label{integerderivativepfunction}
(\pm D)^n p^\pm_\beta = p^\pm_{\beta -n} \; \mathrm{and}\; D p_0 = \bf{0},
\end{equation}
where we highlight that for the right-sided case the formula holds if we use $-D$ instead of $D$ and $p^-_\beta(x)=(1-x)^\beta/\Gamma(\beta+1)$.

Note that for $f \in W^{1,1}[-1,1]$ and all $\nu>0$,
\begin{equation}
\label{derivativeconvolution}
D(I^{\nu}_\pm f) =  I^{\nu}_\pm (Df) + f(\mp 1)p^\pm_{\nu -1}.
\end{equation}
Hence there are two common definitions for fractional derivative operators depending on whether one fractionally integrates first and then differentiates (Riemann-Liouville) or vice-versa (Caputo). It turns out that for  fractional derivatives of order greater than one the Caputo operator with Dirichlet boundary condition is not positive and hence unsuitable for modelling stochastic processes. However, a mixed-type fractional derivative operator is still dissipative and positive.

We investigate two types of fractional derivative operators: The one-sided \emph{ mixed Caputo} and the \emph{Riemann-Liouville} fractional derivative operators of order $1<\alpha\le 2$. 
\begin{definition}
For $1<\alpha\le 2$, the \emph{mixed Caputo} and the \emph{Riemann-Liouville} fractional derivatives (of orders $\alpha$ and $\alpha -1$) are given respectively by
\begin{equation}
\label{Dalphac}
\partial_\pm^{\alpha} f:=D I_\pm^{2-\alpha}  D f , \; \partial_\pm^{\alpha-1} f:=I_\pm^{2-\alpha} Df\end{equation} and \begin{equation}\label{Dalpha}  \quad D_\pm^{\alpha} f:= D^2 I_\pm^{2-\alpha} f , \; D_\pm^{\alpha-1} f:= D I_\pm^{2-\alpha} f.
\end{equation}
\end{definition}

Note that by \eqref{derivativeconvolution}, if $f(\mp 1)=0$, then $\partial^{\alpha}_\pm f=D^\alpha_\pm f$. Furthermore,
for $1 <\alpha\le 2$, using \eqref{integerderivativepfunction}, \eqref{Ialphapfunction} and \eqref{Dalpha}, we have $\partial_\pm^{\alpha} p^\pm_\alpha = p_0 = D_\pm^\alpha p^\pm_\alpha$ and
\begin{equation}
\label{specialpDalpha}
 \partial_\pm^{\alpha} p^\pm_{\alpha -1} =\mathbf {0} = D_\pm^\alpha p^\pm_{\alpha -1}, \; \partial_\pm^{\alpha} p_0 = \mathbf{0} \; \mathrm{and}\; D_\pm^{\alpha} p^\pm_{\alpha -2}= \mathbf{0},
\end{equation} 
resulting in two degrees of freedom when inverting the fractional derivative operator. The degrees of freedom are then used up by the respective boundary conditions which are encoded in the domain of the operator.

To establish the connection between fractional differential equations with various boundary conditions and various modified stable processes we use the following theorem. 
\begin{theorem}\label{thm:fc}
A family of Feller semigroups converge strongly, uniformly for $t \in [0,t_0]$, to a Feller semigroup if and only if their respective Feller processes converge in the Skorokhod topology.
\end{theorem}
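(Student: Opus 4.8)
The plan is to realise this equivalence as a packaging of two classical results: the Trotter--Kato approximation theorem \cite[p.~209]{Engel2000} on the semigroup side and the weak-convergence theory for Feller processes \cite[p.~331, Theorem 17.25]{Kallenberg1997} on the probabilistic side. Since the domain $\Omega$ is a bounded interval, its closure (or, for killed processes, its one-point compactification) is a compact metric space on which the Feller processes take values in the Skorokhod space $D([0,\infty),\overline\Omega)$. I would record this compactness at the outset, as it is precisely what makes the two sides interchangeable.

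For the implication from semigroups to processes, assume $S_n(t)\to S(t)$ strongly and uniformly for $t\in[0,t_0]$. Reading off \eqref{backward}, strong convergence is the statement $\mathbb{E}[f(X^n_t)\mid X^n_0=x]\to \mathbb{E}[f(X_t)\mid X_0=x]$ in $C_0(\Omega)$, and the Chapman--Kolmogorov/Markov structure propagates this to convergence of all finite-dimensional distributions. Tightness of $(X^n)$ in the Skorokhod topology I would obtain from the Feller property together with compactness of the state space, via a standard modulus-of-continuity (Aldous) criterion; finite-dimensional convergence plus tightness then yields convergence in the Skorokhod topology. This is exactly the mechanism exploited in \cite[Theorem 17.25]{Kallenberg1997}.

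For the converse I would use the process convergence in the form of \cite[Theorem 17.25]{Kallenberg1997}, namely that $X^n\to X$ in the Skorokhod topology whenever the initial laws converge weakly. Specialising to point masses $\delta_{x_n}\to\delta_x$ and reading \eqref{backward} along the converging starting points yields the \emph{continuous} convergence $S_n(t)f(x_n)\to S(t)f(x)$ for every $x_n\to x$, each fixed $t$, and $f\in C_0(\Omega)$. On the compact state space, continuous convergence to a continuous limit coincides with uniform convergence, whence $\|S_n(t)f-S(t)f\|_\infty\to0$ for each $t$. Because the $S_n(t)$ are contractions, Bochner dominated convergence then gives $\|R(\lambda,A_n)f-R(\lambda,A)f\|_\infty\to0$ for all $f$, and the Trotter--Kato theorem converts this resolvent convergence into strong semigroup convergence that is automatically uniform for $t\in[0,t_0]$.

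The step I expect to be the genuine obstacle is this upgrade in the reverse direction from the pointwise information supplied by weak convergence of the marginals to norm convergence in $C_0(\Omega)$: weak convergence of $X^n_t$ from a single starting point only controls $S_n(t)f$ at that one point. The decisive device is that the Feller process convergence must be formulated so as to be stable under converging starting points $x_n\to x$, for only then does one obtain continuous convergence, which on the compact interval $\overline\Omega$ is equivalent to uniform convergence. Verifying that the hypothesis of Trotter--Kato, namely strong resolvent convergence, rather than a merely pointwise statement, is in force is where the argument must be handled with care.
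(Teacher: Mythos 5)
The paper's own proof of Theorem \ref{thm:fc} is a one-line citation to \cite[p.~331, Theorem 17.25]{Kallenberg1997}, which is verbatim this equivalence; your proposal instead reconstructs the internals of that theorem. Your overall architecture matches the standard proof (finite-dimensional convergence plus tightness in one direction; continuous convergence upgraded to uniform convergence on the compact state space and then resolvent convergence plus Trotter--Kato in the other), and the converse direction as you describe it --- including the correct identification of the delicate point, namely that Skorokhod convergence must be taken in the form that is stable under converging starting points $x_n\to x$ --- is sound.

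The one step that would fail as literally written is the tightness claim in the forward direction: the Feller property together with compactness of the state space does \emph{not} by itself imply tightness of the sequence $(X^n)$ in $D([0,\infty),\overline{\Omega})$. For instance, deterministic rotations of a circle at speed $n$ are Feller processes on a compact space whose Skorokhod modulus of continuity does not tend to zero uniformly in $n$. Tightness has to be extracted from the convergence hypothesis itself: one takes $g\in C_0(\Omega)$, sets $f_n=R(\lambda,A_n)g$ so that $f_n\to f=R(\lambda,A)g$ by the assumed semigroup convergence, observes that $f_n(X^n_t)-\int_0^t A_nf_n(X^n_s)\,ds$ is a martingale with compensator bounded uniformly in $n$, and only then applies the Aldous criterion. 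With that repair your sketch coincides with the proof given in \cite{Kallenberg1997}, so the proposal is essentially a correct, if compressed, rederivation of the result that the paper simply cites.
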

\begin{proof} See \cite[p. 331, Theorem 17.25]{Kallenberg1997}.
\end{proof}

\section{The main results}

\subsection{Boundary conditions for fractional derivative operators}
As the main contribution of this paper we identify the stochastic processes and show well-posedness of the fractional PDEs for almost any combination of the following homogeneous boundary conditions:
\begin{itemize}
\item {\bf Dirichlet:} Zero boundary condition; i.e., the function is zero at the boundary.  
\item {\bf Neumann:} The $\alpha-1$ derivative is zero at the boundary; i.e., $\partial^{\alpha-1}f$ or $D^{\alpha-1}f$ is zero at the boundary depending on whether the operator is a mixed Caputo or Riemann Liouville operator respectively. 
\item {\bf Neumann*:} The first derivative is zero at the boundary.
\end{itemize}
For $\mathrm{L},\mathrm{R}\in\{\mathrm{D},\mathrm{N},\mathrm{N^*}\}$ we refer to $\mathrm{LR}$ as the set of continuously differentiable functions on $(-1,1)$ that satisfy the respective boundary conditions; e.g., $$\mathrm{DN^*}=\{f:\lim_{x\to -1}f(x)=0,\lim_{x\to 1}f'(x)=0\},$$ with the other combinations of boundary conditions defined analogously. If we want to comprehensively talk about several operators with an unspecified left or right boundary, we just state $\mathrm{L}$ or $\mathrm{R}$; e.g., $\mathrm{LD}$ would refer to having a left boundary condition of type $\mathrm{D},\mathrm{N},$ or $\mathrm{N^*}$ and a right Dirichlet condition. 

We use the notation
$C_0(\Omega)$  to be the closure with respect to the supremum norm of the space of continuous functions with compact support in $\Omega$. The set $\Omega$ refers to the interval $$\Omega=[(-1,1)]$$ with an endpoint excluded if the problem has a Dirichlet boundary condition there (if the endpoint is excluded, any function in $C_0(\Omega)$ will converge to zero there).

We will only consider the twelve cases shown in Table  \ref{explicitdomains}. 
We will show that in those twelve cases the operators $A$ are closed, densely defined,  the range of $I-A$ is dense, and are dissipative. Therefore, they are generators of strongly continuous contraction semigroups (which also turn out to be positive) and hence the associated Cauchy problems are well-posed. 

All other cases are either given by symmetry (such as $(\partial_-^\alpha,\BC)$ for $L_1[-1,1]$), are equivalent (such as $(\partial_+^\alpha,\mathrm{DR})$ and $(D_+^\alpha,\mathrm{DR})$, or  $(\partial_+^\alpha,\mathrm{NR})$ and the closure of $(\partial_+^\alpha,\mathrm{N^*R})$), or do not generate a contraction semigroup on the respective space.
We will not pursue these other cases any further in this manuscript. 

%
%
\begin{table}
\centering
\vline
\begin{tabular}{l|l|}
  \hline
  \multicolumn{2}{c}{$\mathcal D(A,\BC)=\left\{f\in X\cap \BC:f=I^\alpha g+ap_\alpha+bp_{\alpha-1}+cp_\eta,g\in X, g+ap_0\in X\right\}$}\vline\\ \hline

  \multicolumn{2}{c}{$X = L_1[-1,1]$}  
	\vline \\
	\hline
1.	 & $\D (\partial^\alpha_+, \mathrm{DD}) = \left\{f\in X:\;f=I^\alpha_+ g- \frac{I^\alpha_+ g (1)}{p^+_{\alpha-1}(1)}p^+_{\alpha-1},\;g\in X\right\}$\\
  \hline
2.	 & $\D (\partial^\alpha_+, \mathrm{DN})=\left\{f\in X:\;f=I^\alpha_+ g- [I_+^1 g (1)]p^+_{\alpha-1},\;g\in X\right\}$\\
  \hline
3.	& $\D (\partial^\alpha_+, \mathrm{ND})=\left\{f\in X:\;f=I_+^\alpha g -[I_+^\alpha g(1)]p_0,\;g\in X\right\}$\\
  \hline
4.	 & $\D (\partial^\alpha_+, \mathrm{NN})=\left\{f\in X:\;f=I_+^\alpha g-\frac{I_+^1g(1)}{p_1^+(1)} p^+_{\alpha}+	cp_0, \;g\in X , c\in\R\right\}$\\
  \hline
5.	 & $\D (D^\alpha_+, \mathrm{ND})=\left\{f\in X:\;f=I^\alpha_+ g - \frac {I^\alpha_+ g(1)}{p^+_{\alpha-1}(1)} p^+_{\alpha -2},	\;g\in X \right\}$\\
  \hline
6.	 & $\D (D^\alpha_+, \mathrm{NN})=\left\{f\in X:\;f=I^\alpha_+ g-\frac{I_+^1g(1)}{p^+_1(1)} p^+_\alpha+	cp^+_{\alpha -2},	\;g\in X , c\in\R\right\}$\\
  \hline
  \multicolumn{2}{c}{$X = C_0(\Omega)$}  
	\vline \\
	\hline
1.	 & $\D (\partial^\alpha_-, \mathrm{DD}) = \left\{f\in X:\;f=I^\alpha_- g-\frac{I^\alpha_- g (-1)}{p^-_{\alpha-1}(-1)}p^-_{\alpha-1},\;g\in X\right\}$\\
  \hline
2.	 & $\D (\partial^\alpha_-, \mathrm{DN})=\left\{f\in X:\;f=I_-^\alpha g- [I^\alpha_- g (-1)]p_0,\;g\in X\right\}$\\
  \hline
3.	 & $\D (\partial^\alpha_-, \mathrm{ND})=\left\{f\in X:\;f=I_-^\alpha g -[I^1_- g(-1)]p^-_{\alpha -1}, \;g\in X\right\}$\\
  \hline
4.	 & $\D (\partial^\alpha_-, \mathrm{NN})=\left\{f\in X:\;f=I_-^\alpha g-\frac{I^1_-g(-1)}{p^-_1(-1)}p^-_\alpha +c p_0, \;g\in X, c\in\R  \right\}$\\
  \hline
5.	 & $\D (\partial^\alpha_-, \mathrm{N^*D})=\left\{f\in X:\;f=I^\alpha_- g- \frac{I_-^{\alpha-1} g(-1)}{p_{\alpha-2}^-(-1)} p^-_{\alpha -1},	\;g\in X \right\}$\\
  \hline
6.	& $\D (\partial^\alpha_-, \mathrm{N^*N})=\left\{f\in X:\;f=I^\alpha_- g-\frac{I_-^{\alpha-1}g(-1)}{p^-_{\alpha-1}(-1)} p^-_\alpha+	cp_0,	\;g\in X, c\in\R \right\}$\\
  \hline
\end{tabular}
\caption{\label{explicitdomains}
Explicit domains of left-sided fractional derivative operators $(A^+,\BC)$ on $L_1[-1,1]$ and right-sided $(A^-,\BC)$ on $C_0(\Omega)$. For $f\in \D(A,\BC), Af=g+a$, where $a$ is the coefficient of the $p_\alpha$ term. The domains consist precisely of those functions that satisfy the boundary conditions and are fractionally differentiable in $X$ (first line of table, see Proposition \ref{equivalentdefinitions}) and correspond to the domains of the forward and backward generators of the six processes in Table \ref{explicitProcesses}.}
\end{table}  

\subsection{Connection to stochastic proceses}
For the cases in Table \ref{explicitdomains} we are going to prove the well-posedness of the Feller equation on $C_0(\Omega) $ and the Fokker-Planck equation on $L_1[-1,1]$. Furthermore, we identify the processes $Z_t$ for which
$$P(t)f(x)={\mathbb E}^x[f(Z_t)]$$
and
$$P^*(t)f=\mbox{``Probability density function of }Z_t|Z_0\sim f\mbox{''}$$
 are solutions to the respective differential equations 
 $$u'(t)=Au(t);\;\;\; u(0)=f\in X.$$ Here $\mathbb E^x$ is the expectation conditioned on $Z_0=x$, $Z_0\sim f$ means that $Z_0$ has probability density function $f$, and $A$ is the fractional derivative operator of Table \ref{explicitdomains}.

This is achieved by approximating the generators as well as the processes with `finite state' approximations. These approximations will show that the operators $(A,\BC)$ are dissipative which is a key requirement when using the Lumer-Phillips Theorem to show that the corresponding Cauchy problems are well-posed. The Trotter-Kato Theorem together with Theorem \ref{thm:fc} then yields the characterizations of the associated stochastic processes, as they are approximated in the Skorokhod topology by their finite state counterparts. 

The processes, based on the spectrally positive $\alpha$-stable process $Y_t$,  are given in Table \ref{explicitProcesses}, where we denote for a process $V_t$,
$$ (V_t)^{\mathrm{kill}}=\begin{cases} V_t & t<\inf\{\tau:V_\tau\not\in [-1,1] \},\\ \delta& \mathrm{else,}\end{cases}$$
where $\delta$ is a grave yard point, and $$V_t^{\max}=\sup_{0\le\tau\le t}\{V_\tau,1\}\mbox{ and } V_t^{\min}=\inf_{0\le\tau\le t}\{V_\tau,-1\}.$$  The time forwarded processes  $V_{E_t^l}$ are defined using the processes
$$E_t^l=\max\left\{\tau: \int_0^\tau 1_{\{V_s\ge -1\}}(s)\,ds=t\right\},E_t^r=\max\left\{\tau: \int_0^\tau 1_{\{V_s\le 1\}}(s)\,ds=t\right\}$$ and
$$E_t^{l,r}=\max\left\{\tau: \int_0^\tau 1_{\{-1\le V_s\le 1\}}(s)\,ds=t\right\}.$$

We give the following interpretation of the six processes in Table \ref{explicitProcesses}:
\begin{enumerate}
\item The  process is killed as soon as the process leaves $[-1,1]$. 
\item The  process is killed if it drifts across the left boundary. If it jumps across the right boundary we make a time change and delete the time for which $Y_t$ is to the right of the right boundary (in other words, fast-forward to the time the process is again to the left of the right boundary each time it jumps across the right boundary).
\item Here we make a time change and delete the time for which $Y_t$ is to the left of the left boundary (in other words, fast-forward to the time the process is to the right of the left boundary each time it drifts across the left boundary).  This process is killed if it jumps across the right boundary. 
\item Here we make a time change and delete the time for which $Y_t$ is to the left of the left boundary or the right of the right boundary. 
\item This process is killed if it jumps across the right boundary and reflected at the left boundary.
\item This process is reflected at the left boundary and fast forwarded on the right boundary.
\end{enumerate}
Note that for a jump across the right boundary this time change (which was considered on the halfline in e.g. \cite{Bertoin1992,Patie2017}) is equivalent to restarting the process near the right boundary (stochastically reflecting) due to the fact that the infimum process is continuous. However, for a drift across the left boundary this time change is different from reflecting the process on the left boundary as the maximum process is a pure jump process and therefore the time forwarded process will restart inside the domain. 
\begin{table}
\centering
\vline
\begin{tabular}{l|c|c}
  \hline
\hspace{1cm} Process $Z_t$ &
\hspace{.1cm}	Forward generator 
\hspace{.1cm} & \hspace{.1cm} Backward generator
\hspace{.1cm}\\
	\hline
	1.  $Z_t=Y^{\mathrm{kill}}_t$& $(\partial_+^\alpha, \mathrm{DD})$  & $(\partial_-^\alpha, \mathrm{DD})$   \\
  \hline
2.	$Z_t=\left(Y_{E^r_t}\right)^{\mathrm{kill}}$ & $(\partial_+^\alpha, \mathrm{DN})$ & $(\partial_-^\alpha, \mathrm{DN})$\\
  \hline
3.	 $Z_t=\left(Y_{E^l_t}\right)^{\mathrm{kill}}$& $(\partial_+^\alpha, \mathrm{ND})$ &$(\partial_-^\alpha, \mathrm{ND})$\\
  \hline
4.	$Z_t=Y_{E^{l,r}_t}$ & $(\partial_+^\alpha, \mathrm{NN})$ & $(\partial_-^\alpha, \mathrm{NN})$\\
  \hline
5.	$Z_t=(Y_t-Y^{\min}_t-1)^{\mathrm{kill}}$ &$(D_+^\alpha, \mathrm{ND})$ & $(\partial_-^\alpha, \mathrm{N^*D})$\\
  \hline
6.	 $Z_t=(Y_t-Y^{\min}_t-1)_{E^r_t}$ & $(D^\alpha_+, \mathrm{NN})$ & $(\partial_-^\alpha, \mathrm{N^*N})$ \\
  \hline 
\end{tabular}\vline
\caption{\label{explicitProcesses}
Stochastic processes related to the spectrally positive $\alpha$-stable process $Y_t$ and the generators of the forward and backwards semigroups $(P^*(t))_{t\ge 0}$ on $L_1[-1,1]$ and $(P(t))_{t\ge 0}$ on $C_0(\Omega)$ respectively.}
\end{table}

\section{ Fractional derivative operators with boundary conditions}

In this section we show that the fractional derivative operators $A$ of Table \ref{explicitdomains} are densely defined, closed, and that $ I-A$ has a dense range for each of the fractional derivative operators $A$. Furthermore we give a core of the domains consisting of fractional polynomials satisfying the boundary conditions.


\subsection{The domain of fractional derivative operators}
It is the domain of the operator that captures the boundary conditions. In order to refer to specific operators we denote with $(A,\mathrm{LR})$ all of the fractional derivative operators (either mixed Caputo or Riemann-Liouville) that have left and right boundary conditions and replace $A,\mathrm{L},\mathrm{R}$ if we need to be more specific. For example,
$(\partial^\alpha_-,\mathrm{N^*R})$ refers to all of the mixed Caputo operators with a Neumann* left boundary condition and any right boundary condition. 

Recall that  $\mathrm{LR}$ refers to the set  continuously differentiable functions on $(-1,1)$ that satisfy the respective boundary conditions.
\begin{definition}
\label{definitionfractionalderivativeoperatoronX}
The pair $(A^\pm , \BC)$ for any combination of $\mathrm{L},\mathrm{R}\in\{\mathrm{D},\mathrm{N},\mathrm{N^*}\}$ boundary conditions is called a fractional derivative operator on $$X\in\{L_1[-1,1], C_0(\Omega)\},$$ if $A^\pm \in \left\{\partial_\pm^{\alpha}, D_\pm^{\alpha} \right\}$, $1< \alpha \le 2$, with domain of the form
\begin{equation}\label{domaincandidatefractionalderivativeoperatoronX}
\D(A^\pm, \BC) = \{ f \in X\cap \BC\;: \; f = I_\pm^{\alpha} g +a p^\pm_\alpha+ b p^\pm_{\alpha-1} + c p^\pm_{\eta},g\in X, g+ap_0\in X \}
\end{equation}
with  $\eta=0$ if $A^\pm=\partial^\alpha_\pm$, and $\eta=\alpha-2$ if $A^\pm=D_\pm^\alpha$. Note that $A^\pm f = g + ap_0$ for all $f$ in the respective domain.

\end{definition}


For the twelve cases of Table \ref{explicitdomains} the boundary conditions completely specify the coefficients $a$, $b$, and $c$.
 
\begin{proposition}
\label{equivalentdefinitions}
Consider the fractional derivative operators $(A, \BC)$ of the first column of Table \ref{explicitdomains}. The domains given in Definition \ref{definitionfractionalderivativeoperatoronX} are equal to the domains given in Table \ref{explicitdomains}.  
\end{proposition}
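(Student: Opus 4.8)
The plan is to reduce each of the twelve claimed identities to a short, explicit computation of how the boundary functionals act on the ingredients of the representation \eqref{domaincandidatefractionalderivativeoperatoronX}, and then to solve a small (in fact triangular) linear system for the coefficients $a,b,c$. I would exploit the reflection symmetry $x\mapsto -x$ of Section \ref{sectionpreliminariesfract} to carry out every computation for the left-sided operators only, the right-sided $C_0(\Omega)$ rows following after reflection. The one genuine bifurcation to track is the ambient space, $L_1[-1,1]$ versus $C_0(\Omega)$, which enters solely through the joint membership requirement $g\in X$ and $g+ap_0\in X$ appearing in \eqref{domaincandidatefractionalderivativeoperatoronX}.

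First I would tabulate, once and for all and working left-sided, the action of the relevant functionals on the kernel directions $p^+_\alpha,p^+_{\alpha-1},p^+_\eta$ and on the particular part $I^\alpha_+ g$. From \eqref{semigroupproperty} together with $D I^\nu_+=I^{\nu-1}_+$ one obtains $A(I^\alpha_+ g)=g$, $\partial^{\alpha-1}_+(I^\alpha_+ g)=I^1_+ g=D^{\alpha-1}_+(I^\alpha_+ g)$, and $D(I^\alpha_+ g)=I^{\alpha-1}_+ g$; by \eqref{Inuat0} the latter two vanish as $x\downarrow-1$. On the power functions, \eqref{Ialphapfunction}, \eqref{integerderivativepfunction} and \eqref{specialpDalpha} give $\partial^{\alpha-1}_+ p^+_\alpha=p^+_1$, $\partial^{\alpha-1}_+ p^+_{\alpha-1}=p_0$, $\partial^{\alpha-1}_+ p_0=\mathbf 0$, with the parallel $D^{\alpha-1}_+$ values involving $p^+_{\alpha-2}$, while the point evaluations are $p^+_\beta(-1)=0$ for $\beta>0$, $p_0(-1)=1$, and $p^+_{\alpha-2}(-1)=+\infty$ when $\alpha<2$. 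This last blow-up is precisely what decides which conditions are admissible at the left (near) endpoint and hence which kernel direction each operator is allowed to carry.

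With this table in hand each case becomes a triangular elimination. Because the $I$-terms vanish at the near endpoint and $p^+_\beta$ vanishes there for $\beta>0$, a near-endpoint condition always isolates a single coefficient: a Dirichlet value reads off the $p_0$- or $p_{\alpha-2}$-coefficient, a Neumann value of $\partial^{\alpha-1}$ or $D^{\alpha-1}$ reads off the $p_{\alpha-1}$-coefficient, and the far-endpoint condition then fixes the remaining coefficient as $-I^\alpha_+ g(\text{far})/p^+_\beta(\text{far})$ or $-I^1_+ g(\text{far})/p^+_\beta(\text{far})$ with $p^+_\beta$ the surviving kernel direction. The non-uniqueness of \eqref{domaincandidatefractionalderivativeoperatoronX} --- since $p^+_\alpha=I^\alpha_+ p_0$ lets one trade $a$ against a constant summand of $g$ --- I would dispose of at the start: in $L_1[-1,1]$ the term $ap^+_\alpha$ may be absorbed into $I^\alpha_+ g$, so one takes $a=0$ and recovers the Table rows that carry no $p_\alpha$, whereas in the conservative Neumann/Neumann cases it is cleaner to keep $a$ and read it off from the far boundary. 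In $C_0(\Omega)$ with a Dirichlet endpoint this absorption is unavailable, $p_0\notin C_0(\Omega)$, so the joint requirement $g,\,g+ap_0\in X$ forces $a=0$ outright; this is the structural reason the $C_0$ Dirichlet rows contain no $p_\alpha$ term. Running the elimination through all twelve rows, and then verifying the converse inclusion --- that every function of the explicit Table form admits such a representation, lies in $X$, and meets both boundary conditions --- gives the asserted equalities of Table \ref{explicitdomains} with Definition \ref{definitionfractionalderivativeoperatoronX}.

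I expect the main obstacle to lie not in the algebra but in the rigorous handling of the near-endpoint Neumann-type functionals. One must check that $\partial^{\alpha-1}_+ f$, $D^{\alpha-1}_+ f$, and (for the Neumann* rows) $f'$ genuinely possess finite one-sided limits at the near boundary for every $f$ in the representation, and that the $p^+_{\alpha-2}$ singularity really does exclude the inadmissible combinations while leaving the listed ones finite. This forces one to apply \eqref{Inuat0} carefully to $I^\nu_+ g$ for merely $L_1$ data, whose boundedness near the endpoint is not automatic, and to keep the $L_1$ versus $C_0(\Omega)$ membership bookkeeping synchronized with those limit computations; that synchronization is the only part of the argument that is more than routine.
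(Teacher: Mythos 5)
Your proposal is correct and follows essentially the same route as the paper's proof: verify the forward inclusion directly, then use the vanishing of $I^\nu_\pm g$ at the near endpoint together with the endpoint values of $p^\pm_\alpha,p^\pm_{\alpha-1},p^\pm_\eta$ so that each boundary condition isolates one coefficient, with the $a$-coefficient disposed of exactly as you describe (absorption of $ap_\alpha=I^\alpha(ap_0)$ into $g$ in $L_1$, and the joint requirement $g,\,g+ap_0\in C_0(\Omega)$ at a Dirichlet endpoint forcing $a=0$). Your closing caveat about justifying the near-endpoint limits for merely $L_1$ data is a fair point the paper passes over lightly, but it does not alter the argument.
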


\begin{proof}
A simple calculation reveals that for each of the operators given in Table \ref{explicitdomains}, the domains $\D (A, \BC)$ are a subset of the domains given by \eqref{domaincandidatefractionalderivativeoperatoronX}. For example, for $f\in \D(\partial_-^\alpha,\mathrm{ND})$; i.e. $f=I^\alpha_- g-[I^1_-g](-1)p_{\alpha-1}^-$ for some $g\in C_0([-1,1))$, we have to show that $f\in \mathrm{ND}$. As $f(1)=0$ and $$\partial^{\alpha-1}_-f(-1)= [\partial^{\alpha-1}_-I^\alpha_- g-[I^1_-g](-1)p_{\alpha-1}^-](-1)=I^1_-g(-1)-I^1_-g(-1)=0,$$ that is satisfied. Similarly, $f$ satisfies the boundary conditions in all other cases.

On the other hand, for $f$ given in Definition \ref{definitionfractionalderivativeoperatoronX}; that is, $f = I^{\alpha} g +a p_\alpha+ b p_{\alpha-1} + c p_{\eta}, \; g \in X$ satisfying $\BC$ such that $Af \in X$, we need to show that $f$ belongs to the corresponding domains given explicitly in Table \ref{explicitdomains}. 

First, we are going to show that if there is a Dirichlet boundary condition, then we may specify  \eqref{domaincandidatefractionalderivativeoperatoronX} more explicitly  by setting $a=0$. If $X = L_1[-1,1]$, since $a p_\alpha = I^\alpha (a p_0)$, the term $ap_0$ can just be incorporated into $g \in L_1[-1,1]$ and thus we may set $a =0$ without loss of generality.  On the other hand, if $X=C_0(\Omega)$, then $g\in C_0(\Omega)$ and $A f =g+ap_0 \in C_0(\Omega)$; in particular, if $g$ has to be zero at one of the endpoints due to the Dirichlet condition so does $Af$, and hence $a=0$.    

For $(A^+,\mathrm{DR})$ and $(\partial_-^\alpha,\mathrm{LD})$, we show that this boundary condition implies that $c=0$ in  \eqref{domaincandidatefractionalderivativeoperatoronX}. We have shown that $a=0$ and in case of $\lim_{x \to \mp1}f(x) =0$  we have $0= I^\alpha_\pm g (\mp1)+b p_{\alpha-1}^\pm(\mp1)+ c p_\eta^\pm(\mp1)$. Since $p_{\alpha-1}^\pm(\mp1)=0$ and $I^ \alpha_\pm g (\mp1)=0$, this implies that $c=0$. 

For $(A^+,\mathrm{NR})$ and $(\partial_-^\alpha,\mathrm{LN})$, we show that $b=0$ in  \eqref{domaincandidatefractionalderivativeoperatoronX}. The Neumann boundary condition implies that  $0=I^\pm g(\mp1)+ap_1^\pm(\mp1) + b$. Thus, since the first two terms are zero, it follows that $b=0$. 

For $(A^+,\mathrm{LD})$ and $(\partial_-^\alpha,\mathrm{DR})$, since $\lim_{x \to \pm1} f(x) =0$ and $a=0$, we have $0= I_\pm^\alpha g (\pm1)+b p_{\alpha-1}(\pm1)+ c p_{\eta}(\pm1)$ or $bp^\pm_\alpha(\pm1)+cp^\pm_\eta(\pm1) = - I_\pm^\alpha g (\pm1)$.

For $(A^+, \mathrm{LN})$ and $(\partial_-^\alpha, \mathrm{NR})$,  observe that the Neumann boundary condition implies that  $I^1_\pm g(\pm 1) + a p_1^\pm(\pm1) + b=0$ or $ap_1^\pm(\pm1) + b = - I^1_\pm g(1)$.  

For $(\partial_-^\alpha, \mathrm{N^*R})$, since $Df(-1)=0$ and $ -D f = I_-^{\alpha -1}g+ a p^-_{\alpha -1}+ b p^-_{\alpha -2}$, we have 
$ap^-_{\alpha-1}(-1)+bp^-_{\alpha-2}(-1) = - I_-^{\alpha -1}g (-1)$. 

Hence $a,b$, and $c$ are specified according to Table \ref{explicitdomains} by considering both the boundary conditions simultaneously for each $\BC$.
\end{proof}

\begin{theorem}
\label{theoremAdenseA}
The fractional derivative operators $(A ,\BC )$ given by Proposition \ref{equivalentdefinitions} are densely defined.
\end{theorem}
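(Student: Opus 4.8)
The goal is to prove $\overline{\D(A,\BC)}=X$ for both $X=L_1[-1,1]$ and $X=C_0(\Omega)$, and my plan is to exhibit, inside each of the twelve domains, a subspace that is already dense in $X$. Because the two Banach spaces respond very differently to the nonlocality of the operators, I would treat them separately, always using the representation \eqref{domaincandidatefractionalderivativeoperatoronX} (which by Proposition \ref{equivalentdefinitions} agrees with the explicit domains of Table \ref{explicitdomains}).

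For the six $L_1[-1,1]$ operators the statement is almost immediate: I claim $C_c^\infty(-1,1)\subseteq\D(A^+,\BC)$ for every boundary combination. A test function $\phi$ vanishes identically near both endpoints, so it satisfies each of the D, N and N$^*$ conditions trivially; moreover, since $\phi$ vanishes near $-1$, the remark following the Definition gives $\partial^\alpha_+\phi=D^\alpha_+\phi$ and the exact inversion $\phi=I^\alpha_+(A^+\phi)$, so $\phi$ has the representation \eqref{domaincandidatefractionalderivativeoperatoronX} with $a=b=c=0$ and $g=A^+\phi$. The only point to verify is $g\in L_1[-1,1]$; here a short computation using \eqref{derivativeconvolution} shows $A^+\phi=I^{2-\alpha}_+ D^2\phi$ is continuous on $[-1,1]$, hence bounded, hence in $L_1$, and no vanishing of $g$ at the endpoints is needed in $L_1$. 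As $C_c^\infty(-1,1)$ is dense in $L_1[-1,1]$, density follows for all six cases at once.

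For the $C_0(\Omega)$ operators the same idea fails, and this is the main obstacle. The difficulty is exactly the nonlocality: for $\phi\in C_c^\infty(-1,1)$ one still has $\phi=I^\alpha_-(A^-\phi)$, but $A^-\phi=\partial^\alpha_-\phi$ reaches to the right and need not vanish at the \emph{left} endpoint $-1$, so $A^-\phi\notin C_0(\Omega)$ whenever $-1$ is excluded and $\phi$ drops out of the domain. I would therefore build approximants from the representation $f=I^\alpha_- g+(\text{power-function corrections})$ with $g\in C_0(\Omega)$ imposed, so that $A^-f=g\in C_0(\Omega)$ holds by construction. Given a target $h$ in a suitable dense family, set $g_0:=\partial^\alpha_- h=D^\alpha_- h$, so $I^\alpha_- g_0=h$ exactly, and then replace $g_0$ by a continuous cutoff $g=\chi_\delta g_0\in C_0(\Omega)$ forced to vanish at any excluded (Dirichlet) endpoint. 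The key quantitative input is that $I^\alpha_-$ applied to a bounded function supported in a $\delta$-neighbourhood of an endpoint has supremum norm $O(\delta^\alpha)$; hence $\|I^\alpha_-(g_0-g)\|_\infty\to0$, and the boundary-correction coefficient (e.g. $I^\alpha_- g(-1)/p^-_{\alpha-1}(-1)$ in the DD case) also tends to $0$ because $I^\alpha_- g_0(-1)=h(-1)=0$. This yields domain elements converging to $h$ in $C_0(\Omega)$.

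It remains to choose the dense family according to the boundary type. Where an endpoint carries a Dirichlet condition it is excluded from $\Omega$, so $g$ must vanish there and the cutoff above is exactly what is needed; the relevant dense family is $C_c^\infty(-1,1)$ (or smooth functions vanishing near the excluded endpoint). Where an endpoint carries a Neumann or Neumann$^*$ condition it belongs to $\Omega$ and the targets need not vanish there; since $I^\alpha_- g$ always vanishes at the right endpoint, such boundary values can only be supplied by the correction power functions (the constant $p_0$, or $p^-_{\alpha-1}$), so I would first match the boundary value with these corrections and then approximate the remainder, which now vanishes at the excluded endpoints, as above. The crux throughout is the $C_0(\Omega)$ tension between forcing $A^-f$ to vanish at excluded endpoints and approximating an arbitrary target; the smallness estimate for $I^\alpha_-$ on short intervals, together with the correction terms carried by the domain, is what resolves it, whereas the $L_1$ cases are entirely free of this obstruction.
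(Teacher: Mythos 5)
Your $L_1[-1,1]$ argument has a genuine gap: the claim that $C_c^\infty(-1,1)\subseteq\D(A^+,\BC)$ for every boundary combination is false whenever the right boundary carries a Neumann condition, i.e.\ for $(\partial_+^\alpha,\mathrm{DN})$, $(\partial_+^\alpha,\mathrm{NN})$ and $(D_+^\alpha,\mathrm{NN})$. The Neumann condition is itself nonlocal: it demands $\partial_+^{\alpha-1}f(1)=I_+^{2-\alpha}Df(1)=0$, a quantity that sees all of $f$ and is not annihilated by $f$ vanishing near $x=1$. Indeed, for $1<\alpha<2$ and a nonnegative bump $\phi\in C_c^\infty(-1,1)$ an integration by parts gives $\partial_+^{\alpha-1}\phi(1)=\frac{1-\alpha}{\Gamma(2-\alpha)}\int_{-1}^{1}(1-s)^{-\alpha}\phi(s)\,\dd s\neq 0$. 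Equivalently, your representation $\phi=I_+^\alpha(\partial_+^\alpha\phi)$ is the unique one of the form \eqref{domaincandidatefractionalderivativeoperatoronX} and has $a=b=c=0$, whereas the $\mathrm{DN}$ row of Table \ref{explicitdomains} forces $b=-I_+^1g(1)=-\partial_+^{\alpha-1}\phi(1)$; these are incompatible unless the latter vanishes. This is precisely why the paper's proof introduces the corrector $h_\epsilon$, supported in $(1-\epsilon,1]$ with prescribed values of $\partial^\alpha h_\epsilon(1)$, $\partial^{\alpha-1}h_\epsilon(1)$ and $Dh_\epsilon(1)$, \emph{even in the $L_1$ case}: one approximates $\phi$ by $\phi+C(\epsilon)h_\epsilon$ so as to restore the nonlocal right boundary condition while keeping $C(\epsilon)h_\epsilon\to 0$. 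As written, your argument covers only the three $L_1$ cases with a right Dirichlet condition.

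On $C_0(\Omega)$ your cutoff construction does settle $(\partial_-^\alpha,\mathrm{DD})$ --- the $O(\delta^\alpha)$ bound for $I_-^\alpha$ of a bounded function supported on an interval of length $\delta$ is the correct quantitative input --- but the Neumann/Neumann$^*$ cases remain open as written. You propose to ``first match the boundary value with the correction power functions,'' yet in $\mathrm{ND}$ and $\mathrm{N^*D}$ (and for the $p^-_\alpha$ term in $\mathrm{NN}$, $\mathrm{N^*N}$) the coefficients of those corrections are determined by $g$ through Table \ref{explicitdomains} rather than being free parameters, so matching the target's value at a non-excluded endpoint cannot be decoupled from the choice of $g$. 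The paper resolves this by subtracting the fixed element $\frac{f(-1)-f(1)}{p^-_{\alpha+1}(-1)}p^-_{\alpha+1}+f(1)p_0$ (note $p^-_{\alpha+1}=I_-^\alpha p_1^-$, so it is of the required form up to an $h_\epsilon$-type correction), reducing to a target in $C_0(-1,1)$, and then reusing the smooth approximation; you would need an explicit device of this kind to close those cases.
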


\begin{proof}
Given $\epsilon >0$ and $\phi \in X$, for each $(A, \BC)$ we need to show that there exists $f_\epsilon \in \D(A, \BC )$ such that $\left\|\phi - f_\epsilon\right\|_X < \epsilon$. 
We are first going to show that we can approximate every $\phi\in C_0^\infty(-1,1)$ function by elements in the domain.

Let $0<\epsilon<1$ and define \begin{equation*}
h_\epsilon(x) = \begin{cases}
0, &  -1\leq x \leq 1-\epsilon,\\
\epsilon p_{\alpha+1}^+(x-2+\epsilon)- (\alpha+2)p_{\alpha+2}^+(x-2+\epsilon), & 1- \epsilon < x \leq 1
\end{cases}
\end{equation*} 
and let $h_\epsilon^\pm(x)=h_\epsilon(\pm x)$. Consider
$$f^\pm_\epsilon=\phi+C(\epsilon)h_\epsilon^\pm$$ with '$+$' if $X=L_1[-1,1]$ and '$-$' if $X=C_0(\Omega)$ and $C(\epsilon)$ yet to be determined. As $h_\epsilon(1)=0$, $f^\pm_\epsilon\in X$ for any $C(\epsilon)$.  We have to show that for each case there exist $C(\epsilon)$ such that $f^\pm_\epsilon$ is in the domain and that $C(\epsilon)h_\epsilon^\pm\to0$. 

Observe that $f_\epsilon^+$  satisfies any left boundary condition and $f_\epsilon^-$ any right boundary condition as $f_\epsilon^\pm$ is zero in a neighbourhood of the respective boundary. Furthermore, as $f_\epsilon(-1)=0$, the mixed Caputo derivative is equal to the Riemann-Liouville derivative and
$$\partial^\alpha h_\epsilon(1)=-\frac{\alpha}{2}\epsilon^2, \partial^{\alpha-1}h_\epsilon(1)=-\frac{\alpha-1}{6}\epsilon^3, Dh_\epsilon(1)=-\frac1{\Gamma(\alpha+2)}\epsilon^{\alpha+1}.$$
This allows us to set the relevant boundary value of $f_\epsilon$ to zero. For example, to satisfy a right Dirichlet condition for the mixed Caputo operator we set $$C(\epsilon)=-\frac{\partial^\alpha_+\phi(1)}{-\frac{\alpha}{2}\epsilon^2}.$$ Similarly in all the other cases we can find $C(\epsilon)$ such that $f_\epsilon^\pm\in \D(A,\BC)$ and $C(\epsilon)=O(\epsilon^{-3})$. As  $|h_\epsilon(x) |\le \epsilon^{\alpha+2}/\Gamma(\alpha+2)$ for all $x$ we see that in all cases $C(\epsilon)h_\epsilon\to 0$ and hence $f_\epsilon^\pm\to\phi$. 

As $C_0^\infty  (-1,1)$ is dense in $L_1[-1,1]$, the domains of fractional derivative operators are dense in $L_1[-1,1]$. For $f\in C_0(\Omega)$, consider 
$$g=f-\frac{f(-1)-f(1)}{p^-_{\alpha+1}(-1)}p^-_{\alpha+1}-f(1)p_0\in C_0(-1,1).$$
Let $\phi\in C_0^\infty(-1,1)$ be close to $g$.
Similarly to above we can find $C(\epsilon)$ such that $$f_\epsilon=\phi+\frac{f(-1)-f(1)}{p^-_{\alpha+1}(-1)}p^-_{\alpha+1}+f(1)p_0+C(\epsilon)h_\epsilon^-\in \D(A^-,\BC)$$  is close to $f$ and hence the domains of the fractional derivative operators are dense in $C_0(\Omega)$ as well.
\end{proof}

\subsection{The closedness of fractional derivative operators}

\begin{proposition}
\label{propositionAinvertible}
The operators $(A , \BC)$ with at least one Dirichlet boundary condition; i.e., $\BC \in \{ \mathrm{DR}, \; \mathrm{LD}\}$, are (boundedly) invertible.
\end{proposition}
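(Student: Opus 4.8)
The plan is to construct an explicit bounded two-sided inverse rather than to estimate resolvents. The structural fact I would lean on, established inside the proof of Proposition~\ref{equivalentdefinitions}, is that the presence of a Dirichlet condition forces the coefficient $a$ of $p_\alpha^\pm$ to vanish. Consequently, for every $f$ in such a domain the operator acts cleanly as $Af=g$ with no $p_0$-correction, and the explicit parametrisation $g\mapsto f$ recorded in the corresponding row of Table~\ref{explicitdomains} is precisely the candidate inverse I would verify.

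First I would record the identity $A I_+^\alpha g = g$ for every $g\in X$ (the right-sided statement $A I_-^\alpha g = g$ then follows from the reflection symmetry $x\mapsto -x$ of Section~\ref{sectionpreliminariesfract}). For the Riemann--Liouville operator this is immediate from the semigroup property \eqref{semigroupproperty}, since $D_+^\alpha I_+^\alpha g = D^2 I_+^{2-\alpha}I_+^\alpha g = D^2 I_+^2 g = g$. For the mixed Caputo operator I would write $I_+^\alpha = I_+^1 I_+^{\alpha-1}$, so that $D I_+^\alpha g = I_+^{\alpha-1}g$ (the order-one integral differentiates to its integrand, with no boundary contribution), and then \eqref{semigroupproperty} gives $\partial_+^\alpha I_+^\alpha g = D I_+^{2-\alpha} I_+^{\alpha-1} g = D I_+^1 g = g$. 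Combined with the annihilation identities \eqref{specialpDalpha}, which give $A p_{\alpha-1}^\pm = A p_\eta^\pm = \mathbf 0$, this shows that $A$ returns $g$ when applied to any domain element $f = I_\pm^\alpha g + b\,p_{\alpha-1}^\pm + c\,p_\eta^\pm$.

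Next I would define $B\colon X\to X$ by the formula in the relevant row of Table~\ref{explicitdomains} with the free function $g$ replaced by the argument $\phi$; for instance $B\phi = I_+^\alpha\phi - \frac{I_+^\alpha\phi(1)}{p_{\alpha-1}^+(1)}\,p_{\alpha-1}^+$ for $(\partial_+^\alpha,\mathrm{DD})$. By Proposition~\ref{equivalentdefinitions} the element $B\phi$ lies in $\D(A,\BC)$, and by the previous step $AB\phi=\phi$, so $B$ is a right inverse. Since the table parametrises the domain bijectively through $g$, and this parameter is exactly $g=Af$, the same map satisfies $B(Af)=f$ for $f\in\D(A,\BC)$; hence $B$ is a genuine two-sided inverse and $A$ is a bijection. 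This is also where the hypothesis is used: without a Dirichlet condition the constant $p_0$ both satisfies the Neumann--Neumann conditions and is annihilated by $A$, so injectivity would fail and invertibility is not expected.

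The one point demanding genuine care---and the step I expect to be the main obstacle---is the boundedness of $B$. Boundedness of $\phi\mapsto I_\pm^\alpha\phi$ on both $L_1[-1,1]$ and $C_0(\Omega)$ is already available, so it suffices to check that each scalar coefficient occurring in the rows of Table~\ref{explicitdomains}, namely $I_+^\alpha\phi(1)$, $I_+^1\phi(1)$, $I_-^\alpha\phi(-1)$, $I_-^1\phi(-1)$ and $I_-^{\alpha-1}\phi(-1)$, is a bounded linear functional on the space at hand. Each is the integral of $\phi$ against a power kernel: on $L_1[-1,1]$ the kernels $(1-s)^{\alpha-1}/\Gamma(\alpha)$ and $1$ are bounded, while on $C_0(\Omega)$ the only potentially troublesome kernel, $(1+s)^{\alpha-2}/\Gamma(\alpha-1)$ coming from $I_-^{\alpha-1}$, is integrable on $[-1,1]$ because $\alpha-2>-1$; in every case the functional is bounded. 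As the normalising denominators $p_{\alpha-1}^\pm(\pm1)$ and the like are nonzero constants, the coefficients are bounded functionals of $\phi$, and hence $B=A^{-1}$ is a bounded operator.
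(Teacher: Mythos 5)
Your proposal is correct and follows essentially the same route as the paper: both define the candidate inverse $B$ explicitly from the rows of Table~\ref{explicitdomains}, verify $ABg=g$ via $A I_\pm^\alpha g=g$ together with the annihilation identities \eqref{specialpDalpha}, and obtain boundedness from the boundedness of $I_\pm^\alpha$ and of the scalar coefficient functionals. The only (minor) difference is that you deduce $BAf=f$ from the uniqueness of the representation $f=I_\pm^\alpha g+b\,p_{\alpha-1}^\pm+c\,p_\eta^\pm$, whereas the paper verifies it by a direct computation of $B\partial_\pm^\alpha f$ and $BD_+^\alpha f$ using \eqref{derivativeconvolution}; both are sound.
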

\begin{proof}
For each $(A , \BC)$ on $X$ with $\BC \in \{ \mathrm{DR}, \; \mathrm{LD}\}$, we show that there is a bounded operator $B$ on $X$ such that $BAf = f$ for all $f \in \D(A, \BC)$, and for all $ g \in X$, $Bg \in \D(A, \BC)$ and $ABg=g$. 
Consider the operators $B: X \to X$ given by $B g = I^\alpha g + b p_{\alpha -1} + c p_\eta,\; g \in X$
with $\eta$ as in Definition \ref{definitionfractionalderivativeoperatoronX}. For each $(A, \BC)$  we take the coefficients $b$ and $c$ from Table \ref{explicitdomains}; they depend continuously on $g$ in $X$. Then, it is clear that $B g \in \D(A, \BC)$ in view of Proposition \ref{equivalentdefinitions}, and since $I^\alpha$ is a bounded linear operator on $X$, so is $B$. 

By definition,  $A B g =A \left(I^\alpha g + b p_{\alpha -1} + c p_\eta \right) = g$.
To complete the proof, using \eqref{derivativeconvolution} and \eqref{specialpDalpha}, we know that 
$$B \partial_\pm^\alpha f = f +\left(b-\partial_\pm^{\alpha -1}f(\mp1)\right) p^\pm_{\alpha -1}+ \left(c-f(\mp1)\right) p_0$$ and 
$$B D_+^\alpha f = f + \left(b - D_+^{\alpha -1}f(-1)\right) p^+_{\alpha -1}+ \left(c - I_+^{2-\alpha}f(-1)\right)p^+_{\alpha -2}.$$
Hence, using the conditions on $b,c$ from Table \ref{explicitdomains}, we see that $B A f =f$ for all $f \in \mathcal{D}(A, \BC)$ with $\BC \in \{\mathrm{DR},\; \mathrm{LD}\}$, and therefore 
the operators $(A, \BC)$ with at least one Dirichlet boundary condition are invertible.
 \end{proof}

\begin{theorem}
\label{theoremClosedA}
The fractional derivative operators $(A ,\BC )$ given by Proposition \ref{equivalentdefinitions} are  closed  operators on $X$.
\end{theorem}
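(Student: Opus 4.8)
The plan is to split into the two structural cases determined by the presence of a Dirichlet boundary condition, since Proposition \ref{propositionAinvertible} already hands us bounded invertibility in one of them. First I would dispatch the cases with at least one Dirichlet condition, i.e. $\BC\in\{\mathrm{DR},\mathrm{LD}\}$. For these, Proposition \ref{propositionAinvertible} gives a bounded operator $B$ on $X$ with $BAf=f$ on $\D(A,\BC)$ and $ABg=g$ for all $g\in X$; thus $A$ is a bijection from $\D(A,\BC)$ onto $X$ with bounded inverse $A^{-1}=B$. A bounded operator is closed, and the inverse of a closed operator is closed, so $A=B^{-1}$ is closed. Concretely: if $f_m\to f$ in $X$ and $Af_m\to h$ in $X$ with $f_m\in\D(A,\BC)$, then applying the bounded $B$ gives $f_m=BAf_m\to Bh$, so $f=Bh\in\D(A,\BC)$ (since $B$ maps into the domain) and $Af=ABh=h$. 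This is the clean, essentially free part.

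The substantive work is the purely Neumann cases, namely $(\partial^\alpha_+,\mathrm{NN})$ and $(D^\alpha_+,\mathrm{NN})$ on $L_1$, and $(\partial^\alpha_-,\mathrm{NN})$ and $(\partial^\alpha_-,\mathrm{N^*N})$ on $C_0(\Omega)$, where there is no Dirichlet condition and hence (by Proposition \ref{propositionAinvertible}) no bounded inverse to lean on — indeed these operators have $p_0$ (a constant) in their kernel. Here I would verify closedness directly from the explicit domain description. Take $f_m\in\D(A,\mathrm{NN})$ with $f_m\to f$ and $Af_m\to h$ in $X$. Writing each $f_m$ in the canonical form from Table \ref{explicitdomains}, e.g. for $(\partial^\alpha_+,\mathrm{NN})$, $f_m=I^\alpha_+ g_m-\frac{I^1_+g_m(1)}{p^+_1(1)}p^+_\alpha+c_m p_0$ with $Af_m=g_m+a_m p_0$ where $a_m$ is the $p_\alpha$-coefficient, the goal is to show $g_m$ converges in $X$ and the scalar coefficients ($a_m$, $c_m$, and the $I^1_+g_m(1)$-type boundary functional) converge, so that the limit $f$ inherits the same representation and lands in $\D(A,\mathrm{NN})$ with $Af=h$.

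The hard part will be controlling these finitely many scalar degrees of freedom from convergence of $f_m$ and $Af_m$ in the ambient norm alone. The key is to exploit the structure $Af_m=g_m+a_m p_0$ together with the boundary functionals: since $I^\alpha_+$ is bounded (eq. \eqref{Ialphapfunction} and surrounding text) and $p_\alpha,p_{\alpha-1},p_0$ are linearly independent, I would extract the coefficients by evaluating suitable bounded linear functionals (boundary traces such as $f\mapsto I^1_+ g(1)$, which are continuous on the relevant subspace, and the Neumann trace encoding the boundary condition) and show they form Cauchy sequences; the convergence of $g_m$ then follows because $g_m=Af_m-a_m p_0$ once $a_m$ converges, and $f=I^\alpha_+(\lim g_m)+\cdots$ by boundedness of $I^\alpha_+$. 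The main subtlety is that the coefficient $a$ (the $p_\alpha$-term, which equals the part of $Af$ proportional to $p_0$) is constrained by the requirement $g+ap_0\in X$, so one must check these limiting coefficients remain consistent with membership in $X$ — this is where the two Banach-space cases $L_1$ and $C_0(\Omega)$ may need slightly different arguments, and where I expect the only real care to be required.
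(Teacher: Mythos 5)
Your treatment of the Dirichlet cases is exactly the paper's: Proposition \ref{propositionAinvertible} supplies a bounded two-sided inverse, and a boundedly invertible operator is closed. The skeleton of your argument for the four purely-Neumann cases also matches the paper's (work with the explicit representation from Table \ref{explicitdomains} and use boundedness of $I^\alpha$), but the mechanism you propose for the key step has a genuine gap. You want to show that $g_m$, $a_m$ and $c_m$ converge individually, extracting the scalars with bounded boundary-trace functionals. However, the coefficient $a_m$ (and with it $g_m$) is not a well-defined function of $f_m$: the representation in Definition \ref{definitionfractionalderivativeoperatoronX} is invariant under the shift $g\mapsto g-sp_0$, $a\mapsto a+s$ (since $I^\alpha(sp_0)=sp_\alpha$), which changes neither $f$ nor $Af=g+ap_0$, and in the purely-Neumann cases is compatible with the constraint $a=-I^1_\pm g(\pm1)/p^\pm_1(\pm1)$ (resp.\ $a=-I^{\alpha-1}_-g(-1)/p^-_{\alpha-1}(-1)$). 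Consequently no bounded functional of $f_m$ or $Af_m$ can return $a_m$; the natural candidate, $I^1_\pm(\cdot)(\pm1)$ applied to $Af_m=g_m+a_mp_0$, yields $I^1_\pm g_m(\pm1)+a_mp^\pm_1(\pm1)=0$ identically --- it encodes the Neumann constraint, not the coefficient. So ``show $a_m$ is Cauchy'' cannot be carried out as stated, and the claimed convergence of $g_m=Af_m-a_mp_0$ falls with it.

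The paper's proof never separates $g_n$ from $a_n$. Since $Af_n=g_n+a_np_0\to\phi$ by hypothesis and $I^\alpha$ is bounded, $I^\alpha g_n+a_np_\alpha=I^\alpha(Af_n)\to I^\alpha\phi$; subtracting this from $f_n$ shows that $c_np_\eta$ converges, hence $c_n\to c$ and $f=I^\alpha\phi+cp_\eta$. The boundary constraint then passes to the limit because it is a statement about the combination $Af_n$ alone: $0=I^1_\pm(Af_n)(\pm1)\to I^1_\pm\phi(\pm1)$, respectively $0=I^{\alpha-1}_-(Af_n)(-1)\to I^{\alpha-1}_-\phi(-1)$, and Proposition \ref{equivalentdefinitions} then identifies $f$ as an element of $\D(A,\BC)$ with $Af=\phi$. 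If you first fix the normalization $a_m=0$ in your representation (always possible by the shift above, which forces $g_m=Af_m$), your plan collapses to exactly this argument; without such a normalization it does not close.
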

\begin{proof}
By Proposition \ref{propositionAinvertible}, if the domains encode at least one Dirichlet boundary condition, then the operators $(A , \BC)$ are invertible and hence closed. Consider the remaining four cases, namely, 
\begin{equation}
\label{NNfourcases}
 \begin{split}(\partial_\pm^\alpha, \mathrm{NN})& \mbox{ on } L_1[-1,1] \mbox{ and } C_0[-1,1]\;\mbox{ respectively,}\\
  (D_+^\alpha, \mathrm{NN})& \mbox{  on }  L_1[-1,1] \\ (\partial_-^\alpha, \mathrm{N^*N})&  \mbox{ on }C_0[-1,1].\end{split}
\end{equation}
For each $(A, \BC)$ on $X$ given in \eqref{NNfourcases}, we show that if $\left\{f_n \right\} \subset \D(A, \BC)$ such that $f_n\to f$ and $Af_n\to \phi$ in $X$, then $f\in \D(A, \BC)$ and $Af=\phi$.  Using Table \ref{explicitdomains}, consider the sequence $\left\{f_n \right\} \in \D(A, \BC)$ given by $ f_n=I^\alpha g_n+a_n p_\alpha + c_n p_\eta,\; g_n \in X$, 
where $\eta$ is given by Definition \ref{definitionfractionalderivativeoperatoronX}. Note that for theses four cases either  $a_n=-I_\pm g_n(\pm1)/p^\pm_1(\pm1)$ or $a_n= -  I_-^{\alpha -1}g_n(-1)/p^-_{\alpha-1}(-1)$, and $A f_n = g_n + a_n p_0$.  Let $f_n \to f$ and $A f_n = g_n +a_n p_0  \to \phi$ in $X$. Then, since $I^\alpha$ is bounded, 
$I^\alpha\left(g_n+a_np_0\right)=I^\alpha g_n+a_n p_\alpha \to I^\alpha \phi$. This implies that $ c_n p_\eta = f_n - (I^\alpha g_n+a_n p_\alpha) \to f - I^\alpha \phi \in X$ and thus, there exists $c$ such that $c_n \to c$. Hence, $ f = I^\alpha \phi + c p_{\eta}$. 

As $g_n+a_np_0\to\phi$,  in the  case of $a_n=-Ig_n(1)/p_1(1)$ we have  that $I_\pm\phi(\pm1)=0$, and in the case of $a_n= -  I_-^{\alpha -1}g_n(-1)/p^-_{\alpha-1}(-1)$ we obtain  $I_-^{\alpha-1}\phi(-1)=0$. By Proposition \ref{equivalentdefinitions}, $f\in\D(\BC)$  and hence  the operators $(A, \BC)$ given in Definition \ref{definitionfractionalderivativeoperatoronX} are closed in $X$.
\end{proof}

\subsection{A core for fractional derivative operators}
 
In the following, we refer to  
\begin{equation}\label{remarkpolynomialsinX} P  = \sum_{m=0}^{N} k_m p_m
\end{equation} as polynomials (with integer powers) belonging to $X$ where the constants $k_m$ are constrained to ensure that $P \in X$; i.e., they might have to be constrained so that $P$ satisfies Dirichlet boundary conditions. We will repeatedly make use of the Stone-Weierstrass theorem, e.g. \cite[Corollary 4.50]{Folland1999}, stating that the polynomials belonging to $X$ are dense in $X$.
\begin{theorem}
\label{theoremCore} The subspace 
\begin{equation}
\label{coreA}
\mathcal C(A , \BC )=\left\{f : f = I^\alpha P +a p_\alpha+b p_{\alpha-1}+c p_{\eta},P\in X\mbox{ polynomial} \right\}
\end{equation}
is a core of the fractional derivative operators $(A ,\BC )$ given by Proposition \ref{equivalentdefinitions} if for each polynomial $P \in X$,  $\eta$ is given in Definition \ref{definitionfractionalderivativeoperatoronX} and the coefficients $a,\;b,\; c\in\R$ are given by Table \ref{explicitdomains}.
\end{theorem}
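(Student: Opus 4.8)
The plan is to show that $\mathcal C(A,\BC)$ is dense in $\D(A,\BC)$ with respect to the graph norm $\|f\|_A = \|f\|_X + \|Af\|_X$, which is exactly what it means to be a core of the closed operator $(A,\BC)$ (closedness being Theorem \ref{theoremClosedA}). Since a function in $\mathcal C(A,\BC)$ is obtained from the general domain representation $f = I^\alpha g + a p_\alpha + b p_{\alpha-1} + c p_\eta$ of Definition \ref{definitionfractionalderivativeoperatoronX} by replacing the free function $g \in X$ with a polynomial $P \in X$, the heart of the matter is to approximate $g$ by polynomials in the graph norm while tracking how the boundary coefficients respond. I would split the argument into the eight cases carrying at least one Dirichlet condition and the four remaining $\mathrm{NN}$-type cases of \eqref{NNfourcases}.

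For the Dirichlet cases I would exploit the bounded inverse $B$ constructed in Proposition \ref{propositionAinvertible}, which has precisely the form $Bg = I^\alpha g + b p_{\alpha-1} + c p_\eta$ with the Table \ref{explicitdomains} coefficients and satisfies $AB = I$ and $BA = I$. Thus $\mathcal C(A,\BC) = \{BP : P \in X \text{ a polynomial}\}$ and $\D(A,\BC) = B(X)$. Because $\|Bg\|_A = \|Bg\|_X + \|g\|_X$, with $B$ bounded and $\|g\|_X \le \|Bg\|_A$, the map $B$ is an isomorphism from $(X,\|\cdot\|_X)$ onto $(\D(A,\BC),\|\cdot\|_A)$. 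By the Stone--Weierstrass theorem the polynomials belonging to $X$ are dense in $X$, so their image under $B$ is dense in $\D(A,\BC)$ in the graph norm; that is, $\mathcal C(A,\BC)$ is a core.

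For the four $\mathrm{NN}$-type cases $A$ is no longer invertible, but each $f \in \D(A,\BC)$ still has the form $f = I^\alpha g + a p_\alpha + c p_\eta$ with $Af = g + a p_0$, where the coefficient $a = \ell(g)$ is the bounded linear functional of $g$ dictated by Table \ref{explicitdomains} (for instance $\ell(g) = -I_+^1 g(1)/p_1^+(1)$, or $\ell(g) = -I_-^{\alpha-1}g(-1)/p_{\alpha-1}^-(-1)$ in the $\mathrm{N^*N}$ case) and $c \in \R$ is free. Given such an $f$, I would pick polynomials $P_n \to g$ in $X$ by Stone--Weierstrass, set $a_n = \ell(P_n)$, retain the same $c$, and form $f_n = I^\alpha P_n + a_n p_\alpha + c p_\eta \in \mathcal C(A,\BC)$. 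Boundedness of $I^\alpha$ and of $\ell$ give $a_n \to a$, hence $f_n \to f$ in $X$, while $Af_n = P_n + a_n p_0 \to g + a p_0 = Af$, so $f_n \to f$ in the graph norm.

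The one genuinely technical point, and the step I expect to be the main obstacle, is justifying that the coefficient maps $g \mapsto a, b, c$ taken from Table \ref{explicitdomains} are continuous on the relevant $X$, since this is what forces both $f_n \to f$ and $Af_n \to Af$. This reduces to showing that the point evaluations $I^\nu_\pm g(\mp1)$ appearing there define bounded functionals: for the order-$\alpha$ and order-$1$ evaluations the convolution kernel $p_{\alpha-1}$ (or $p_0$) is bounded on $(-1,1)$, so the functional is bounded on both $L_1[-1,1]$ and $C[-1,1]$; the delicate case is the $\mathrm{N^*N}$ functional $g \mapsto I_-^{\alpha-1}g(-1)$, whose kernel behaves like $(1+s)^{\alpha-2}$ near $s=-1$, but since $\alpha-2 > -1$ this singularity is integrable and the functional is bounded on $C[-1,1]$, which is the only space where that case occurs. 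Once this continuity is secured, the two displays above close the argument.
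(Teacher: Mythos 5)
Your proof is correct and, at its core, runs on the same mechanism as the paper's: approximate the free function $g$ by polynomials $P_n$ via Stone--Weierstrass, use boundedness of $I^\alpha$ together with continuity of the coefficient functionals from Table \ref{explicitdomains} to get $f_n\to f$ in $X$, and read off $Af_n=P_n+a_np_0\to g+ap_0=Af$; your treatment of the four $\mathrm{NN}$-type cases is essentially verbatim the paper's argument. Where you genuinely differ is in the eight cases carrying a Dirichlet condition: the paper handles these uniformly by the same direct computation, whereas you invoke the bounded inverse $B$ of Proposition \ref{propositionAinvertible} and observe that $B$ is a topological isomorphism from $(X,\|\cdot\|_X)$ onto $(\D(A,\BC),\|\cdot\|_A)$, so density of the polynomials pushes forward to density of $\mathcal C(A,\BC)$ in the graph norm. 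That is a clean repackaging which spares you any coefficient bookkeeping in those cases; the paper's version buys uniformity across all twelve cases at the cost of the (short) verification that the coefficient maps are continuous. On that last point you in fact supply more detail than the paper, which merely asserts that $a_n,b_n,c_n$ ``depend continuously on $P_n$'': your observation that the only delicate functional, $g\mapsto I_-^{\alpha-1}g(-1)$, has kernel behaving like $(1+s)^{\alpha-2}$, hence integrable since $\alpha-2>-1$, and is therefore bounded on $C[-1,1]$ --- the only space where the $\mathrm{N^*}$ cases occur --- is exactly the justification the paper leaves implicit.
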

\begin{proof}
We need to show that for $f \in \D (A, \BC)$ there exists $f_n \subset \mathcal C(A, \BC)$ such that $f_n\to f$ and $Af_n\to Af$ in $X$. 

Let $f = I^\alpha g + a p_\alpha + bp_{\alpha -1}+ c p_\eta\in \D(A,\BC)$ and let $P_n\to g$ with $P_n =\sum_{m=0}^{N_n} k_m p_m \in X$.  Consider $f_n=I^\alpha P_n + a_n p_\alpha + b_n p_{\alpha-1}+c_n p_\eta$ as given in Table \ref{explicitdomains}. Note that
 $a_n, b_n, c_n$ depend continuously on $P_n$ in $X$. If we have at least one Dirichlet boundary condition, then observe that $a_n =0$ while $b_n, c_n$ are either zero or depend continuously on $P_n$ in $X$. On the other hand, if there are no Dirichlet boundary conditions then $b_n=0$ while $a_n$ depends continuously on $P_n$ in $X$ and since $c$ is free, we set $c_n =c$. For $\nu >0$, $I^\nu$  is bounded, and thus continuous on $X$, thus $I^\nu P_n \to I^\nu g$ in $X$ for $\nu \in \left\{ \alpha, \; 1, \; \alpha -1 \right\}$. Hence, $f_n \to f$ in $X$. Moreover, using \eqref{specialpDalpha} we have that $Af_n=P_n + a_n p_0$ and $Af = g + ap_0$. Thus, $Af_n \to Af$ in $X$ and hence, $\mathcal{C}(A, \BC)$ is a core of $(A, \BC)$. 
\qquad
\end{proof}

\subsection{The Range of Fractional Derivative Operators}
We now show that for each $(A, \BC)$, $\mathrm{rg}( I - A)$ is dense in $X$ by showing that for each polynomial $P\in X$ we can construct a function $\varphi \in \D(A,\BC)$ such that  $( I - A) \varphi = P$.  To this end, let $H_{\alpha , \beta} (x)= (1+x)^\beta E_{\alpha, \beta}((1+x)^\alpha),\; \alpha >0, \;\beta >-1$, where $E_{\alpha, \beta}$ denotes the standard two parameter Mittag-Leffler function \cite{Haubold2011}; that is, 
$$ H_{\alpha, \beta} (x) = \sum_{n=0}^\infty p_{n\alpha+\beta}(x), \textnormal {for}  \; x \in [-1,1], \; \alpha >0, \;\beta >-1.$$ Note that $H_{\alpha , \beta}  \in L_1[-1,1]\; \mathrm{and\; if \;} \beta \geq 0, \; H_{\alpha , \beta} \in C[-1,1]$.  Moreover, since  $I^\nu$ is bounded on $L_1[-1,1]$ and the first derivative operator $\left(D , W^{1 ,1}[-1,1]\right)$ is closed in $L_1[-1,1]$,
\begin{equation}
\label{RecurrenceInuandDmittag}\begin{split}
 H_{\alpha,\beta}=& p_{\beta}+ H_{\alpha,  \beta + \alpha}  \\
 I^{\nu} \left(H_{\alpha , \beta} \right) =& I^{\nu}\left(\sum_{n=0}^\infty p_{n\alpha+\beta} \right)  = \sum_{n=0}^\infty p_{n\alpha+\beta+\nu} = H_{\alpha , \beta+\nu}, \; \nu >0, \; \beta>-1\\
 D H_{\alpha, \beta} =&  \sum_{n=0}^\infty D p_{n \alpha + \beta} =\sum_{n=0}^\infty p_{n \alpha + \beta-1} = H_{\alpha, \beta -1}, \; \beta >0.
\end{split}\end{equation}

Using \eqref{specialpDalpha}  and \eqref{RecurrenceInuandDmittag} along with $\eta$ as in Definition \ref{definitionfractionalderivativeoperatoronX} we obtain for $m \in \mathbb{N}_0$, 
\begin{align}
\label{derivativesofMittag}
 D H_{\alpha, 0} =& H_{\alpha, \alpha -1}, &  D H_{\alpha, \alpha-1} =& H_{\alpha , \alpha -2},\nonumber\\ 
 \partial^{\alpha-1} H_{\alpha, \alpha-1} =& H_{\alpha , 0}, & \partial^{\alpha-1} H_{\alpha, 0} =& H_{\alpha , 1} ,\nonumber \\
  D^{\alpha -1} H_{\alpha, \alpha -2} =& H_{\alpha, \alpha -1}, & D^{\alpha-1} H_{\alpha, \alpha +m}  =&\partial^{\alpha-1} H_{\alpha, \alpha +m}= H_{\alpha, m +1}, \nonumber \\
  A H_{\alpha, \alpha-1} =&H_{\alpha, \alpha-1}, & A H_{\alpha,\eta} = &H_{\alpha,\eta}, \quad A H_{\alpha , \alpha +m} = p_m + H_{\alpha, \alpha +m}.
\end{align}
Let $H^\pm_{\alpha,\beta}(x)=H_{\alpha,\beta}(\pm x)$.

\begin{theorem}
\label{denseRangeIminusA}
Let $(A, \BC)$ denote the fractional derivative operators on $X$ as in Definition \ref{definitionfractionalderivativeoperatoronX}. Then, $\mathrm{rg}(I-A)$ are dense in $X$ for each $(A, \BC)$.
\end{theorem}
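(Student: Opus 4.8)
The plan is to use the density of polynomials in $X$ (Stone--Weierstrass) to reduce the claim to showing that every polynomial $P=\sum_{m=0}^{N} k_m p^\pm_m\in X$ belongs to $\mathrm{rg}(I-A)$; that is, to exhibit $\varphi\in\D(A,\BC)$ with $(I-A)\varphi=P$. The Mittag-Leffler functions $H^\pm_{\alpha,\beta}$ are tailor-made for this, since \eqref{derivativesofMittag} records exactly how $A$ acts on them: $A H^\pm_{\alpha,\alpha-1}=H^\pm_{\alpha,\alpha-1}$, $A H^\pm_{\alpha,\eta}=H^\pm_{\alpha,\eta}$, and $A H^\pm_{\alpha,\alpha+m}=p^\pm_m+H^\pm_{\alpha,\alpha+m}$.

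First I would write down the candidate
\[
\varphi \;=\; a\,H^\pm_{\alpha,\alpha-1} \;+\; c\,H^\pm_{\alpha,\eta} \;-\; \sum_{m=0}^{N} k_m\,H^\pm_{\alpha,\alpha+m},
\]
with $\eta$ as in Definition \ref{definitionfractionalderivativeoperatoronX} and $a,c$ to be fixed. By linearity and \eqref{derivativesofMittag},
\[
A\varphi = a\,H^\pm_{\alpha,\alpha-1} + c\,H^\pm_{\alpha,\eta} - \sum_{m=0}^{N} k_m\bigl(p^\pm_m + H^\pm_{\alpha,\alpha+m}\bigr) = \varphi - P,
\]
so $(I-A)\varphi=P$ for \emph{every} choice of $a,c$. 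In other words, $H^\pm_{\alpha,\alpha-1}$ and $H^\pm_{\alpha,\eta}$ span the two-dimensional kernel of $I-A$ within the fractional-polynomial class, matching the two degrees of freedom noted after \eqref{specialpDalpha}, while $-\sum_m k_m H^\pm_{\alpha,\alpha+m}$ is a particular solution producing $P$.

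Second, using the recurrences in \eqref{RecurrenceInuandDmittag} I would check that each building block already has the domain shape of \eqref{domaincandidatefractionalderivativeoperatoronX}: indeed $H^\pm_{\alpha,\alpha+m}=I^\alpha_\pm H^\pm_{\alpha,m}$ with $H^\pm_{\alpha,m}\in X$, while $H^\pm_{\alpha,\alpha-1}=I^\alpha_\pm H^\pm_{\alpha,\alpha-1}+p^\pm_{\alpha-1}$ and $H^\pm_{\alpha,\eta}=I^\alpha_\pm H^\pm_{\alpha,\eta}+p^\pm_{\eta}$ supply precisely the $p^\pm_{\alpha-1}$ and $p^\pm_{\eta}$ terms. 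It then remains to pin down $a,c$ so that $\varphi$ satisfies the two boundary conditions coded in $\BC$, after which Proposition \ref{equivalentdefinitions} certifies $\varphi\in\D(A,\BC)$. Each boundary condition is a linear equation in $(a,c)$ whose coefficients are the relevant boundary functionals (point values, or $\partial^{\alpha-1}_\pm$, $D^{\alpha-1}_\pm$, or $D$ values at $\mp1$) evaluated on the two kernel functions, and I would solve the resulting $2\times2$ system using the explicit endpoint data, e.g.\ $p^\pm_\beta(\mp1)=0$ for $\beta>0$, $p_0(\mp1)=1$, and the strict positivity of the series $H^\pm_{\alpha,\beta}$ at the opposite endpoint.

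I expect the main obstacle to be verifying, across the twelve entries of Table \ref{explicitdomains}, that this $2\times2$ boundary system is genuinely solvable, i.e.\ that its coefficient matrix is nonsingular (and, in the two $\mathrm{NN}$/$\mathrm{N^*N}$ rows where the domain already carries a free constant, that the single remaining condition can still be met). This is exactly where the difference between Dirichlet, Neumann, and Neumann$^*$ enters and where the left/right ($\pm$) symmetry must be tracked carefully. A secondary technical point is $C_0(\Omega)$-membership: for the Dirichlet cases one must confirm $\varphi$ actually vanishes at the excluded endpoint, but this is automatic once the Dirichlet condition is imposed, whereupon $\varphi\in X$ and $A\varphi=\varphi-P\in X$, placing $\varphi$ in the domain.
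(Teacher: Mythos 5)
Your proposal is correct and follows essentially the same route as the paper: the identical ansatz $\varphi=-\sum_{m}k_m H^\pm_{\alpha,\alpha+m}+rH^\pm_{\alpha,\alpha-1}+sH^\pm_{\alpha,\eta}$, the identity $(I-A)\varphi=P$ via \eqref{derivativesofMittag}, and the determination of the free constants from the boundary conditions (the paper's Table \ref{tableconstantsmittag} shows your $2\times2$ system is in fact trivial, one of the two constants being zero in every case). The only slip is the claim that $H^\pm_{\alpha,\alpha+m}=I^\alpha_\pm H^\pm_{\alpha,m}$ with $H^\pm_{\alpha,m}\in X$ for $m=0$ in the $C_0(\Omega)$ Dirichlet cases, where $H^\pm_{\alpha,0}$ does not vanish at the excluded endpoint; instead one writes $H^\pm_{\alpha,\alpha}=p^\pm_\alpha+I^\alpha_\pm H^\pm_{\alpha,\alpha}$ and uses the $ap_\alpha$ slot of \eqref{domaincandidatefractionalderivativeoperatoronX} with $a=-k_0$ and $g+ap_0\in X$, exactly as the paper does.
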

\begin{proof}
Note that the polynomials  are dense in $X$. For each $P = \sum_{m =0}^N k_m p_m^\pm \in X $ and $(A, \BC)$, where we take '$+$' if $X=L_1[-1,1]$ and '$-$' if $X+C_0(\Omega)$, let 
$$ \varphi = - \sum_{m =0}^N k_m H^\pm_{\alpha , \alpha +m} + r H^\pm_{\alpha, \alpha-1} + s H^\pm_{\alpha , \eta},$$
with $r$ and $s$ as in Table \ref{tableconstantsmittag} and $\eta$ given by Definition \ref{definitionfractionalderivativeoperatoronX}. To show that $\mathrm{rg}(I-A)$ is dense in $X$, we are gong to show that $\varphi \in \D (A, \BC)$ and $(I-A) \varphi = P$. 

 Firstly, using \eqref{RecurrenceInuandDmittag} observe that for each $(A, \BC)$, $\varphi$ is of the required form $$\varphi=I_\pm^\alpha g -k_0 p^\pm_\alpha+ r p^\pm_{\alpha-1}+ s p^\pm_{\eta},$$ where $g = -\sum_{m =1}^N k_m H^\pm_{\alpha , m}-k_0 H^\pm_{\alpha, \alpha} +r H^\pm_{\alpha, \alpha -1}+s H^\pm_{\alpha, \eta}$ and $A \varphi = g - k_0 p_0$. Secondly, it is straightforward, using \eqref{derivativesofMittag}, to verify that $\varphi \in \D(A, \BC)$.
 To complete the proof note that, for each $(A, \BC)$,  it follows by \eqref{derivativesofMittag} that $$(I-A)\varphi = \sum_{m=0}^N k_m p_m^\pm =P.$$ Hence, $\mathrm{rg}(I-A)$ is dense in $X$ for each $(A, \BC)$. 
\qquad \end{proof}

\begin{table}
\centering
\vline
\begin{tabular}{c|c|c|c|c}
   \Xhline{4\arrayrulewidth}
  $L_1[-1,1]$ &$C_0(\Omega)$ &r & s \\ 
    \Xhline{4\arrayrulewidth}
  $(A^+, \mathrm{DD})$ &$(\partial^\alpha_-, \mathrm{DD})$ &$\frac{ \sum_{m =0}^N k_m H^\pm_{\alpha , \alpha +m}(\pm1)} {H_{\alpha , \alpha -1 }(\pm1)}$  & 0\\
  \hline
  $(A^+, \mathrm{DN})$ &$(\partial^\alpha_-, \mathrm{ND})$  &$\frac{ \sum_{m =0}^N k_m H^\pm_{\alpha , m+1}(\pm1)} {H_{\alpha , 0 }(\pm1)}$ &0 \\
  \hline
  &$(\partial^\alpha_-, \mathrm{N^*D})$ &$\frac{ \sum_{m =0}^N k_m H^-_{\alpha , \alpha +m-1}(-1)} {H^-_{\alpha , \alpha -2 }(-1)}$&0\\
  \hline
  $(A^+, \mathrm{ND})$ &$(\partial^\alpha_-, \mathrm{DN})$ & 0 & $\frac{ \sum_{m =0}^N k_m H^\pm_{\alpha , \alpha +m}(\pm1)} {H^\pm_{\alpha , \eta }(\pm1)}$   \\
  \hline
  $(A^+, \mathrm{NN})$ &$(\partial^\alpha_-, \mathrm{NN})$  & 0 & $\frac{ \sum_{m =0}^N k_m H^\pm_{\alpha , m+1}(\pm1)} {H^\pm_{\alpha , \eta +1 }(\pm1)}$ \\ 
  \hline
  &$(\partial^\alpha_-, \mathrm{N^*N})$ & 0 & $\frac{ \sum_{m =0}^N k_m H^-_{\alpha , \alpha +m-1}(-1)} {H^-_{\alpha , \alpha -1 }(-1)}$ \\ 
  \hline
\end{tabular}
\caption{\label{tableconstantsmittag}
Constants for $\varphi - \sum_{m =0}^N k_m H^\pm_{\alpha , \alpha +m} + r H^\pm_{\alpha, \alpha-1} + s H^\pm_{\alpha , \eta}$ used in the proof of Theorem \ref{denseRangeIminusA}.}
\end{table}

\section{Extension of a finite state Markov process to a Feller process}
\label{sectionmarkov}

In this section we develop a new method to infer properties of a Feller process by spatially discrete approximations. 
To prove the main results in Section 2 we show that the operators in Section 3 can be approximated using dissipative finite volume schemes. These finite volume scheme operators can be identified with generators of finite state Markov processes whose behaviour at the boundary points are easily identified.   
To exploit the fact that convergence, uniformly for $t \in [0,t_0]$, of Feller semigroups on $C_0(\Omega)$ implies convergence of the processes we show how to turn a $(n)$-state (sub)-Markov process $(X^{n}_t)_{t \geq 0}\in \{1,\;2,\ldots \; , n\}$ into a Feller process $(\tilde X^{n}_t)_{t \geq 0}\in \Omega$ by having parallel copies of the finite state processes whose transition matrices interpolate continuously. The main idea here is to divide the interval $[-1,1]$ into $n+1$ grids of equal length $h$ so that the (Feller) process can jump between grids only in multiples of $h$. The transition rates for the (Feller) process $(\tilde X^{n}_t)_{t \geq 0}$ in the interval $[-1+(i-1)h,-1+ih]$ jumping up or down by $jh$ interpolate continuously between the transition rates of the finite state sub-Markov process $( X^{n}_t)_{t \geq 0}$ being in state $i-1$ going to state $(i-1+j)$ and the transition rates of being in state $i$ going to state $(i+j)$. 

In order to properly describe the interpolated transition rates and the resulting generator we start with some notation.

\begin{definition}
\label{definitiongridcoordinatefunctions}
Let $n \in \mathbb{N}$, then we divide the interval $[-1,1]$ into $n+1$ grids, each of width $h=\frac{2}{n+1}$,  such that the first $n$ grids are half open (on the right) while the $(n+1)^{\mathrm{st}}$ (last) grid is closed. 
\begin{itemize}
\item \emph{Grid Number:} Let $\iota : [-1,1] \to \{1,2,\ldots  , n+1\}$ denote the grid number of $x$,  $$\iota (x) = \left\lfloor \frac{x+1}{h}\right\rfloor +1$$ with $\iota(1) = n+1$ where $\left\lfloor \frac{x+1}{h}\right\rfloor$ denotes the largest integer not greater than $\frac{x+1}{h}$. 
\item \emph{Location within Grid:} Let $\lambda: [-1,1]  \to [0,1]$ denote the location within the grid of $x$ given by $$\lambda(x) = \frac{x+1}{h} - \big( \iota(x)-1\big)$$ with $\lambda(1)=1$. 
\item \emph{Grid Projection Operator:}
Let $L_1\left([-1,1];\mathbb{R}^{n+1}\right)$ denote the space of vector-valued integrable functions $v: [-1,1] \rightarrow \mathbb{R}^{n+1}$. The projection operator $\Pi_{n+1}:L_1[-1,1]\to L_1\left([0,1];\mathbb{R}^{n+1}\right)$ is defined by $$\left(\Pi_{n+1} f\right)_j(\lambda) =f((\lambda + j-1)h-1), f \in L_1 [-1,1],$$ where $\lambda \in [0,1]$ and $j \in \{1,2,\ldots , n+1\}$. 
\item \emph{Grid Embedding Operator:} The grid embedding operator $\Pi^{-1}_{n+1}$ embeds integrable functions defined on the grids onto $L_1[-1,1]$;  $$\left(\Pi^{-1}_{n+1} v\right)(x)=v_{\iota(x)}(\lambda (x)).$$ 
\end{itemize}
\end{definition}
Note that we can identify the space of continuous functions $C[-1,1]\subset L_1[-1,1]$ and that  $\Pi_{n+1}C[-1,1]= \left\{v \in C\left([0,1];\mathbb{R}^{n+1}\right) : v_{j+1}(0) = v_j (1) \mbox{ for } j = 1, \ldots ,n\right\}$ and that $\Pi^{-1}_{n+1}$ defined on the range is a closed and bounded operator on $X$. Moreover,
\begin{equation}
\label{projectembedidentityC0}
\Pi^{-1}_{n+1} \left(\Pi_{n+1} f\right) = f, \; f \in C[-1,1] \; \mathrm{and} \; \Pi_{n+1} \left(\Pi^{-1}_{n+1} v \right) = v, v \in \mathcal{R}(\Pi_{n+1}).
\end{equation}

\begin{definition}
\label{definitiontransitionoperatorG}

Let $G_{n \times n}$ denote a given $n\times n$ transition matrix on $l^n_\infty$. The \emph{transition operator} $G : C[-1,1] \rightarrow C [-1,1]$ is given by $$Gf(x) :=\left(\Pi^{-1}_{n+1} \left(G_{n+1} \Pi_{n+1}f\right)\right) (x) = \left[ G_{n+1}(\lambda (x)) (\Pi_{n+1}f)(\lambda (x))\right]_{\iota (x)},$$
where  the $(n+1)\times (n+1)$ \emph{interpolation matrix} $G_{n+1}$ is given by 
\begin{equation}
\label{interpolationmatrixG}
  G_{n+1}(\lambda)=\begin{pmatrix}
    g_{1,1}& D^l(\lambda) g_{1,2}& \cdots& D^l(\lambda)g_{1,n} & 0\\
    N^l(\lambda) g_{2,1}& & & &N^r(\lambda) g_{1,n}\\
    \vdots &  &  &  & \vdots \\
    N^l (\lambda) g_{i , 1} & \multicolumn{3}{c}{ (1-\lambda) g_{i-1,j-1}+\lambda g_{i,j}} & N^r(\lambda) g_{i-1, n}\\
    \vdots &  &  &  & \vdots \\
    N^l(\lambda) g_{n,1} &  & &  & N^r(\lambda) g_{n-1,n}\\
    0 & D^r(\lambda) g_{n,1} & \cdots & D^r(\lambda) g_{n,n-1}& g_{n,n}
  \end{pmatrix}.
\end{equation}
The parameter $\lambda \in [0,1]$, $g_{i, j}$ are the entries of $G_{n \times n}$, and $D^l, \; N^l, \; D^r, \; N^r$ are \textit{continuous interpolating functions} of the parameter $\lambda$ such that $G_{n+1}(\lambda)$ is also a rate matrix for each $\lambda \in [0,1]$. The interpolating functions are chosen in the following manner depending on the boundary conditions at hand. For $\mathrm{DR}$, $N^l=\mathbf{1}$ and $D^l$ is a continuous function of the parameter $\lambda$ that interpolates from $0$ to $1$; for other left boundary conditions we set $D^l=\mathbf{1}$ and $N^l$ interpolates from $0$ to $1$. Similarly, for $\mathrm{L D}$, $N^r=\mathbf{1}$ and $D^r$ interpolates from $1$ to $0$; for other boundary conditions on the right we set $D^r=\mathbf{1}$ and $N^r$ interpolates from $1$ to $0$.

\end{definition}

%
\begin{lemma} \label{remarkinterpolatingfunctionsandfellersemigroup}The transition operator $G$ is a bounded operator on $C_0(\Omega)$.\end{lemma}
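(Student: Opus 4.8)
The plan is to show that $G$ maps $C_0(\Omega)$ into itself and is norm-bounded, by unpacking the definition $Gf = \Pi^{-1}_{n+1}(G_{n+1}\,\Pi_{n+1}f)$ and exploiting the continuity of the interpolating functions together with the fact that $G_{n+1}(\lambda)$ is a rate matrix for every $\lambda$. First I would establish boundedness: since $G_{n+1}(\lambda)$ is a rate matrix, every entry is bounded by $\max_{i\neq j}|g_{i,j}| + \max_i|g_{i,i}| =: M$ uniformly in $\lambda\in[0,1]$ (the interpolating functions $D^l,N^l,D^r,N^r$ take values in $[0,1]$), so the operator norm of $G_{n+1}(\lambda)$ on $\ell^{n+1}_\infty$ is bounded uniformly in $\lambda$. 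Because $\Pi_{n+1}$ is a (norm-preserving) isometry onto its range and $\Pi^{-1}_{n+1}$ is bounded on that range by the remark following \eqref{projectembedidentityC0}, composing the three maps yields $\|Gf\|_\infty \le C\|f\|_\infty$ for a constant $C$ depending only on $n$ and the $g_{i,j}$.

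The substantive part is continuity, i.e.\ showing $Gf\in C_0(\Omega)$ whenever $f\in C_0(\Omega)$. I would argue that $\Pi_{n+1}f$ lies in $\Pi_{n+1}C[-1,1]$, the space of continuous vector-valued functions satisfying the matching condition $v_{j+1}(0)=v_j(1)$; then I must check that the image $G_{n+1}(\cdot)\,(\Pi_{n+1}f)(\cdot)$ again satisfies this compatibility condition at the grid interfaces, so that $\Pi^{-1}_{n+1}$ returns a genuinely continuous function on $[-1,1]$ rather than one with jumps across grid boundaries. The key point is that $G_{n+1}(\lambda)$ was constructed precisely so that its $i$-th row at $\lambda=1$ matches its $(i+1)$-th row at $\lambda=0$ (both equal the convex-combination row with weights picking out $g_{i-1,\cdot}$ at the correct interior states), which is exactly what is needed for the matching condition to be preserved. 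The interpolating functions $D^l,N^l,D^r,N^r$ being continuous, together with the continuity of the linear interpolation $(1-\lambda)g_{i-1,j-1}+\lambda g_{i,j}$ in $\lambda$, then gives continuity within each grid.

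The main obstacle, and where I would concentrate the argument, is the behaviour at the two boundary grids together with the decay condition defining $C_0(\Omega)$. If $\Omega$ excludes an endpoint (Dirichlet case), I must verify that $Gf$ vanishes there; this is where the choice $D^l$ interpolating from $0$ to $1$ (resp.\ $D^r$ from $1$ to $0$) does the work, forcing the relevant corner entries of $G_{n+1}$ to switch off the coupling across the excluded boundary so that the value at the Dirichlet endpoint is controlled by $f$'s vanishing there. For the non-Dirichlet (Neumann-type) boundaries the interval is closed, $C_0(\Omega)=C[-1,1]$, and only continuity up to the endpoint is required, which follows once the interface-matching at the first and last grids is checked using the zero entries in the corners of \eqref{interpolationmatrixG} and the values $N^l,N^r$. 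I would therefore finish by treating the left and right boundary grids as special cases of the interface computation, confirming in each of the prescribed interpolation regimes that the output respects both the matching condition and any Dirichlet vanishing, and conclude that $G$ is a bounded operator on $C_0(\Omega)$.
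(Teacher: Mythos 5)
Your proposal is correct and follows the same route as the paper, whose proof is a single sentence asserting that the interpolating functions guarantee continuity of $Gf$ at the grid points $x=ih-1$ and the vanishing of $Gf$ at any Dirichlet boundary point; you simply make explicit the boundedness estimate, the interface-matching computation (row $i$ at $\lambda=1$ agreeing, after the column shift, with row $i+1$ at $\lambda=0$), and the role of $D^l,D^r,N^l,N^r$ at the boundary grids. The only quibble is a harmless indexing slip: at the interface both rows pick out $g_{i,\cdot}$, not $g_{i-1,\cdot}$.
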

\begin{proof}
The interpolating functions $D^l,\; D^r, \; N^l \; \mathrm{and}\; N^r$  ensure that the limit $\lim_{x\to x_b}Gf(x)=0$ for a Dirichlet boundary point $x_b\in [-1,1]\setminus\Omega$ as well as the continuity of $Gf$ at each grid point $x=ih-1,\; 1\le i\le n$. 
\end{proof}

In view of \eqref{projectembedidentityC0},
$$S(t)f:=e^{tG}f=\sum_{j=0}^\infty \frac{t^j}{j!}\left(\Pi^{-1}_{n+1} G_{n+1}\Pi_{n+1}\right)^j f= \Pi^{-1}_{n+1} e^{t G_{n+1}}\Pi_{n+1} f.$$ As $\Pi^{-1}_{n+1}$ and $\Pi_{n+1}$ are positive contractions, $(S(t))_{t \geq 0}$ is a Feller semigroup if the semigroup $(e^{t G_{n+1}})_{t \geq 0}$ is a positive contraction on $C\left([0,1];\mathbb{R}^{n+1}\right)$, which is the case if and only if $ G_{n+1}(\lambda)$ generates a positive contraction semigroup on $\ell^{n+1}_\infty$ for each $\lambda\in[0,1]$; that is, $ G_{n+1}(\lambda)$ is a rate matrix whose row sums are non-positive. 

\begin{remark}Note that the approximation operator usually used in numerical analysis that is obtained by linearly interpolating the vector $\tilde G \mathbf f $ is not the generator of a positive semigroup and therefore does not admit a straightforward stochastic interpretation.
\end{remark}

In the sequel, for a linear operator $A:\mathcal D(A) \subset X\to X$ and a subspace $Y\subset X$, we denote by $A\Big|_Y$, and call it the \textit{part of $A$ in $Y$}, the maximal operator on $Y$ induced by $A$; that is, 
$$\mathcal D (A\Big|_Y)=\{y\in Y\cap \mathcal{D}(A): \quad Ay\in Y\}$$ and $A\Big|_Y y=Ay$ for $y\in \mathcal D (A\Big|_Y).$  

In applications, one is usually interested in observing the time evolution of the forward semigroup that acts on the space of bounded (complex) Borel measures, $\mathcal{M}_{\mathcal{B}}(\Omega)$. It is well known that $L_1(\Omega)$ is isometrically isomorphic to the closed subspace of $\mathcal{M}_{\mathcal{B}}(\Omega)$ which consists of measures that possess a density. The forward semigroup denoted by $(T(t))_{t \geq 0}$ is the adjoint of $(S(t))_{t \geq 0}$ and the action of the generator $G^*$ of $(T(t))_{t \geq 0}$ can be easily computed for $g \in L_1(\Omega):=L_1[-1,1]$. To this end a simple calculation reveals $$ \int_{-1}^1 (Gf)(x) g(x)\,  \dd x = \int_{-1}^1 f(x) (G^*g)(x) \,  \dd x,$$ where $G^* g = \Pi^{-1}_{n+1} G^T_{n+1} \Pi_{n+1}g$. Thus, $G^*$ leaves $L_1[-1,1]$ invariant and we have the following proposition.

\begin{proposition}
\label{propositiontransitionoperatorG*}
The restriction of the adjoint transition operator to $L_1[-1,1]$ is a bounded linear operator $L_1[-1,1]\to L_1[-1,1]$ and hence it is the part $G^*\Big|_{L_1[-1,1]} $ of $G^*$ in $L_1[-1,1]$. It is given by
 $$G^*f (x) =\left(\Pi^{-1}_{n+1}G^T_{n+1} \Pi_{n+1}\right) f (x) = \left[ G^T_{n+1}(\lambda (x)) (\Pi_{n+1}f)(\lambda (x))\right]_{\iota (x)}$$ where $G^T_{n+1}$ is the transpose of $G_{n+1}$ given by \eqref{interpolationmatrixG}.
\end{proposition}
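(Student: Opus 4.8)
The plan is to verify the claimed duality identity directly and then read off the three assertions of Proposition~\ref{propositiontransitionoperatorG*} (boundedness, that $G^*$ maps $L_1$ into itself, and the explicit formula) as essentially formal consequences. The core computation is the adjoint relation $\int_{-1}^1 (Gf)(x)\,g(x)\,\dd x = \int_{-1}^1 f(x)\,(G^*g)(x)\,\dd x$, which is already sketched in the text immediately preceding the statement; my job is to make the change of variables and the transpose bookkeeping precise. First I would use Definition~\ref{definitiontransitionoperatorG} to write $Gf(x) = [G_{n+1}(\lambda(x))(\Pi_{n+1}f)(\lambda(x))]_{\iota(x)}$ and then split the integral $\int_{-1}^1$ into the $n+1$ grids, so that on grid $i$ we have $\iota(x)=i$ and we substitute $x=(\lambda+i-1)h-1$, $\dd x = h\,\dd\lambda$, turning each grid-integral into an integral over $\lambda\in[0,1]$.

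After this substitution the whole integral becomes $\sum_{i=1}^{n+1}\int_0^1 [G_{n+1}(\lambda)(\Pi_{n+1}f)(\lambda)]_i\,(\Pi_{n+1}g)_i(\lambda)\,h\,\dd\lambda$, which is exactly $h\int_0^1 \langle G_{n+1}(\lambda)(\Pi_{n+1}f)(\lambda),\,(\Pi_{n+1}g)(\lambda)\rangle_{\mathbb R^{n+1}}\,\dd\lambda$ using the standard $\mathbb R^{n+1}$ pairing. Now I would move the matrix across the inner product in the usual finite-dimensional way, $\langle G_{n+1}(\lambda)u,v\rangle = \langle u, G_{n+1}^T(\lambda)v\rangle$, and reverse the change of variables to recognize the result as $\int_{-1}^1 f(x)\,[G_{n+1}^T(\lambda(x))(\Pi_{n+1}g)(\lambda(x))]_{\iota(x)}\,\dd x$, i.e.\ $\int_{-1}^1 f\,(\Pi_{n+1}^{-1}G_{n+1}^T\Pi_{n+1}g)\,\dd x$. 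This identifies $G^*g = \Pi_{n+1}^{-1}G_{n+1}^T\Pi_{n+1}g$ and simultaneously gives the stated formula.

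Finally I would collect the structural conclusions. Since $\Pi_{n+1}$ and $\Pi_{n+1}^{-1}$ are bounded (noted after \eqref{projectembedidentityC0}) and $G_{n+1}^T(\lambda)$ is a finite matrix depending continuously on $\lambda\in[0,1]$, hence with bounded operator norm uniformly in $\lambda$, the composition $\Pi_{n+1}^{-1}G_{n+1}^T\Pi_{n+1}$ is a bounded linear operator on $L_1[-1,1]$; boundedness of $I^\alpha$ is not needed here. Because this bounded operator agrees with the adjoint $G^*$ on the dense subspace where the duality pairing was computed, it must coincide with the part $G^*\big|_{L_1[-1,1]}$ of $G^*$ in $L_1[-1,1]$, and in particular $G^*$ leaves $L_1[-1,1]$ invariant. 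The main obstacle is purely notational rather than conceptual: keeping the grid-index substitution $x\mapsto(\lambda,\iota(x))$ aligned so that the boundary rows and columns of $G_{n+1}(\lambda)$ in \eqref{interpolationmatrixG} transpose correctly and the pairing genuinely decouples grid by grid; once the change of variables is set up carefully the transpose identity is immediate from finite linear algebra.
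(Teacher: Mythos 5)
Your proposal is correct and follows essentially the same route as the paper, which simply asserts that ``a simple calculation reveals'' the duality identity $\int_{-1}^1 (Gf)\,g\,\dd x = \int_{-1}^1 f\,(G^*g)\,\dd x$ with $G^*g=\Pi^{-1}_{n+1}G^T_{n+1}\Pi_{n+1}g$ and then reads off the proposition; you have written out that calculation via the grid-by-grid change of variables and the finite-dimensional transpose identity, and drawn the same structural conclusions (boundedness from $\Pi_{n+1}$, $\Pi_{n+1}^{-1}$ and the uniform bound on $G_{n+1}^T(\lambda)$, hence invariance of $L_1[-1,1]$ and identification with the part of $G^*$). The only cosmetic imprecision is the phrase ``dense subspace'': the pairing identity holds for every $g\in L_1[-1,1]$ and all $f$, which by itself pins down $G^*g$ as the stated $L_1$ function, so no density argument is needed.
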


\begin{proposition}
\label{tildeA}
Let $G_{n \times n}$ be an $n\times n$ rate matrix with non-negative off-diagonal entries and non-positive row sums. For $\lambda \in [0,1]$, let the operator $G$ be as in Definition \ref{definitiontransitionoperatorG} and assume that the interpolating functions $D^l, D^r, N^l \; \mathrm{and} \; N^r$ are such that $G_{n+1}(\lambda)$ is a rate matrix for each $\lambda\in [0,1]$ and $G$ is a bounded operator on $C_0(\Omega)$. Then $G$ generates a Feller semigroup on $C_0(\Omega)$ and the restriction of $G^*$ to $L_1[-1,1]$ generates a strongly continuous positive contraction semigroup on $L_1[-1,1]$.
\end{proposition}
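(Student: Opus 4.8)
The plan is to prove the statement in two stages, first establishing that $G$ generates a Feller semigroup on $C_0(\Omega)$ and then transferring this to the adjoint on $L_1[-1,1]$ by duality. The key structural observation, already developed in the discussion preceding Lemma~\ref{remarkinterpolatingfunctionsandfellersemigroup}, is the factorization $e^{tG}f = \Pi^{-1}_{n+1}\,e^{tG_{n+1}}\,\Pi_{n+1}f$, which reduces everything to understanding the matrix-valued semigroup $(e^{tG_{n+1}(\lambda)})_{t\ge 0}$ pointwise in $\lambda$.

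\textbf{Step 1: the Feller semigroup on $C_0(\Omega)$.} First I would verify that for each fixed $\lambda\in[0,1]$ the interpolation matrix $G_{n+1}(\lambda)$ is genuinely a rate matrix, i.e.\ it has non-negative off-diagonal entries and non-positive row sums. The off-diagonal non-negativity is immediate from \eqref{interpolationmatrixG} since every off-diagonal entry is a product of a non-negative interpolating function ($D^l,N^l,D^r,N^r\ge 0$ on $[0,1]$) with a non-negative off-diagonal entry $g_{i,j}$ of $G_{n\times n}$, or a convex combination $(1-\lambda)g_{i-1,j-1}+\lambda g_{i,j}$ of such entries. The row-sum condition requires the hypothesis that $G_{n\times n}$ has non-positive row sums together with the normalization of the interpolating functions ($N^l,N^r$ interpolate from a boundary value to $1$, $D^l,D^r$ interpolate between $0$ and $1$), which ensures each interpolated row sum remains dominated by the corresponding row sum of $G_{n\times n}$. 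Once $G_{n+1}(\lambda)$ is a rate matrix, $(e^{tG_{n+1}(\lambda)})_{t\ge 0}$ is a positive contraction semigroup on $\ell^{n+1}_\infty$ for each $\lambda$; since the entries depend continuously on $\lambda$, these assemble into a positive contraction semigroup on $C([0,1];\mathbb{R}^{n+1})$. Conjugating by the positive contractions $\Pi_{n+1}$ and $\Pi^{-1}_{n+1}$ (using \eqref{projectembedidentityC0} and the hypothesis that $G$ is bounded on $C_0(\Omega)$ via Lemma~\ref{remarkinterpolatingfunctionsandfellersemigroup}), we conclude that $(e^{tG})_{t\ge 0}$ is a positive, strongly continuous contraction semigroup leaving $C_0(\Omega)$ invariant, hence a Feller semigroup with bounded generator $G$.

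\textbf{Step 2: the adjoint on $L_1[-1,1]$.} For the forward direction I would invoke Proposition~\ref{propositiontransitionoperatorG*}, which gives that $G^*\big|_{L_1[-1,1]}=\Pi^{-1}_{n+1}G^T_{n+1}\Pi_{n+1}$ is a bounded operator on $L_1[-1,1]$. Since $G$ is bounded, the adjoint semigroup $(e^{tG^*})_{t\ge 0}$ is automatically strongly continuous on the dual, and its restriction to $L_1[-1,1]$ is generated by the bounded operator $G^*\big|_{L_1[-1,1]}$, so strong continuity on $L_1$ is immediate from boundedness. Positivity is inherited because $G^T_{n+1}(\lambda)$ is the transpose of a rate matrix: transposition preserves non-negativity of off-diagonal entries, so $G^T_{n+1}(\lambda)$ again has non-negative off-diagonals and therefore generates a positive semigroup $e^{tG^T_{n+1}(\lambda)}$ on $\ell^{n+1}_1$. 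That the resulting $L_1$-semigroup is contractive follows by duality: $\|T(t)\|_{L_1\to L_1}=\|S(t)\|_{C_0\to C_0}\le 1$, or equivalently because the column sums of $G_{n+1}(\lambda)$—the row sums of the transpose—control mass loss and are non-positive, so $e^{tG^T_{n+1}(\lambda)}$ is mass-non-increasing.

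\textbf{Main obstacle.} The routine parts are the positivity and continuity, which follow mechanically from the matrix structure. The step demanding genuine care is verifying the \emph{contraction} property, i.e.\ that the row sums of $G_{n+1}(\lambda)$ stay non-positive for \emph{every} $\lambda\in[0,1]$ and every row, including the boundary rows $1$ and $n+1$ and the interpolated interior rows. This is where the precise choice of the interpolating functions $D^l,N^l,D^r,N^r$ in Definition~\ref{definitiontransitionoperatorG} must be used: one has to check that replacing the original off-diagonal entries by their $\lambda$-scaled or convex-combined versions never increases a row sum beyond the non-positive bound guaranteed for $G_{n\times n}$. I would handle this by writing out the row sum of a generic interior row of $G_{n+1}(\lambda)$ as a convex combination of adjacent row sums of $G_{n\times n}$ (plus boundary correction terms governed by $N^l,N^r$), and confirm each such combination is $\le 0$; the boundary rows are treated separately using that $D^l,D^r$ vanish at the appropriate Dirichlet endpoint.
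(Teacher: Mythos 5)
Your proposal is correct and follows essentially the same route as the paper: contraction from the rate-matrix property of $G_{n+1}(\lambda)$, positivity from the non-negative off-diagonal entries (the paper cites Kamke's theorem here), and the $L_1$ statement by duality via Proposition~\ref{propositiontransitionoperatorG*} and $\|T(t)\|=\|S(t)\|$. One caveat: what you single out as the ``main obstacle''---verifying that the row sums of $G_{n+1}(\lambda)$ stay non-positive for every $\lambda$ and every row---is not part of this proposition at all; it is a \emph{hypothesis} (``assume that the interpolating functions \dots are such that $G_{n+1}(\lambda)$ is a rate matrix''), and the paper defers that verification to Lemma~\ref{Ghdissipative}, where it is done case by case for the specific Gr\"unwald boundary weights of Table~\ref{mainTable}. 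Your sketched general argument (that each interpolated row sum is dominated by a convex combination of row sums of $G_{n\times n}$) would not go through for arbitrary interpolating functions---the second row of \eqref{interpolationmatrixGBC} mixes entries from different rows and needs the specific coefficient identities---so it is fortunate that you are not actually required to supply it here.
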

\begin{proof}
Let $\left(S(t)\right)_{t\geq 0}$ denote the semigroup generated by $G$ and $\left(T(t)\right)_{t\geq 0}$ denote the dual semigroup generated by $G^*$. Firstly, note that $\left(S(t)\right)_{t\geq 0}$ is a Feller semigroup on $C_0(\Omega)$ in view of Lemma \ref{remarkinterpolatingfunctionsandfellersemigroup}. Indeed, since $G$ is a bounded operator, $(S(t))_{t \geq 0 }$ is strongly continuous. The fact that $G_{n+1}(\lambda)$ is a transition rate matrix with non-positive row sums for each $\lambda \in [0,1]$ yields the fact $(S(t))_{t \geq 0 }$ is a contraction semigroup. Lastly, positivity follows in view of the linear version of Kamke's theorem, \cite[p. 124]{Arendt1986}; that is, $e^{tG_{n+1}}\geq 0 $ if and only if $g_{i,j}\geq 0 $ for $i \neq j$. The same argument yields the positivity of $(T(t))_{t \geq 0}$. Since $(S(t))_{t \geq 0 }$ is a contraction semigroup and for all $t\geq0$, $\left\|T(t)\right\|=\left\|S(t)\right\|$, we have that $\left(T(t)\right)_{t \geq 0}$ is a contraction semigroup on $\mathcal{M}_{\mathcal{B}}(\Omega)$. Next, note that $L_1[-1,1]$ is isometrically isomorphic to a closed subspace of  $\mathcal{M}_{\mathcal{B}}(\Omega)$. Hence, as $G^*$, and thus $\left(T(t)\right)_{t \geq 0}$ leaves $L_1[-1,1]$ invariant, the part of $G^*$ in $L_1[-1,1]$, which is the restriction of $G^*$ to $L_1[-1,1]$ by Proposition \ref{propositiontransitionoperatorG*}, is the generator of $\left(T(t)\Big|_{L_1 [-1,1]}\right)_{t \geq 0}$, see \cite[p. 43, 61]{Engel2000}.
\qquad \end{proof}

The method is now to let $n$ go to infinity and show convergence of the respective semigroups, which shows convergence of the respective well-understood processes. Note that the interpolated process $(\tilde X_t^n)_{t\ge 0}$ conditioned on $\tilde X_0^n=jh-1$ can be identified with the process $(X_t^n)_{t\ge 0}$ conditioned on $X_0^n=j$. In other words, the interpolation preserves the finite state process associated with the transition rate matrix on the grid points $jh-1$.

\section{Gr\"unwald-type approximations and the associated processes}
\label{sectionnumericalscheme}

In the following, let $1< \alpha \le 2$, $n \in \mathbb{N}$ and $h = \frac{2}{n+1}$. Further, let 
\begin{equation*}\mathcal{G}^\alpha_k = (-1)^k  {\binom{\alpha}{k}}, \;k \in \mathbb{Z},\end{equation*}
denote the \textit{Gr\"unwald coefficients}, which satisfy the following properties, see \cite{Oldham1974} for details:
 \begin{equation}\begin{split}
\label{grunwaldproperties} 
& \mathcal{G}_{n+1}^\alpha=\frac{n-\alpha}{n+1} \mathcal{G}_{n}^\alpha,\quad \mathcal{G}^\alpha_0 =1,\quad \mathcal{G}^\alpha_{-n} = 0\; \mathrm{for} \; n \in \mathbb{N}, \\
& \sum_{n=0}^\infty \mathcal{G}^\alpha_n =0, \quad \sum_{n=0}^k \mathcal{G}^\alpha_n =\mathcal{G}^{\alpha -1}_k, \quad \sum_{n=0}^{k} \mathcal{G}^q_{n} \mathcal{G}^Q_{k-n} = \mathcal{G}^{q+Q}_k  \\
& \mathrm{and} \; \mathcal{G}_n^\alpha=\frac{n^{-1-\alpha}}{\Gamma(-\alpha)} \big(1+O(n^{-1})\big), \quad \mathrm{as}\; n \to \infty. 
\end{split}
\end{equation}
In particular, for $1<\alpha\le2$, $\mathcal{G}_{1}^\alpha=-\alpha<0$ and $\mathcal{G}_{k\neq 1}^\alpha\ge 0$.

The shifted Gr\"unwald formula
\begin{equation}\label{Grunwald}D^\alpha_\pm f(x)=\lim_{h\to0}A_hf(x)=\lim_{h\to 0}\frac1{h^\alpha}\sum_{k=0}^\infty \mathcal{G}^\alpha_k f(x\pm(h-kh))\end{equation}
is well known and studied, see, for example, \cite{Tadjeran2006}. The operators $A_h$ are bounded Fourier multipliers and generate positive bounded contraction semigroups on $L_1(\R)$ and, by transference principle, on $C_0(\R)$ \cite{Baeumer2012a}. Extending the ideas of the previous section, dividing $\R$ up into grids of length $h$ and considering the space of two-sided sequences going to zero at infinity, $c_0(\mathbb Z)$, we define the projection  operator
$\Pi:C_0(\R)\to C([0,1);c_0(\mathbb Z))$ via 
$$\left(\Pi f\right)_j(\lambda)=f((j+\lambda)h)$$
where $j\in\mathbb Z$, $f\in C_0(\R)$, and $\lambda\in [0,1)$.  Then
$$A_hf=\Pi^{-1}G_h\Pi f$$ with
\begin{equation*}
G_h=
  \frac{1}{h^\alpha}\begin{pmatrix} 
     \ddots  & \ddots & \ddots & \ddots  &  \ddots& \ddots & \ddots  \\
     \ddots &  \mathcal{G}^\alpha_1 & \mathcal{G}^\alpha_2 & \ddots & \mathcal{G}^\alpha_{n-1}  & \mathcal{G}^\alpha_{n}  & \ddots   \\
     \ddots & \mathcal{G}^\alpha_0 & \mathcal{G}^\alpha_1 &    \ddots &\mathcal{G}^\alpha_{n-2} & \mathcal{G}^\alpha_{n-1}&  \ddots \\
     \ddots & 0&\mathcal{G}^\alpha_0&\ddots    & \ddots &\ddots & \ddots   \\
    \ddots  & \ddots&\ddots&\ddots&\mathcal{G}^\alpha_1&\mathcal{G}^\alpha_2& \ddots  \\
    \ddots & 0 &\ddots&0&\mathcal{G}^\alpha_0&\mathcal{G}^\alpha_1&  \ddots \\
    \ddots  & \ddots  & \ddots & \ddots  &\ddots & \ddots& \ddots 
  \end{pmatrix}.
\end{equation*}
Note that the entries of $G_h$ are negative on the main diagonal and positive everywhere else and that the row sums and column sums add to zero; i.e. $G_h$ is a transition rate matrix on $h\mathbb Z$, perfectly describing the stochastic processes $X^h_t$ with generators $A_h$. In particular, if $X^h_0=(j+\lambda)h$, then $X_t^h\in \lambda h+h\mathbb Z$ for all $t\ge 0$.  We call this process the \emph{Gr\"unwald process}.

Restricting this process to a finite domain is at the heart of this article. Philosophically, boundary conditions should only influence the process at the boundary; i.e., if the process moves across a boundary, it can be restarted somewhere (or killed). We will therefore restrict ourselves to finite state processes with transition rate matrix being the central square of $G_h$, where we can modify the first and last row and column to suit a particular boundary condition. 

For the six cases of Table \ref{explicitProcesses}; i.e., killing, fast-forwarding, or reflecting at the boundary, we modify the countable state transition matrix such that the resulting finite state process is obtained by killing, fast-forwarding, or reflecting at the respective boundary. Recall that the entry $g_{i,j}$ represents the rate at which particles move from state $i$ to state $j$.

\begin{itemize}
\item[$(\mathrm{DD})$] For the process to be killed at either boundary, nothing needs to be changed in the central square of $G_h$ to get $G_{n\times n}^{\mathrm{DD}}$:
\begin{equation*}
G_h^{\mathrm{DD}}=
  \frac{1}{h^\alpha}\begin{pmatrix} 
       \mathcal{G}^\alpha_1 & \mathcal{G}^\alpha_2 & \cdots & \mathcal{G}^\alpha_{n-1}  & \mathcal{G}^\alpha_{n}    \\
    \mathcal{G}^\alpha_0 & \mathcal{G}^\alpha_1 &    \ddots &\mathcal{G}^\alpha_{n-2} & \mathcal{G}^\alpha_{n-1} \\
     0&\mathcal{G}^\alpha_0&\ddots    & \ddots &\vdots    \\
    \vdots&\ddots&\ddots&\mathcal{G}^\alpha_1&\mathcal{G}^\alpha_2  \\
    0 &\cdots&0&\mathcal{G}^\alpha_0&\mathcal{G}^\alpha_1\\
  \end{pmatrix}.
\end{equation*}
This is because killing doesn't change the rate at which particles leave a state or jump to a certain state. It only changes the states available.
\item[$(\mathrm{N^*N})$] Restarting at the left and right boundary if it is crossed respectively (called reflecting in probability theory) necessitates that we change the first element and the last row. The only particles that jump across the left boundary are the ones that $X_t^h$ would move from state one to state zero. These particles are now restarted in state one, reducing the rate of particles leaving state one by the rate at which particles would have moved to state zero; i.e., $$g^{\mathrm{N^*R}}_{1,1}=  \mathcal{G}^\alpha_1+\mathcal{G}^\alpha_0=\mathcal{G}^{\alpha-1}_1.$$ 

On the right boundary we need to collect all the particles that would have jumped from a state to a state beyond the right boundary and add them to the rate at which particles arrive from that state; i.e., 
$$g^{\mathrm{LN}}_{i,n}=\mathcal{G}^\alpha_{n+1-i}+\sum_{k=n+2-i}^\infty \mathcal{G}^\alpha_k=-\mathcal{G}^{\alpha-1}_{n-i}$$ resulting in
\begin{equation*}
G_{n\times n}^{\mathrm{N^*N}}=
  \frac{1}{h^\alpha}\begin{pmatrix} 
       \mathcal{G}^{\alpha-1}_1 & \mathcal{G}^\alpha_2 & \cdots & \mathcal{G}^\alpha_{n-1}  & -\mathcal{G}^{\alpha-1}_{n-1}    \\
    \mathcal{G}^\alpha_0 & \mathcal{G}^\alpha_1 &    \ddots &\mathcal{G}^\alpha_{n-2} &  -\mathcal{G}^{\alpha-1}_{n-2} \\
     0&\mathcal{G}^\alpha_0&\ddots    & \vdots &\vdots    \\
    \vdots&\ddots&\ddots&\mathcal{G}^\alpha_1& -\mathcal{G}^{\alpha-1}_1  \\
    0 &\cdots&0&\mathcal{G}^\alpha_0& -\mathcal{G}^{\alpha-1}_0\\
  \end{pmatrix}.
\end{equation*}
In particular, note that the row sums are all zero, showing that the process is conservative.

\item[$(\mathrm{NR})$] Lastly we model the case where particles that move across the left boundary are immediately restarted at the place of first re-entry (fast-forward).  Theorem \ref{lambdaRlambda*} below shows that the probability of restarting in state $j$ is given by $ -\mathcal{G}^{\alpha-1}_{j}$ and as the rate of moving from state one to state zero is $\frac1{h^\alpha}\mathcal{G}^{\alpha}_0=1/h^\alpha$, 
$$g^{\mathrm{NR}}_{1,j}= \mathcal{G}^{\alpha}_{j}-\mathcal{G}^{\alpha-1}_{j}=-\mathcal{G}^{\alpha-1}_{j-1},$$ resulting in, for example, 
\begin{equation*}
G_{n\times n}^{\mathrm{NN}}=
  \frac{1}{h^\alpha}\begin{pmatrix} 
       -\mathcal{G}^{\alpha-1}_0 & -\mathcal{G}^{\alpha-1}_1 & \cdots & -\mathcal{G}^{\alpha-1}_{n-2}  & \mathcal{G}^{\alpha-2}_{n-2}    \\
    \mathcal{G}^\alpha_0 & \mathcal{G}^\alpha_1 &    \cdots &\mathcal{G}^\alpha_{n-2} & -\mathcal{G}^{\alpha-1}_{n-2} \\
     0&\mathcal{G}^\alpha_0&\ddots    & \vdots &\vdots    \\
    \vdots&\ddots&\ddots&\mathcal{G}^\alpha_1&-\mathcal{G}^{\alpha-1}_1 \\
    0 &\cdots&0&\mathcal{G}^\alpha_0& -\mathcal{G}^{\alpha-1}_0\\
  \end{pmatrix}.
\end{equation*}

\end{itemize}
Other combinations of boundary conditions are defined analogously and summarised in Table \ref{mainTable} using the generic matrix
\begin{equation}\label{Glr}
G_{n\times n}^{\mathrm{LR}}=
  \frac{1}{h^\alpha}\begin{pmatrix} 
       b_1^l & b_2^l & \cdots & b_{n-1}^l & b_n    \\
    \mathcal{G}^\alpha_0 & \mathcal{G}^\alpha_1 &    \cdots &\mathcal{G}^\alpha_{n-2} & b_{n-1}^r \\
     0&\mathcal{G}^\alpha_0&\ddots    & \vdots &\vdots    \\
    \vdots&\ddots&\ddots&\mathcal{G}^\alpha_1&b_2^r \\
    0 &\cdots&0&\mathcal{G}^\alpha_0& b_1^r\\
  \end{pmatrix}.
\end{equation}
\begin{table}
\centering
\vline
\begin{tabular}{c|c|c|c|}
  \hline
  Case  & $b^{l,r}$ &$D^{l,r}$& $N^{l,r}$ \\ 
  \hline
  $(\mathrm{DR})$ & $b_i^l=\mathcal G_i^\alpha$ & $D^l(\lambda)=\frac{ \lambda \alpha }{1-\lambda +\lambda \alpha}$ & $N^l=\mathbf{1}$\\
  $(\mathrm{NR})$ & $b_i^l=-\mathcal G_{i-1}^{\alpha-1}$ & $D^l=\mathbf{1}$ & $N^l(\lambda)=\lambda$\\
  $(\mathrm{N^*R})$ & $b_0^l=0,b_1^l=\mathcal G_{1}^{\alpha-1},b_i^l=\mathcal G_i^\alpha,i\ge2$ & $D^l=\mathbf{1}$ & $N^l(\lambda)=\lambda$\\
  $(\mathrm{LD})$ & $b_i^r=\mathcal G_i^\alpha$, $b_n=b_n^l$ & $D^r(\lambda)=\frac{ (1-\lambda) \alpha }{\lambda+  (1-\lambda )\alpha}$&$N^r=\mathbf{1}$\\
  $(\mathrm{LN})$ & $b_i^r=-\mathcal G_{i-1}^{\alpha-1}, b_n=-\sum_{i=0}^{n-1}b_i^l$& $D^r=\mathbf{1}$&$N^r(\lambda)=1-\lambda$\\
\hline
\end{tabular}
\caption{\label{mainTable}
Table of boundary weights and interpolating functions used to build the interpolation matrix \eqref{interpolationmatrixGBC} for the transition operators $G_{\pm h}^{\mathrm{LR}}$ of \eqref{G-hBC} and \eqref{G+hBC}.}
\end{table}

Armed with the transition rate matrices for the finite state processes we build the interpolation matrices introduced in \eqref{interpolationmatrixG}. That is, we need to define the functions $D^{l,r}$ and $N^{l,r}$ for the respective six cases and we choose them according to  Table \ref{mainTable} to obtain the interpolation matrices
\begin{equation}
\label{interpolationmatrixGBC}
  G^{\BC}_{n+1}(\lambda)=\frac1{h^\alpha}\begin{pmatrix}
    b_1^l& D^l(\lambda) b_2^l&\cdots& \cdots&D^l(\lambda)b_n & 0\\
    N^l(\lambda) \mathcal G_0^\alpha & \lambda \mathcal G_1^\alpha+\lambda'b_1^l &\cdots&\cdots&\lambda b^r_{n-1}+\lambda'b_{n-1}^l & N^r(\lambda) b_n\\
    0 & \mathcal G_0^\alpha & \cdots&\mathcal G_{n-3}^\alpha&\vdots & \vdots \\
    \vdots & \ddots &\ddots& \vdots &\vdots&\vdots\\
   \vdots &  &\ddots&\mathcal G_0^\alpha & \lambda b_1^r+\lambda'\mathcal G_1^\alpha & N^r(\lambda) b_2^r\\
    0 &\cdots &\cdots& 0& D^r(\lambda) \mathcal G_0^\alpha & b_1^r
  \end{pmatrix}
\end{equation}
with $\lambda'=1-\lambda$, leading to the transition operators
\begin{equation}\label{G-hBC}G_{-h}^{\BC}f(x)  = \left[ G_{n+1}^{\BC}(\lambda (x)) (\Pi_{n+1}f)(\lambda (x))\right]_{\iota (x)}
\end{equation}
on $C_0(\Omega)$ and
\begin{equation}\label{G+hBC}G_{+h}^{\BC}f(x)  = \left[ \left(G_{n+1}^{\BC}(\lambda (x))\right)^T (\Pi_{n+1}f)(\lambda (x))\right]_{\iota (x)}
\end{equation}
on $L_1[-1,1]$.

\subsection{Details of the case $(\mathrm{NR})$}

To determine the probability of a certain positive state being the first being visited by $X^h_t$ started at $X^h_0=0$, we consider the long term distribution of the stopped process starting at $0$; i.e., let $$\xi_+(t)=\min\{t:X^h_t>0\}$$ and consider
$$X^h_{\mathrm {stop}}(t)=\begin{cases} X^h_t&t<\xi_+(t)\\X^h_{\xi_+(t)}&t\ge\xi_+(t)\end{cases}.$$
Its generator (transition rate matrix) is given by turning the rates for leaving state $i$ for positive $i$ off: $$G_{\mathrm{stop}}=(g_{i,j})_{i,j\in\mathbb Z}\mbox{ with }g_{i,j}=\frac1{h^\alpha}\begin{cases}\mathcal G^\alpha_{j-i+1}&i\le 0\\0&i> 0.\end{cases}$$
By the Abelian Theorem for Laplace transforms \cite[Thm 4.1.2] {Arendt2001}the steady state of the stopped process starting at zero is given by 
$$\lim_{t\to \infty}e^{tG_{\mathrm{stop}}^*}\vec{e}_{0}=\lim_{\lambda\to 0+} \lambda(\lambda-G_{\mathrm{stop}}^*) ^{-1} \vec{e}_{0},$$ where $\vec{e}_{0}$ is the vector with $x_i=0$ for all $i\neq 0$ and $x_{0}=1$.

\begin{theorem}
The resolvent of $G^*_{\mathrm{stop}}$ evaluated at $\vec e_{0}$ is given by
\begin{equation}\label{resy*}
\left((\lambda I-G^*_{\mathrm{stop}})^{-1}\vec e_{0}\right)_n=\begin{cases}e^{(n-1)\psi^{-1}(\lambda)}&n\le 0\\
\frac1\lambda \sum_{k=n}^\infty \mathcal G_{k+1}^\alpha e^{-(k-n+1)\psi^{-1}(\lambda)} &n> 0\end{cases},
\end{equation}
where $$\psi(\lambda)=e^\lambda(1-e^{-\lambda})^\alpha=e^\lambda \sum_{k=0}^\infty (-1)^k\begin{pmatrix}\alpha\\k\end{pmatrix}e^{-\lambda k}= \sum_{k=0}^\infty \mathcal G^\alpha_ke^{(1-k)\lambda }.$$
\end{theorem}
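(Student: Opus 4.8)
The plan is to solve the resolvent equation $(\lambda I-G^*_{\mathrm{stop}})x=\vec e_0$ entry by entry and then confirm the candidate lies in $\ell_1(\mathbb Z)$. Since $g_{i,j}=0$ whenever $i>0$, the transpose $G^*_{\mathrm{stop}}$ has vanishing columns at every index $j>0$, so that $(G^*_{\mathrm{stop}}x)_n=\sum_{j\le0}\mathcal G^\alpha_{n-j+1}x_j$ depends only on the entries $(x_j)_{j\le0}$ (the prefactor $h^{-\alpha}$ merely rescales $\lambda$, so I take $h=1$ to match the statement). The system therefore decouples: the equations indexed by $n\le0$ close up into a half-line convolution problem in $(x_j)_{j\le0}$ alone, while for $n>0$ the equation $\lambda x_n=\sum_{j\le0}\mathcal G^\alpha_{n-j+1}x_j$ then reads off $x_n$ explicitly.

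For the half-line part I would try the exponential ansatz $x_j=Ce^{(j-1)\omega}$ and exploit the generating-function identity $\sum_{k\ge0}\mathcal G^\alpha_kz^k=(1-z)^\alpha$ at $z=e^{-\omega}$. The substitution $k=n-j+1$ collapses the convolution to $C\psi(\omega)e^{(n-1)\omega}$ with $\psi(\omega)=e^\omega(1-e^{-\omega})^\alpha$, exactly the symbol named in the statement. The homogeneous equations $n\le-1$ then force $\psi(\omega)=\lambda$, while the inhomogeneous equation at $n=0$ --- whose convolution is missing the term $\mathcal G^\alpha_0=1$ --- balances the unit source precisely when $C=1$. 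As $\psi$ restricts to an increasing bijection of $(0,\infty)$ onto $(0,\infty)$, for $\lambda>0$ I would take the branch $\omega=\psi^{-1}(\lambda)>0$, which is the one giving decay as $n\to-\infty$; this yields $x_n=e^{(n-1)\omega}$ for $n\le0$.

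Substituting these values into the equations for $n>0$ and applying the same reindexing gives $\lambda x_n=e^{n\omega}\sum_{k\ge n+1}\mathcal G^\alpha_ke^{-k\omega}$, and shifting $k\mapsto k+1$ turns this into $x_n=\tfrac1\lambda\sum_{k=n}^\infty\mathcal G^\alpha_{k+1}e^{-(k-n+1)\omega}$, which is exactly \eqref{resy*}.

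It remains to certify that this formal solution is the genuine resolvent. Because $\sum_k|\mathcal G^\alpha_k|=2\alpha<\infty$, the operator $G_{\mathrm{stop}}$ is bounded with spectrum in the closed left half-plane, so $\lambda>0$ is in the resolvent set and the $\ell_1(\mathbb Z)$ solution is unique; it thus suffices to check summability of the candidate. For $n\le0$ this is the geometric tail $\sum_{n\le0}e^{(n-1)\omega}<\infty$ (using $\omega>0$), and for $n>0$ a Fubini estimate bounds $\sum_{n>0}|x_n|$ by $\tfrac{1}{\lambda(e^\omega-1)}\sum_k|\mathcal G^\alpha_k|<\infty$. I expect the only real subtlety to be identifying the decaying branch of $\psi^{-1}$ and arguing uniqueness; once the symbol $(1-e^{-\omega})^\alpha$ is recognized, the convolution bookkeeping is routine.
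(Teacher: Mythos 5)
Your argument is correct and rests on the same computation as the paper's: the convolution against the Gr\"unwald weights collapses, via $\sum_{k\ge0}\mathcal G^\alpha_k z^k=(1-z)^\alpha$, to the symbol $\psi(\omega)=e^\omega(1-e^{-\omega})^\alpha$, and the paper simply runs this in the opposite direction, verifying that the stated candidate is annihilated by $\lambda I-G^*_{\mathrm{stop}}$ in coordinates $n\le-1$, produces $1$ at $n=0$, and trivially satisfies the equations for $n\ge1$. The one substantive addition in your write-up is the closing summability/uniqueness step (the candidate lies in $\ell_1$ and $\lambda>0$ lies in the resolvent set of the bounded rate-matrix generator), which the paper leaves implicit but which is what entitles one to call the formula \emph{the} resolvent; this is a tightening of the same proof rather than a different route.
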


\begin{proof}
First we show that $\psi$ is invertible by showing that it is increasing for $\lambda>0$. This follows from
$$\psi'(\lambda)=(1-e^{-\lambda})^{\alpha-1}\left(e^\lambda-1+\alpha\right)>0.$$

Next we apply  $\lambda I-G^*_{\mathrm{stop}}$ to our resolvent candidate and show that the result is indeed $\vec e_{0}$. Let $y$ be given by \eqref{resy*}. For $n\le -1$,
\begin{equation}\begin{split}\left((\lambda I-G_{\mathrm{stop}}^*)y\right)_n=&\lambda e^{(n-1)\psi^{-1}(\lambda)}-\sum_{k=0}^\infty \mathcal G_k^\alpha e^{(n-k)\psi^{-1}(\lambda)}\\
=&\lambda e^{(n-1)\psi^{-1}(\lambda)}-e^{(n-1)\psi^{-1}(\lambda)}\sum_{k=0}^\infty \mathcal G_k^\alpha e^{(1-k)\psi^{-1}(\lambda)}\\
=&\lambda e^{(n-1)\psi^{-1}(\lambda)}-e^{(n-1)\psi^{-1}(\lambda)}\psi(\psi^{-1}(\lambda))\\
=&0.
\end{split}\end{equation}
For $n=0$, 
\begin{equation}\begin{split}\left((\lambda I-G^*_{\mathrm{stop}})y\right)_{0}=&\lambda e^{-\psi^{-1}(\lambda)}-\sum_{k=1}^\infty \mathcal G_k^\alpha e^{-k\psi^{-1}(\lambda)}\\
=&\lambda e^{-\psi^{-1}(\lambda)}-e^{-\psi^{-1}(\lambda)}\sum_{k=0}^\infty \mathcal G_k^\alpha e^{(1-k)\psi^{-1}(\lambda)}+ \mathcal G_0^\alpha \\
=&\lambda e^{-\psi^{-1}(\lambda)}-e^{-\psi^{-1}(\lambda)}\psi(\psi^{-1}(\lambda))+1\\
=&1.
\end{split}\end{equation}
For $n\ge 1$, $\left((\lambda I-G^*_{\mathrm{stop}})y\right)_{n}=0$ by the  definition of $G^*_{\mathrm{stop}}$. Hence $(\lambda I-G^*_{\mathrm{stop}})y=\vec e_{0}$ and therefore $(\lambda I-G^*_{\mathrm{stop}})^{-1}\vec e_{0}=y$.
\end{proof}

\begin{corollary}\label{lambdaRlambda*} The steady state probability distribution of the stopped process started at zero is given by
$$\lim_{\lambda\to 0} \lambda(\lambda-G^*_{\mathrm{stop}}) ^{-1} \vec{e}_{0}=\vec z,$$
where $z_i=0$ for $i\le 0$ and $z_i=-\mathcal G^{\alpha-1}_{i}$ for $i> 0$.
\end{corollary}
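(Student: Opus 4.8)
The plan is to take the limit $\lambda\to0+$ in the resolvent formula \eqref{resy*} of the preceding theorem, coordinate by coordinate. The only analytic ingredients are the behaviour of $\psi^{-1}$ near $0$ and the summation identities for the Gr\"unwald coefficients collected in \eqref{grunwaldproperties}; the rest is bookkeeping.

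First I would pin down $\lim_{\lambda\to0+}\psi^{-1}(\lambda)$. Since $\psi(0)=\sum_{k=0}^\infty\mathcal G_k^\alpha=0$ by \eqref{grunwaldproperties}, and $\psi$ is strictly increasing on $(0,\infty)$ (established in the theorem via $\psi'>0$), its inverse satisfies $\psi^{-1}(\lambda)\downarrow 0$ as $\lambda\downarrow0$. In particular $\psi^{-1}(\lambda)>0$ for $\lambda>0$, which is what keeps the exponentials in \eqref{resy*} bounded by $1$.

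For the coordinates $n\le 0$, multiplying \eqref{resy*} by $\lambda$ gives $\lambda\,e^{(n-1)\psi^{-1}(\lambda)}$; the exponential tends to $e^0=1$ while $\lambda\to0$, so the product vanishes and $z_n=0$, as claimed. For $n>0$ the prefactor $1/\lambda$ cancels the $\lambda$, leaving $\sum_{k=n}^\infty\mathcal G_{k+1}^\alpha e^{-(k-n+1)\psi^{-1}(\lambda)}$, into which I would pass the limit termwise. This interchange is legitimate by dominated convergence: for $n\ge1$ every index satisfies $k+1\ge2$, so $\mathcal G_{k+1}^\alpha\ge0$ by the sign pattern of the Gr\"unwald coefficients noted after \eqref{grunwaldproperties}, and each factor $e^{-(k-n+1)\psi^{-1}(\lambda)}\le1$ because the exponent is negative; hence the terms are dominated by the summable sequence $\mathcal G_{k+1}^\alpha$. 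Since each exponential tends to $1$, this yields $z_n=\sum_{k=n}^\infty\mathcal G_{k+1}^\alpha=\sum_{j=n+1}^\infty\mathcal G_j^\alpha$.

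Finally I would evaluate this tail using the telescoping identity $\sum_{j=0}^k\mathcal G_j^\alpha=\mathcal G_k^{\alpha-1}$ of \eqref{grunwaldproperties}. Writing $\mathcal G_K^{\alpha-1}=\sum_{j=0}^K\mathcal G_j^\alpha\to\sum_{j=0}^\infty\mathcal G_j^\alpha=0$ as $K\to\infty$, I obtain $\sum_{j=n+1}^\infty\mathcal G_j^\alpha=\lim_{K\to\infty}\bigl(\mathcal G_K^{\alpha-1}-\mathcal G_n^{\alpha-1}\bigr)=-\mathcal G_n^{\alpha-1}$, which is exactly $z_n$. The single place demanding any care is the justification of the limit–sum interchange in the $n>0$ case; once the domination is in hand, everything else reduces to the algebraic identities of \eqref{grunwaldproperties}.
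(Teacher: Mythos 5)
Your proposal is correct and follows exactly the route of the paper's own proof: pass to the limit $\lambda\downarrow 0$ in the resolvent formula \eqref{resy*}, using $\psi^{-1}(\lambda)\to 0$ to kill the $n\le 0$ coordinates and the identity $\sum_{k=n}^{\infty}\mathcal G_{k+1}^{\alpha}=-\mathcal G_{n}^{\alpha-1}$ for $n>0$. The only difference is that you spell out the dominated-convergence justification for the limit--sum interchange and the telescoping evaluation of the tail, both of which the paper leaves implicit.
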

\begin{proof}As $\lambda\to 0$, $\psi^{-1}(\lambda)\to 0$ and hence 
$$\left(\lambda R(\lambda,G^*_{\mathrm{stop}})\vec e_{0}\right)_n\to 0$$ for all $n\le 0$. For $n> 0$,
$$\left(\lambda R(\lambda,G^*_{\mathrm{stop}})\vec e_{0}\right)_n\to \sum_{k=n}^\infty \mathcal G_{k+1}^\alpha=-\mathcal G_{n}^{\alpha-1}.$$
\quad
\end{proof}

\section{Convergence of the semigroups and processes}
\label{subsectiontrotterkatotheorem}
In this section we use the Trotter-Kato Theorem to show convergence of the semigroups generated by the transition operators $G_{-h}^{\mathrm{LR}}$ based on the interpolation matrices defined in Table \ref{mainTable}. This will imply convergence of the associated stochastic processes $X^h_t$. As the processes $X_t^h$ started at a grid point correspond to the modified Gr\"unwald process according to Table \ref{explicitProcesses}, and the modifications are continuous with respect to the Skorokhod metric, the limit processes are the processes of Table \ref{explicitProcesses}.

First we ensure that the transition operators in \eqref{G-hBC} and \eqref{G+hBC} are indeed suitable.
\begin{lemma}
\label{Ghdissipative}
The transition operators $G^{\mathrm{LR}}_{\mp h}$ defined in \eqref{G-hBC} and \eqref{G+hBC} via Table \ref{mainTable} are generators  of positive strongly continuous contraction semigroups on $C_0(\Omega)$ (i.e. Feller semigroups) or on $L_1[-1,1]$, respectively. 
\end{lemma}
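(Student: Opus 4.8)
The plan is to recognise $G^{\mathrm{LR}}_{\mp h}$ as instances of the interpolated transition operator of Section \ref{sectionmarkov} and then invoke Proposition \ref{tildeA}. Comparing \eqref{G-hBC} and \eqref{interpolationmatrixGBC} with Definition \ref{definitiontransitionoperatorG}, I read off that $G^{\mathrm{LR}}_{-h}$ is the transition operator $G$ built from the finite-state matrix $G^{\mathrm{LR}}_{n\times n}$ of \eqref{Glr} with interpolation matrix $G^{\mathrm{LR}}_{n+1}(\lambda)$, while \eqref{G+hBC} together with Proposition \ref{propositiontransitionoperatorG*} identifies $G^{\mathrm{LR}}_{+h}$ with $(G^{\mathrm{LR}}_{-h})^*\big|_{L_1[-1,1]}$. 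It therefore suffices to verify the three hypotheses of Proposition \ref{tildeA}: that $G^{\mathrm{LR}}_{n\times n}$ is a rate matrix with non-negative off-diagonal entries and non-positive row sums; that $G^{\mathrm{LR}}_{n+1}(\lambda)$ is a rate matrix for every $\lambda\in[0,1]$; and that $G^{\mathrm{LR}}_{-h}$ is bounded on $C_0(\Omega)$. The last point I would simply quote from Lemma \ref{remarkinterpolatingfunctionsandfellersemigroup}, noting that the functions $D^{l,r},N^{l,r}$ of Table \ref{mainTable} are continuous, $[0,1]$-valued, and attain the boundary values required in Definition \ref{definitiontransitionoperatorG}.

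For the finite-state matrix I would reduce every sign condition to the Gr\"unwald identities \eqref{grunwaldproperties} together with $\mathcal G^\alpha_1=-\alpha<0$ and $\mathcal G^\alpha_{k\neq1}\ge0$. Writing $\mathcal G^{\alpha-1}_k=\sum_{j=0}^k\mathcal G^\alpha_j$ gives $\mathcal G^{\alpha-1}_0=1$ and $\mathcal G^{\alpha-1}_k\le0$ for $k\ge1$; hence every off-diagonal weight ($\mathcal G^\alpha_k$, $k\ge2$, or $-\mathcal G^{\alpha-1}_k$, $k\ge1$) is non-negative and the diagonal weights $b^l_1,b^r_1$ are negative, so the off-diagonal condition holds (up to the positive prefactor $h^{-\alpha}$). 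For the row sums I would use the telescoping identity $\sum_{j=0}^m\mathcal G^\alpha_j=\mathcal G^{\alpha-1}_m$: a Neumann or Neumann${}^*$ weight $b^r_i=-\mathcal G^{\alpha-1}_{i-1}$ is exactly what cancels the truncation defect $\mathcal G^{\alpha-1}_m$ of its row, giving row sum $0$ (conservative, i.e.\ reflecting or fast-forwarding), whereas a Dirichlet edge leaves $\mathcal G^{\alpha-1}_m\le0$ uncancelled, giving a strictly negative row sum (killing).

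The genuinely delicate part --- and the step I expect to be the main obstacle --- is showing that $G^{\mathrm{LR}}_{n+1}(\lambda)$ of \eqref{interpolationmatrixGBC} is a rate matrix uniformly in $\lambda\in[0,1]$. Off-diagonal non-negativity is easy, as each off-diagonal entry is a $[0,1]$-multiple of a non-negative entry of $G^{\mathrm{LR}}_{n\times n}$, or a convex combination of two such entries. The row sums carry the work. I would first record that each interior row sum equals $(1-\lambda)$ times one row sum of $G^{\mathrm{LR}}_{n\times n}$ plus $\lambda$ times the next, corrected by $g_{I,1}\bigl(N^l(\lambda)-\lambda\bigr)+g_{I-1,n}\bigl(N^r(\lambda)-(1-\lambda)\bigr)$. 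At a non-Dirichlet edge the choices $N^l(\lambda)=\lambda$, $N^r(\lambda)=1-\lambda$ annihilate the correction, leaving a convex combination of non-positive numbers. At a Dirichlet edge $N^{l,r}=\mathbf 1$, and the surviving positive correction is exactly matched by the killing defect of the neighbouring finite-state row sum; the recurrence $\mathcal G^{\alpha-1}_m=\mathcal G^{\alpha-1}_{m-1}+\mathcal G^\alpha_m$ then collapses each such interpolated row sum to a single non-positive Gr\"unwald value. Finally, the one boundary row carrying the factor $D^{l,r}$ has sum $\mathcal G^\alpha_1+D^l(\lambda)\sum_{k=2}^{n}\mathcal G^\alpha_k$, and since $\sum_{k=2}^n\mathcal G^\alpha_k=\alpha-1+\mathcal G^{\alpha-1}_n\le\alpha-1$ while $D^l(\lambda)\in[0,1]$, this is at most $-\alpha+(\alpha-1)=-1<0$.

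Once all three hypotheses are in hand, Proposition \ref{tildeA} yields at once that $G^{\mathrm{LR}}_{-h}$ generates a Feller semigroup on $C_0(\Omega)$ and that $G^{\mathrm{LR}}_{+h}=(G^{\mathrm{LR}}_{-h})^*\big|_{L_1[-1,1]}$ generates a strongly continuous positive contraction semigroup on $L_1[-1,1]$, which is the claim. The one real difficulty is the $\lambda$-uniform row-sum control at the two boundary rows: the interior and Neumann rows are routine convex combinations, but each Dirichlet edge must be handled individually with the specific interpolating function of Table \ref{mainTable} and the exact Gr\"unwald partial-sum identities, so the whole argument amounts to a short finite case analysis over the left and right edges of the six boundary combinations.
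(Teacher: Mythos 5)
Your proposal is correct and follows essentially the same route as the paper: reduce everything to Proposition \ref{tildeA} by verifying that the interpolation matrices $G^{\BC}_{n+1}(\lambda)$ of \eqref{interpolationmatrixGBC} are rate matrices for every $\lambda\in[0,1]$, the only real work being the non-positivity of the row sums that mix left- and right-boundary weights. The paper's proof is terser --- it writes out only the second row sum $S_2^{\BC}$ explicitly and declares the remaining rows straightforward --- whereas your convex-combination-plus-correction formula and the telescoping identity $\sum_{j=0}^m\mathcal G^\alpha_j=\mathcal G^{\alpha-1}_m$ systematize exactly the same cancellations (e.g.\ reproducing $S_2^{\mathrm{ND}}=-(1-\lambda)\mathcal G^{\alpha-2}_{n-1}$), so the two arguments coincide in substance.
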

\begin{proof}
 In view of Proposition \ref{tildeA} all we have to show is that the interpolation matrices $G_{n+1}^{\BC}(\lambda)$ of \eqref{interpolationmatrixGBC} are rate matrices; i.e., that for each $\lambda$ the row sums are not positive. This is straight forward for each row, except maybe the second row which adds to
 $$S_2^{\mathrm{LR}}=N^l(\lambda)\mathcal G_0^\alpha+\lambda\sum_{k=1}^{n-2}\mathcal G_k^\alpha+\lambda b_{n-1}^r+(1-\lambda)\sum_{k=1}^{n-1}b_k^l+N^r(\lambda)b_n.$$
 In case of $(\mathrm{LD})$ substituting $b_n$ and $b^r_i$ from Table \ref{mainTable}, this simplifies to 
 $$S_2^{\mathrm{LD}}=N^l(\lambda)\mathcal G_0^\alpha+\lambda\sum_{k=1}^{n-1}\mathcal G_k^\alpha+(1-\lambda)\sum_{k=1}^{n-1}b_k^l+b_n^l$$ which further reduces for $(\mathrm{ND})$ to
 $$S_2^{\mathrm{ND}}=\lambda\sum_{k=0}^{n-1}\mathcal G_k^\alpha-(1-\lambda)\sum_{k=1}^{n-1}\mathcal G_{k-1}^{\alpha-1}-\mathcal G_{n-1}^{\alpha-1}=-(1-\lambda)\mathcal G_{n-1}^{\alpha-2}<0$$
  and it is easily verified that for the other two left boundary conditions  $S_2< 0$ as well.
 
 In case of $(\mathrm{LN})$ the second row sum simplifies to
 $$S_2^{\mathrm{LN}}=N^l(\lambda)\mathcal G_0^\alpha+\lambda\sum_{k=1}^{n-2}\mathcal G_k^\alpha-\lambda \mathcal G^{\alpha-1}_{n-2}-(1-\lambda)b_0^l=0.$$

Therefore $G_{n+1}(\lambda)$ is a rate matrix for all $0\le\lambda\le1$ and by  Proposition \ref{tildeA} the proof is complete.
\end{proof}

Next we show that the operators converge; i.e. that $G_{-h}^{\mathrm{LR}}\to (A^-,\mathrm{LR})$ and  $G_{+h}^{\mathrm{LR}}\to (A^+,\mathrm{LR})$ for the twelve cases in Table \ref{explicitdomains}. In particular, we show that for each $f\in\D(A,\mathrm{LR})$ there exists $f_h$ such that $f_h\to f$ and $G_{\pm h}^{\mathrm{LR}}f_h\to A^\pm f $. 
\begin{proposition}
\label{mainproposition}
Let $(A,\BC)$ be one of the operators of Table \ref{explicitdomains} with domain $\mathcal D(A,\mathrm{LR})$. For each $f\in \mathcal D(A,\mathrm{LR})$ there exists $f_h\in X$ such that $f_h\to f$ and $G_{-h}^{\BC}f_h\to Af$ if $X=C_0(\Omega)$ or $G_{+h}^{\BC}f_h\to Af$ if $X=L_1[-1,1]$.
\end{proposition}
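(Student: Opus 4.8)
The plan is to reduce the statement to a convergence computation on the power functions that build up the core, and then to isolate and control the boundary corrections separately from the ``bulk'' Gr\"unwald approximation. Since both $G^{\BC}_{\pm h}$ and $A$ are linear and, by Theorem~\ref{theoremCore}, the fractional polynomials $\mathcal C(A,\BC)$ form a core, it suffices to produce approximating sequences $f_h$ for core elements $f = I^\alpha P + a p_\alpha + b p_{\alpha-1} + c p_\eta$ (with $P$ a polynomial and $a,b,c$ fixed by the boundary conditions): a diagonal argument over a core-approximating sequence $\phi_k \to f$ with $A\phi_k \to Af$ then upgrades the result to arbitrary $f \in \D(A,\BC)$. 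Each such core element is a finite linear combination of power functions $p_\beta$ with $\beta \in \{\alpha-2,\alpha-1,\alpha,\alpha+1,\dots\}$, all integrable since $\alpha>1$. I would take $f_h = f$, which is legitimate because $G^{\BC}_{\pm h}f$ depends only on the grid samples of $f$; the only exception is near a boundary where $f$ is unbounded (the $p_{\alpha-2}$ term of the Riemann--Liouville/Neumann$^*$ cases), where I would replace the diverging boundary grid value by a regularized one still satisfying $f_h\to f$ in $X$.

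The next step is the bulk estimate. On grid cells away from the boundary the matrix \eqref{interpolationmatrixGBC} reduces to the plain shifted Gr\"unwald stencil $\tfrac{1}{h^\alpha}\mathcal G^\alpha_k$, so $G^{\BC}_{\pm h}f_h$ there is exactly the shifted Gr\"unwald operator \eqref{Grunwald} applied to $f$. Its convergence to $D^\alpha_\pm p_\beta = p_{\beta-\alpha}$ on power functions is the known whole-line result \cite{Baeumer2012a,Tadjeran2006}, which I would re-derive directly using the summation and convolution identities in \eqref{grunwaldproperties} together with the tail asymptotics $\mathcal G^\alpha_n \sim n^{-1-\alpha}/\Gamma(-\alpha)$ to dominate the truncation error uniformly in $x$ bounded away from the relevant boundary.

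Finally come the boundary corrections. The difference between $G^{\BC}_{\pm h}$ and the restricted Gr\"unwald operator lives only in the first and last rows and columns, where the weights $b^{l},b^{r}$ and the interpolating functions $D^{l,r},N^{l,r}$ of Table~\ref{mainTable} enter. Here I would exploit the fact that core elements satisfy the boundary conditions: on a Dirichlet side, $f(\mp1)=0$ forces the dropped out-of-domain contribution to vanish; on a Neumann (fast-forward) or Neumann$^*$ (reflecting) side the reassigned weights $-\mathcal G^{\alpha-1}$, collapsed through $\sum_{k=0}^{m}\mathcal G^\alpha_k=\mathcal G^{\alpha-1}_m$, combine with the truncated tail to supply exactly the $f(\mp1)p_{\alpha-1}$ discrepancy relating $\partial^\alpha_\pm$ to $D^\alpha_\pm$ in \eqref{derivativeconvolution}, together with the correct value $g+ap_0=Af$ recorded in \eqref{specialpDalpha}. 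Each correction is then shown to be $o(1)$ in $X$ by the same coefficient asymptotics. The $L_1$ (forward) cases follow from the $C_0$ (backward) cases by transposition, since $G^{\BC}_{+h}$ is the transpose of $G^{\BC}_{-h}$.

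The step I expect to be the main obstacle is the singular $p_{\alpha-2}$ term (and the unbounded derivative of $p_{\alpha-1}$) at a Neumann-type boundary in the Riemann--Liouville/Neumann$^*$ cases. There the naive grid sampling diverges, so the regularization of $f_h$ must be matched precisely to the engineered boundary weights of Table~\ref{mainTable} so that the reassigned-mass term converges to the finite limit dictated by \eqref{specialpDalpha} rather than blowing up; verifying this requires a careful case-by-case analysis of the second row sum (as already previewed in the computation of $S_2^{\mathrm{LR}}$ in Lemma~\ref{Ghdissipative}) against the precise asymptotics of $\mathcal G^\alpha_n$.
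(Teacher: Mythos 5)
There is a genuine gap, and it sits exactly where you placed your confidence rather than where you placed your worry. Your plan takes $f_h=f$ on the grounds that $G^{\BC}_{\pm h}f$ only sees grid samples and that the bulk stencil is the plain shifted Gr\"unwald operator, whose convergence on power functions is ``the known whole-line result.'' But the whole-line result does not apply to the truncated power functions $p_\beta$ that carry the boundary data in the core decomposition $f=I^\alpha P+ap_\alpha+bp_{\alpha-1}+cp_\eta$: as recorded in \eqref{grunwaldforboundeddomain1}, the shifted Gr\"unwald approximation of $p_\beta$ converges in $C_0(\Omega)$ only for $\beta>\alpha$ and in $L_1[-1,1]$ only for $\beta>\alpha-1$. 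For the relevant exponents $\beta\in\{\alpha,\alpha-1,\alpha-2,0\}$ it fails in the respective norms, and the failure is not localized to the first and last rows of the matrix: it is driven by the interaction of the long-range stencil with the algebraic singularity (or lack of smoothness) of $p_\beta$ at the left endpoint, so the error persists on interior grids as $h\to0$. This is precisely why the paper does \emph{not} take $f_h=f$ but instead constructs the approximate power functions $\vartheta^\beta_h$ of Definition \ref{varthetafunctions}, built from the Gr\"unwald coefficients $\mathcal G^{-\beta-1}_k$ themselves so that $G^{\BC}_{\pm h}\vartheta^\beta_{\pm h}=0$ (or $=1$ for $\beta=\alpha$) \emph{exactly} on interior grids while $\vartheta^\beta_h\to p_\beta$ in $X$ (Lemma \ref{varthetaconvergence}); the sequences $f_h$ in Tables \ref{fhexplicit1} and \ref{fhexplicitC} replace each $p_\beta$ by its discrete counterpart and add small correction terms $\epsilon^{\BC}_h$. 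Your regularization of the single diverging grid value of $p_{\alpha-2}$ does not repair this, because even the bounded, continuous functions $p_\alpha$ and $p_{\alpha-1}$ are mishandled by direct sampling.

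A secondary problem: you dispose of the $L_1$ cases ``by transposition.'' Convergence of $G^{\BC}_{-h}f_h\to A^-f$ on a core of $C_0(\Omega)$ does not transfer by duality to convergence of $G^{\BC}_{+h}$ on a core of the (different) domain $\D(A^+,\BC)$ in $L_1$; the two families of core elements have different coefficient tables, and the paper carries out separate explicit computations for each space (Tables \ref{ErrorTableL1} and \ref{ErrorTableC}), reserving duality for the later identification of adjoints in Corollary \ref{adjointfractionalderivatives}. Your reduction to the core and your qualitative picture of how the reassigned boundary weights $-\mathcal G^{\alpha-1}$ collapse via $\sum_{k=0}^m\mathcal G^\alpha_k=\mathcal G^{\alpha-1}_m$ are both sound and match the paper, but without the discrete power functions the argument does not go through.
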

\begin{proof}
Note that is is enough to show this property for each element in the core $\mathcal C(A , \BC )$ given by \eqref{coreA}. 
As $G_h^{\BC}p_\beta$ for $\beta\in\{\alpha,\alpha-1,\alpha-2\}$ does not converge in general we give an explicit sequence for each of the twelve cases. See Section \ref{sectionmainproposition} for details.
\end{proof}

\begin{theorem}[\bf Trotter-Kato type approximation theorem]
\label{trotterkatotypetheorem} The operators of Table \ref{explicitdomains} generate
 positive, strongly continuous, contraction semigroups on $X$. Moreover, the semigroups generated by $G_{-h}^{\BC}$ converge strongly (and uniformly for $t\in[0,t_0]$) to the semigroup generated by $(A^-, \BC)$ on $C_0(\Omega)$ and the semigroups generated by $G_{+h}^{ \BC}$ converge strongly (and uniformly for $t\in[0,t_0]$) to the semigroup generated by $(A^+, \BC)$ on $L_1[-1,1]$.
\end{theorem}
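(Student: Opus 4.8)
The plan is to invoke the Trotter--Kato approximation theorem \cite[p.~209]{Engel2000} in the form that establishes both well-posedness and convergence simultaneously. Recall that the theorem states: given a sequence of generators of uniformly bounded, strongly continuous semigroups on $X$, if there is a densely defined operator $(A,\mathcal D(A))$ such that (i) for each $f$ in a core of $A$ there exist approximants $f_h \to f$ with $G^{\BC}_{\mp h} f_h \to Af$, and (ii) the range of $I-A$ (or $\lambda I - A$ for some $\lambda>0$) is dense in $X$, then $A$ is closable, its closure generates a strongly continuous contraction semigroup, and the approximating semigroups converge strongly, uniformly on compact $t$-intervals, to the limit semigroup. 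My strategy is therefore to assemble the hypotheses of this theorem from results already proved in the excerpt.

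First I would record that each $G^{\BC}_{\mp h}$ generates a positive, strongly continuous contraction semigroup on the appropriate space $X\in\{C_0(\Omega),L_1[-1,1]\}$; this is precisely the content of Lemma \ref{Ghdissipative}. Because these are genuine contraction semigroups, the family is uniformly bounded (indeed uniformly contractive), so the stability hypothesis of Trotter--Kato holds automatically. Second, I would supply the consistency/convergence hypothesis: for each $f$ in the core $\mathcal C(A,\BC)$ of Theorem \ref{theoremCore}, Proposition \ref{mainproposition} furnishes $f_h\to f$ with $G^{\BC}_{\pm h}f_h\to Af$. Since $\mathcal C(A,\BC)$ is a core by Theorem \ref{theoremCore}, verifying convergence on the core suffices. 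Third, I would supply the density-of-range condition: Theorem \ref{denseRangeIminusA} shows $\operatorname{rg}(I-A)$ is dense in $X$ for each of the twelve operators. These three ingredients are exactly the abstract hypotheses of the Trotter--Kato theorem.

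From the theorem I would conclude two things at once. The density of $\operatorname{rg}(I-A)$ together with dissipativity gives, via Lumer--Phillips, that (the closure of) $A$ generates a strongly continuous contraction semigroup; and here $A$ is already closed by Theorem \ref{theoremClosedA} and densely defined by Theorem \ref{theoremAdenseA}, so no passage to a closure is needed and $A$ itself is the generator. The positivity of the limit semigroup then follows because each approximating semigroup is positive (Lemma \ref{Ghdissipative}) and strong limits of positive operators are positive. The convergence conclusion of Trotter--Kato delivers the remaining assertions: the semigroups generated by $G^{\BC}_{-h}$ converge strongly and uniformly for $t\in[0,t_0]$ to the semigroup generated by $(A^-,\BC)$ on $C_0(\Omega)$, and likewise $G^{\BC}_{+h}$ to $(A^+,\BC)$ on $L_1[-1,1]$.

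I expect the proof at this stage to be essentially a bookkeeping assembly of prior results, so the genuine obstacle has already been isolated and deferred: it lives inside Proposition \ref{mainproposition}, whose verification is relegated to Section \ref{sectionmainproposition}. The difficulty there is that $G^{\BC}_{\mp h}p_\beta$ does \emph{not} converge for the relevant fractional powers $\beta\in\{\alpha,\alpha-1,\alpha-2\}$, so one cannot simply apply the discrete operator to the natural fractional-polynomial building blocks of the core; instead one must hand-craft an approximating sequence $f_h$ for each of the twelve boundary-condition cases, exploiting the Gr\"unwald-coefficient identities in \eqref{grunwaldproperties} and the boundary modifications encoded in Table \ref{mainTable} so that the boundary-row contributions telescope correctly in the limit. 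Once Proposition \ref{mainproposition} is granted, the present theorem is immediate from Trotter--Kato.
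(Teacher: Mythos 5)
Your proposal is correct and follows essentially the same route as the paper: it assembles Lemma \ref{Ghdissipative}, Proposition \ref{mainproposition}, and Theorems \ref{theoremAdenseA}, \ref{theoremClosedA}, \ref{theoremCore} and \ref{denseRangeIminusA} into the hypotheses of the Trotter--Kato theorem, exactly as the paper does. The only cosmetic difference is that the paper first passes the dissipativity estimate $\|(\lambda-G^{\BC}_{\mp h})f_h\|\ge\lambda\|f_h\|$ to the limit to obtain dissipativity of $(A,\BC)$ and then invokes Lumer--Phillips explicitly for generation, reserving Trotter--Kato for the convergence and positivity statements, whereas you let the generation conclusion come packaged inside the Trotter--Kato theorem itself; both are valid.
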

\begin{proof}
By Proposition \ref{mainproposition}, for each $f\in \mathcal D(A,\mathrm{LR})$ there exist sequences $\left\{f_h\right\} \subset C_0(\Omega)$ and $\left\{f_h\right\} \subset L_1[0,1]$ such that $f_h\to f$ and $G^h f_h \to Af$ in the respective norms. In view of Lemma \ref{Ghdissipative}, $G^h$ are dissipative; that is, $\|(\lambda-G^h)f_h\|\ge \lambda \|f_h\|$ for all $f_h\in X$ and all $\lambda>0$. Thus, as $h \to 0$, in view of Proposition \ref{mainproposition} we have $\|(\lambda-A)f\|\ge \lambda \|f\|$ for all $f\in \mathcal D(A,\mathrm{LR})$ in the respective $X$-norms and hence $(A, \BC)$ are dissipative. Furthermore, in view of Theorems \ref{theoremAdenseA}, \ref{theoremClosedA} and \ref{denseRangeIminusA}, $(A, \BC)$ are densely defined closed operators with dense $\mathrm{rg}(\lambda -A)$. Hence, the operators $(A, \BC)$ generate strongly continuous contraction semigroups as a consequence of the Lumer-Phillips Theorem. The second statement and the positivity of the semigroups generated by  $(A, \BC)$ follow using the Trotter-Kato Theorem in view of Proposition \ref{mainproposition} and Lemma \ref{Ghdissipative}.
\qquad \end{proof} 
\begin{corollary}
\label{adjointfractionalderivatives}
Let $(A^+, \BC)$ on $L_1[-1,1]$ and $(A^-, \BC)$ on $C_0(\Omega)$ be as in Table \ref{explicitdomains}. Then, $(A^-, \BC) = (A^+, \BC)^*\Big|_{C_0(\Omega)}$ and $(A^+, \BC) = (A^-, \BC)^*\Big|_{L_1[-1,1]}$.
\end{corollary}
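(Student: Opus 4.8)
The plan is to leverage the finite-state approximation machinery of Sections 4 and 5 together with the convergence established in Theorem \ref{trotterkatotypetheorem}, rather than attempting a direct integration-by-parts computation with the fractional derivatives, which would require careful tracking of boundary terms arising from \eqref{derivativeconvolution}. First I would recall that by construction the operators $G_{-h}^{\BC}$ on $C_0(\Omega)$ and $G_{+h}^{\BC}$ on $L_1[-1,1]$ are built from the \emph{same} interpolation matrix $G_{n+1}^{\BC}(\lambda)$ of \eqref{interpolationmatrixGBC}: the backward operator uses $G_{n+1}^{\BC}$ and the forward operator uses its transpose, exactly as in \eqref{G-hBC} and \eqref{G+hBC}. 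By the duality pairing computed just before Proposition \ref{propositiontransitionoperatorG*}, namely $\int_{-1}^1 (G_{-h}^{\BC}f)(x)\,g(x)\,\dd x = \int_{-1}^1 f(x)\,(G_{+h}^{\BC}g)(x)\,\dd x$, the operator $G_{+h}^{\BC}$ is precisely the $L_1$-restriction of the adjoint of $G_{-h}^{\BC}$ for every $h$. Consequently the generated semigroups are mutually adjoint: $T_{+h}(t) = S_{-h}(t)^*\big|_{L_1[-1,1]}$.

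Next I would pass to the limit. By Theorem \ref{trotterkatotypetheorem} the semigroups $S_{-h}(t)$ converge strongly (uniformly on compact time intervals) to the semigroup $S(t)$ generated by $(A^-,\BC)$ on $C_0(\Omega)$, and $T_{+h}(t)$ converge strongly to the semigroup $T(t)$ generated by $(A^+,\BC)$ on $L_1[-1,1]$. The key point is that strong convergence of the $C_0(\Omega)$-semigroups, together with the uniform contraction bound, yields weak$^*$ convergence of the adjoints $S_{-h}(t)^*$; combined with the strong $L_1$-convergence of $T_{+h}(t)$ this forces the limit $T(t)$ to coincide with the restriction to $L_1[-1,1]$ of $S(t)^*$. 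Once the semigroups satisfy $T(t) = S(t)^*\big|_{L_1[-1,1]}$, I would invoke the standard correspondence between adjoint semigroups and their generators, exactly the sun-dual/part-of-the-adjoint theory of \cite[p.~43, 61]{Engel2000} already used in Proposition \ref{tildeA}, to conclude that the generator of $T(t)$ is the part of the generator of $S(t)^*$ in $L_1[-1,1]$; that is, $(A^+,\BC) = (A^-,\BC)^*\big|_{L_1[-1,1]}$. The reverse identity $(A^-,\BC) = (A^+,\BC)^*\big|_{C_0(\Omega)}$ follows by the symmetric argument, using that $C_0(\Omega)$ embeds into the dual of $L_1[-1,1]$ and that both operators generate the mutually adjoint semigroups.

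The main obstacle I anticipate is the functional-analytic subtlety at the level of generators rather than semigroups. On reflexive spaces the adjoint of a strongly continuous semigroup is again strongly continuous and the generator is simply the adjoint of the generator, but $L_1[-1,1]$ and $C_0(\Omega)$ are not reflexive, so $S(t)^*$ need not be strongly continuous on all of the dual space and one must restrict to the subspace of strong continuity (the sun-dual). The care required is to verify that $L_1[-1,1]$ sits inside the sun-dual of $C_0(\Omega)$ and that the part of $A^*$ there is exactly the densely defined closed operator $(A^+,\BC)$ identified in Table \ref{explicitdomains}; this is precisely where the explicit identification of domains in Proposition \ref{equivalentdefinitions} and the closedness from Theorem \ref{theoremClosedA} are needed, so that the abstractly obtained generator is pinned down to the concrete fractional operator. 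The finite-state adjoint identity $G_{+h} = (G_{-h})^*\big|_{L_1}$ makes this transparent at each level $h$, and the convergence theorem transports it cleanly to the limit.
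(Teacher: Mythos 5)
Your proposal is correct, but it proves the corollary by a genuinely different route than the paper. The paper's proof works directly at the level of the limit operators: for $\phi_0\in\D(A^-,\BC)$ and $\phi_1\in\D(A^+,\BC)$ written in the explicit form $I^\alpha g+ap_\alpha+bp_{\alpha-1}+cp_\eta$ of Table \ref{explicitdomains}, it computes $\Delta=\int_{-1}^1\phi_0\,A^+\phi_1-\int_{-1}^1\phi_1\,A^-\phi_0$ using the exact duality $\int fI^\alpha_+g=\int gI^\alpha_-f$ and $\int p^\pm_\beta f=I^{\beta+1}_\mp f(\mp1)$, and checks case by case that the boundary coefficients make $\Delta=0$; this yields the inclusions $(A^\pm,\BC)\subset(A^\mp,\BC)^*$, which are then upgraded to equalities using $1\in\rho(A^\pm)$ (from Theorem \ref{trotterkatotypetheorem}) and the elementary fact that $T\subset S$ with $T$ surjective and $S$ injective forces $T=S$. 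You instead exploit that the discrete adjoint relation $G_{+h}^{\BC}=\bigl(G_{-h}^{\BC}\bigr)^*\big|_{L_1[-1,1]}$ is exact by construction (it is just the transpose of the interpolation matrix, as recorded before Proposition \ref{propositiontransitionoperatorG*}), transport it to the semigroup level, pass to the limit via Theorem \ref{trotterkatotypetheorem} and uniqueness of weak$^*$ limits, and then invoke the sun-dual/part-of-the-adjoint correspondence to descend from semigroups to generators. What each approach buys: the paper's computation is self-contained and makes the fractional integration-by-parts and boundary-term cancellation explicit, at the cost of a six-case verification; your argument eliminates all boundary bookkeeping because transposition is exact at every $h$, at the cost of the non-reflexive duality subtleties (strong continuity of the adjoint semigroup only on the sun-dual), which you correctly identify and which are handled by the same reference \cite[p.~43, 61]{Engel2000} the paper already uses in Proposition \ref{tildeA}. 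Both proofs ultimately lean on Theorem \ref{trotterkatotypetheorem} — the paper for $1\in\rho(A^\pm)$, you for identifying the limit semigroups' generators with the concrete operators of Table \ref{explicitdomains} — so neither is circular.
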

Recall that for a linear operator $A:\mathcal D(A) \subset X\to X$ and a subspace $Y\subset X$, we denote by $A\Big|_Y$ the part of $A$ in $Y$.
\begin{proof}
Given the explicit representation of the respective domains we first show that the operators satisfy $(A^\pm,\BC)\subset (A^\mp,\BC))^*$; i.e. we show that for $\phi_1\in \D(A^+,\BC)$ and $\phi_0\in\D(A^-,\BC)$,
$$\Delta:=\int_{-1}^1 \phi_0(x)A^+\phi_1(x)\,dx-\int_{-1}^1\phi_1(x)A^-\phi_0(x)\,dx=0.$$ 

Let $$\phi_0=I^\alpha_- g_0+a_0p^-_\alpha+b_0p^-_{\alpha-1}+c_0p_0\in \D(A^-,\BC)$$ and
$$\phi_1=I^\alpha_+ g_1+a_1p^+_\alpha+b_1p^+_{\alpha-1}+c_1p^+_\eta\in \D(A^+,\BC)$$
for one of the six cases of Table \ref{explicitdomains}.
As $\int_{-1}^1 f I^\alpha_+ g= \int_{-1}^1gI^\alpha_-f$ and $\int_{-1}^1 p_{\beta}^\pm f=I^{\beta+1}_\mp f(\mp 1)$, 
\begin{equation}\begin{split}\Delta=&\int_{-1}^1\phi_0(x)\left(g_1(x)+a_1\right)-\phi_1(x)\left(g_0(x)+a_0\right)\,dx\\
=&\int_{-1}^1 \left(I^{\alpha}_-g_0+a_0p^-_\alpha+b_0p^-_{\alpha-1}+c_0p_0\right)a_1\,dx\\&-\int_{-1}^1\left(I_+^\alpha g_1+a_1p^+_\alpha+b_1p^+_{\alpha-1}+c_1p^+_\eta\right)a_0\,dx\\
&+\int_{-1}^1 \left(a_0p^-_\alpha+b_0p^-_{\alpha-1}+c_0p_0\right)g_1(x)-\left(a_1p^+_\alpha+b_1p^+_{\alpha-1}+c_1p^+_\eta\right)g_0(x)\,dx\\
=&a_1I^{\alpha+1}_-g_0(-1)-a_0I^{\alpha+1}_+g_1(1)+b_0a_1p_\alpha^-(-1)-b_1a_0p_\alpha^+(1)+c_0a_1p_1^-(-1)\\&-c_1a_0p_{\eta+1}(1)+a_0I^{\alpha+1}_+g_1(1)+b_0I^{\alpha}_+g_1(1)+c_0Ig_1(1)\\&-a_1I^{\alpha+1}_-g_0(-1)-b_1I^\alpha_-g_0(-1)-c_1I^{\eta+1}_-g_0(-1)\\
=&b_0\left(I^\alpha_+g_1(1)+a_1p_\alpha^-(-1)\right)+c_0\left(Ig_1(1)+p_1^-(-1)a_1\right)\\
&-b_1\left(I^\alpha_-g_0(-1)+a_0p_\alpha^+(1)\right)-c_1\left(I_-^{\eta+1}g_0(-1)+a_0p_{\eta+1}(1)\right).
\end{split}
\end{equation}
Then for each of the six cases of Table \ref{explicitdomains}  one can verify that $\Delta=0$. Thus, $(A^-, \BC) \subset (A^+, \BC)^*\Big|_{C_0(\Omega)} $ and $ (A^+, \BC) \subset (A^-, \BC)^*\Big|_{L_1[-1,1]}$. 

As a consequence, 
$$I - (A^-, \BC) \subset \big( I- (A^+, \BC)^*\big)\Big|_{C_0(\Omega)}$$ and  $$ I- (A^+, \BC) \subset \big(I- (A^-, \BC)^*\big)\Big|_{L_1[-1,1]},$$ 
where the identity operator on the respective spaces is denoted by $I$. Moreover, in view of Theorem \ref{trotterkatotypetheorem}, the operators $(A^+, \BC)$ and $(A^-, \BC)$ generate strongly continuous semigroups on $L_1[-1,1]$ and $C_0(\Omega)$, respectively. Therefore, in particular, $1 \in \rho((A^+, \BC))$ and $1 \in \rho((A^-, \BC))$. Thus, $I - (A^-, \BC)$ is surjective and $I - (A^+, \BC)^*$ is injective where the latter implies that $\big(I - (A^+, \BC)^*\big)\Big|_{L_1[-1,1]}$ is also injective. This yields $$I - (A^-, \BC) = \big( I - (A^+, \BC)^*\big)\Big|_{C_0(\Omega)}$$ since if operators $T, S$ are such that $T \subset S$, $T$ is surjective and $S$ is injective, then $T=S$. Hence, $(A^-, \BC) = (A^+, \BC)^*\Big|_{C_0(\Omega)}$. A similar argument holds for the pair, $(A^+, \BC)$ and $(A^-, \BC)^*\Big|_{L_1[-1,1]}$ . 
\qquad \end{proof}

\begin{corollary}
\label{processconvergence}
The stochastic processes $(X^h_t)_{t \geq 0}$ with generators $G_{-h}^{\BC}$ converge in the Skorokhod topology to the limit processes  whose (backwards) generators are the fractional derivative operators $(A^-, \BC)$ of Table \ref{explicitProcesses}. Furthermore, these limit processes are the processes $(Z_t)_{t \geq 0}$ of Table \ref{explicitProcesses}. 
\end{corollary}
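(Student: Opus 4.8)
The plan is to dispatch the two assertions in turn, leaning on the convergence machinery already in place. The first assertion is essentially immediate: Theorem~\ref{trotterkatotypetheorem} establishes that the Feller semigroups generated by $G^{\BC}_{-h}$ converge strongly, uniformly for $t\in[0,t_0]$, to the Feller semigroup generated by $(A^-,\BC)$ on $C_0(\Omega)$, and Theorem~\ref{thm:fc} converts this semigroup convergence directly into convergence of the associated Feller processes in the Skorokhod topology. Since, as noted after Proposition~\ref{tildeA}, the interpolated Feller process $(X^h_t)$ started at a grid point agrees in law with the finite state chain carrying rate matrix $G^{\BC}_{n\times n}$, this yields a limit process with backward generator $(A^-,\BC)$ and settles the first sentence of the corollary.

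The substance lies in the second assertion, identifying this abstract limit with the concrete process $Z_t$ of Table~\ref{explicitProcesses}. First I would observe that, by the construction of Section~\ref{sectionnumericalscheme}, the interior of each matrix $G^{\BC}_{n\times n}$ in \eqref{Glr} is the central block of the two-sided Gr\"unwald matrix $G_h$, so away from the endpoints $(X^h_t)$ evolves exactly as the Gr\"unwald process approximating the spectrally positive stable process $Y_t$. Because the shifted Gr\"unwald operators $A_h$ of \eqref{Grunwald} converge to $D^\alpha_\pm$ and generate positive contraction semigroups, the Trotter--Kato theorem gives Skorokhod convergence of the unrestricted Gr\"unwald process to $Y_t$. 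The boundary rows and columns of $G^{\BC}_{n\times n}$ were designed precisely to realise killing, reflection (e.g.\ $g^{\mathrm{N^*R}}_{1,1}=\mathcal{G}^{\alpha-1}_1$), or fast-forward restart with restart law $-\mathcal{G}^{\alpha-1}_j$ from Corollary~\ref{lambdaRlambda*}; hence for each $h$ the chain $(X^h_t)$ is the discrete analogue of the defining recipe for the corresponding $Z_t$ applied to the Gr\"unwald process.

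It then remains to show that each boundary transformation --- killing on exit, the reflection $Y_t-Y^{\min}_t-1$, and the fast-forward time changes $Y_{E^l_t}$, $Y_{E^r_t}$, $Y_{E^{l,r}_t}$, together with their compositions in the mixed cases~(4) and~(6) --- is continuous with respect to the Skorokhod metric, so that applying it to the convergent Gr\"unwald processes yields convergence to the same transformation of $Y_t$, namely $Z_t$; uniqueness of Skorokhod limits then forces this to coincide with the abstract limit already identified. I expect this continuity verification to be the principal obstacle, and it is genuinely asymmetric between the two endpoints. At the right boundary the relevant functionals are governed by the infimum process $Y^{\min}_t$, which is continuous because $Y_t$ is spectrally positive with continuous downward passage; this is what makes both the reflection map and the time change $E^r_t$ stable under Skorokhod convergence, and why a jump across the right boundary re-enters continuously (``stochastically reflecting''). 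At the left boundary the supremum process is pure jump, so the time change $E^l_t$ restarts strictly inside the domain and must be treated separately: here the additive functional $\int_0^\tau 1_{\{V_s\ge -1\}}\,ds$ has a discontinuous integrand, and the delicate point is to argue that the relevant level crossings occur almost surely at continuity times of the limiting path, so that the inverse functional, and hence the time-changed process, passes to the limit. Killing is the mildest of these, being continuous off the null event that the first exit time fails to be a continuity point of the path.
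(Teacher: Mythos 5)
Your proposal is correct and takes essentially the same approach as the paper: Theorem \ref{trotterkatotypetheorem} plus Theorem \ref{thm:fc} for the Skorokhod convergence, followed by identifying the discrete chains as killed/reflected/fast-forwarded Gr\"unwald processes and invoking continuity of these modifications in the Skorokhod metric. The only ingredient the paper adds that you omit is the passage from grid starting points to arbitrary $x\in\Omega$ (via starting points in $\Omega\cap\mathbb{Q}$, which are grid points along a subsequence of meshes, and then the Feller property); conversely, the paper merely asserts the Skorokhod continuity of the boundary modifications that you correctly single out as the delicate step.
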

\begin{proof}
 Theorem \ref{trotterkatotypetheorem}, Corollary \ref{adjointfractionalderivatives} and \cite[p. 331, Theorem 17.25]{Kallenberg1997} show that the stochastic processes $(X^h_t)_{t \geq 0}$ with generators $G_{-h}^{\BC}$ converge in the Skorokhod topology to the processes associated with the fractional derivative operators $(A^-, \BC)$ of Table \ref{explicitdomains}. It remains to show that these processes  are actually the processes $(Z_t)_{t \geq 0}$ obtained by modifying the process via killing, restarting, or fast-forwarding at the respective boundaries. Note that these modifications (via supremum, infimum, integrals, etc) are continuous mappings in the Skorokhod metric. We know that on $\R$ the discrete Gr\"unwald process converges in the Skorokhod topology to the stable process and hence these modifications of the discrete Gr\"unwald process will converge to the modification of the stable process.   For each starting point $x\in \Omega\cap\mathbb Q$ there exists $h^x$ such that $x$ is a grid point for all grids $h^x/n$.  We showed in Section \ref{sectionnumericalscheme} that  $(X^{h^x/n}_t)_{t \geq 0}$ starting at $X^{h^x/n}_0=x$ is the same process as the discrete modified Gr\"unwald process starting at $x$.  As the infimum, supremum, integrals and killing are continuous functions in the Skorokhod metric, the limit process is the limit of the modified Gr\"unwald process  started at $x\in \Omega\cap \mathbb Q$. Since these processes are Feller, this now also holds for all $x\in \Omega$ and therefore the limit processes are indeed the modified stable processes.
\qquad \end{proof}

\section{Proof of Proposition \ref{mainproposition}}
\label{sectionmainproposition}

We begin with the necessary preparations for the proof. For polynomials $P= \sum_{m=0}^N k_m p_m \in X$,  let
\begin{equation}
\mathcal{P}= \left\{ \begin{array}{lc}
I_+^\alpha P, \; &\mathrm{if} \; X = L_1[-1,1],\\
I_-^\alpha ( P - P(1) p_0), \;& \mathrm{if} \; X = C_0(\Omega).
\end{array}\right.
\end{equation}
Without loss of generality we consider the core $$\mathcal {C}(A, \BC)=\{f:f=\mathcal P+ap_\alpha+bp_{\alpha-1}+cp_\eta\},$$ where $a,b$, and $c$ are given in Tables \ref{fhexplicit1} and \ref{fhexplicitC}.
 As $G^{\BC}_h p_\beta$ for relevant $\beta\in\{\alpha,\alpha-1,\alpha-2,0\}$ does not converge to zero in the respective norms, we need to approximate these functions with functions tailored to $G_h^{\BC}$.

\begin{definition}
\label{varthetafunctions}
The approximate power functions are given by 
\begin{equation}
\label{varthetageneralform}
\vartheta_{ h}^{\beta}(x) = h^{\beta} \Big( \big(1- \theta (\lambda) \big) \mathcal{G}_{\iota( x)-2-\tau}^{-\beta-1}+ \theta (\lambda) \mathcal{G}_{\iota( x)-1-\tau}^{-\beta-1}\Big), \;\textnormal {if} \; \iota( x) \neq 1,
\end{equation}
where $\tau$, $\theta, $ and $\vartheta^\beta_h (x)$ for $\iota(x) =1$ are given in Table \ref{varthetadetails} for each $\beta$. We then define the space specific functions (`+' if $X=L_1[-1,1]$ and `-' if $X=C_0(\Omega)$) via
$$\vartheta^{\beta}_{\pm h}(x)=\vartheta^\beta_h(\pm x).$$
\end{definition}

\begin{table}
\centering
\vline
\begin{tabular}{c|c|c|}
  \hline
  $\beta$  & $L_1[-1,1]$ & $C_0(\Omega)$ \\ 
\hline
$\alpha$ & $\begin{matrix} \theta (\lambda)= 1,\; \tau =1\hfill\\
                                               \vartheta^\alpha_h (x) = - h^\alpha\lambda' \mathcal{G}^{-\alpha-1}_0 ,\; \mathrm{if} \;\iota (x) =1 \end{matrix}$ & $\begin{matrix} \theta (\lambda)= \lambda,\;\tau =1 \hfill\\
                                               \vartheta^\alpha_h (x) = - h^\alpha \lambda' \mathcal{G}^{-\alpha-1}_0 ,\; \mathrm{if} \; \iota (x) =1 \end{matrix}$\\
\hline
$\alpha-1$ & $\begin{matrix} \theta (\lambda)= \lambda,\;\tau =0 \hfill\\
                                               \vartheta^{\alpha-1}_h (x) = \frac{h^{\alpha-1}}{\alpha} (\lambda' \mathcal{G}^{-\alpha}_0 +\lambda \mathcal{G}^{-\alpha}_1) ,\; \mathrm{if} \;\iota (x) =1 \end{matrix}$ & $\begin{matrix} \theta (\lambda)= \lambda,\;\tau =0 \hfill\\
                                               \vartheta^{\alpha-1}_h (x) = h^{\alpha -1} \lambda \mathcal{G}^{-\alpha}_0 ,\; \mathrm{if} \; \iota (x) =1 \end{matrix}$\\
\hline
$0$ & $\begin{matrix} \theta (\lambda)= 1,\;\tau =0 \hfill\\
                                               \vartheta^0_h (x) = \lambda \mathcal{G}^{-1}_0 ,\; \mathrm{if} \;\iota (x) =1 \end{matrix}$ & $\begin{matrix} \theta (\lambda)= 1,\;\tau =0 \hfill\\
                                               \vartheta^0_h (x) = \mathcal{G}^{-1}_0 ,\; \mathrm{if} \; \iota (x) =1 \end{matrix}$\\
\hline
$\alpha -2$ & $\begin{matrix} \theta (\lambda)= \frac{\lambda}{(\alpha -1)\lambda' + \lambda}, \;\tau =0\hfill\\
                                               \vartheta^{\alpha-2}_h (x) = h^{\alpha -2} \theta (\lambda ) \mathcal{G}^{-\alpha +1}_0 ,\; \mathrm{if} \;\iota (x) =1 \end{matrix}$ & \\
\hline
\end{tabular}
\caption{\label{varthetadetails}
Approximate power functions on $X$.}
\end{table}
For each $f\in \mathcal {C}(A, \BC)$ 
we construct functions $f_h$ that converge to $f$ as well as $G_{\pm h}^{\BC}f_h\to A^\pm f$, where $f_h$ are of the form
\begin{equation}
\label{functionfh}
f_h= \mathcal {P}+a_h\vartheta_h^\alpha +b_h \vartheta_h^{\alpha-1}+c_h\vartheta_h^\eta+\epsilon^{\BC}_h,
\end{equation}
which are listed alongside in the third column of Tables \ref{fhexplicit1} and \ref{fhexplicitC}. The functions $\epsilon^{\BC}_h$ are the terms involving $\lambda_{n+1}$ or $(1-\lambda_1)$ and converge to zero in the respective norms and help cancelling out specific error terms.

\begin{lemma}
\label{varthetaconvergence}
The approximate power functions $\vartheta_h^\alpha$, $\vartheta_h^{\alpha-1}$ and $\vartheta_h^{0}$ converge to $p_\alpha$, $p_{\alpha-1}$ and $p_0$, respectively in the $X$-norm and $\vartheta_h^{\alpha-2}$ converge to  $p_{\alpha-2}$ in $L_1[-1,1]$-norm. In particular, $f_h\to f$ for all $f\in \mathcal C(A,\BC)$ and corresponding $f_h$ in Tables \ref{fhexplicit1} and \ref{fhexplicitC}. Furthermore,
\begin{equation}\label{GthetaBeta}G_{\pm h}^{\BC}\vartheta^\beta_{\pm h}(x)=0 \end{equation} for $\iota(\pm x)<n$   and all relevant combinations of $\beta \in\{\alpha-1,0,\alpha-2\}$ and  boundary conditions given by Table \ref{explicitdomains} respectively. Finally,
\begin{equation}\label{GthetaAlpha}G_{- h}^{\mathrm{LN}}\vartheta^\alpha_{- h}(x)= 1, G_{+h}^{\mathrm{NN}}\vartheta^\alpha_{ h}(x)=1\end{equation} for $\iota(- x)<n$ and 
\begin{equation}\label{GthetaAlpha*}\lim_{h\to 0}\int_{-1}^{1-2h}\left|G_{+h}^{\mathrm{N*N}} \vartheta^\alpha_{+ h}(x)-1\right|\,dx=0.\end{equation}
\end{lemma}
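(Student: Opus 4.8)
The plan is to prove the three assertions — the norm convergence $\vartheta_h^\beta\to p_\beta$, the pointwise operator identities \eqref{GthetaBeta}--\eqref{GthetaAlpha}, and the $L_1$ defect bound \eqref{GthetaAlpha*} — separately, but to organize everything around the single algebraic observation that the Gr\"unwald coefficients are the Taylor coefficients of $(1-z)^\gamma$; equivalently, around the two identities already recorded in \eqref{grunwaldproperties}, namely the convolution rule $\sum_{k}\mathcal{G}^{q}_{k}\mathcal{G}^{Q}_{m-k}=\mathcal{G}^{q+Q}_{m}$ and the row-sum rule $\sum_{k\ge0}\mathcal{G}^\alpha_k=0$. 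The function $\vartheta^\beta_h$ is, by \eqref{varthetageneralform}, nothing but a $\theta$-interpolated copy of the discrete power sequence $h^\beta\mathcal{G}^{-\beta-1}_{\iota(x)-1-\tau}$, and the whole lemma amounts to saying that this sequence is a faithful discretization of $p_\beta$ on which the truncated Gr\"unwald stencil in \eqref{interpolationmatrixGBC} reproduces \eqref{specialpDalpha} exactly in the interior.

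First I would establish convergence. From the asymptotics in \eqref{grunwaldproperties}, $\mathcal{G}^{-\beta-1}_n=\tfrac{n^{\beta}}{\Gamma(\beta+1)}\bigl(1+O(n^{-1})\bigr)$, and since $\iota(x)\sim(1+x)/h$ the leading term of $h^\beta\mathcal{G}^{-\beta-1}_{\iota(x)-1-\tau}$ is $(1+x)^\beta/\Gamma(\beta+1)=p^+_\beta(x)$, giving pointwise convergence on $(-1,1)$. The $\theta$-interpolation is a convex combination of two adjacent coefficients and hence does not affect the leading order, while the exceptional $\iota(x)=1$ values of Table \ref{varthetadetails} alter only the first grid cell, whose measure is $O(h)$. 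I would then upgrade pointwise to norm convergence via a uniform integrable bound on $[-1,1]$ together with dominated convergence for the $L_1[-1,1]$ statements (and, for $\beta=\alpha-2$, the boundary singularity is integrable since $\alpha-2>-1$), and via equicontinuity plus boundary decay for the $C_0(\Omega)$ statement. The claim $f_h\to f$ is then immediate from \eqref{functionfh}: $\mathcal P$ is common to $f$ and $f_h$, the coefficients $a_h,b_h,c_h$ of Tables \ref{fhexplicit1} and \ref{fhexplicitC} converge to $a,b,c$, and $\epsilon^{\BC}_h\to0$ in norm.

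Second, for \eqref{GthetaBeta} and \eqref{GthetaAlpha} I would apply the rows of \eqref{interpolationmatrixGBC} to $\Pi_{n+1}\vartheta^\beta_h$. Away from both boundaries each interior row is the full shifted stencil, so by the convolution rule the output is a scalar multiple of $\mathcal{G}^{\alpha}*\mathcal{G}^{-\beta-1}=\mathcal{G}^{\alpha-\beta-1}$. For $\beta=\alpha-1$ this is $\mathcal{G}^{0}_m=\delta_{m0}$ (Kronecker), and for $\beta=\alpha-2$ it is $\mathcal{G}^{1}_m=\delta_{m0}-\delta_{m1}$; both vanish at every interior index, the exact discrete counterpart of $D^\alpha p_{\alpha-1}=D^\alpha p_{\alpha-2}=\mathbf0$ in \eqref{specialpDalpha}. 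For $\beta=0$, where $\vartheta^0_h$ is the constant $p_0$, the identity is cleanest: the $\mathrm N\mathrm N$ and $\mathrm N^*\mathrm N$ matrices are conservative (zero row sums, as noted for $G^{\mathrm N^*\mathrm N}_{n\times n}$), so $G_h\mathbf1=0$, which is the discrete $\partial^\alpha p_0=\mathbf0$. For $\beta=\alpha$ the same convolution gives $\mathcal{G}^{-1}_m\equiv1$, i.e. the constant $1$, matching $D^\alpha p_\alpha=p_0$ and yielding \eqref{GthetaAlpha}. The substance of this step is not the interior identity but checking that it survives in the rows the stencil has truncated against the boundary: there I would verify, row by row, using the weights $b^l_i,b^r_i$ and the interpolants $D^{l,r},N^{l,r}$ of Table \ref{mainTable} together with the special $\iota(x)=1$ entries of Table \ref{varthetadetails}, that each truncated partial sum is precisely the missing Gr\"unwald tail, so the $\delta$-contributions and the residual tails cancel exactly — this is exactly what those boundary weights were constructed to do, and it is the most laborious bookkeeping. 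The restriction $\iota(\pm x)<n$ simply discards the single cell at the opposite boundary where the one-step drift term $\mathcal{G}^\alpha_0$ is cut and must be treated by the killing/reflection rule instead.

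The hard part is \eqref{GthetaAlpha*}. In the $\mathrm N^*\mathrm N$ case the left condition controls the ordinary first derivative rather than the $(\alpha-1)$-derivative, and the stencil cannot reproduce $D^\alpha p_\alpha=p_0$ exactly near that boundary: one is left with a residual $G^{\mathrm N^*\mathrm N}_{+h}\vartheta^\alpha_{+h}(x)-1$ that is only $O(1)$ on the first one or two grid cells — where $\vartheta^\alpha_h$ uses its exceptional value $-h^\alpha\lambda'\mathcal{G}^{-\alpha-1}_0$ and $\theta=\lambda$ — but decays away from $-1$. I would estimate this residual directly from the first two rows of $(G^{\mathrm N^*\mathrm N}_{n+1})^T$ and the asymptotics of $\mathcal{G}^{-\alpha-1}$, show it is bounded uniformly in $h$ on a boundary layer near $-1$ of width $O(h)$ and tends to $0$ pointwise in the interior, and then conclude $\int_{-1}^{1-2h}|G^{\mathrm N^*\mathrm N}_{+h}\vartheta^\alpha_{+h}-1|\,\dd x\to0$ by splitting the integral into the boundary layer (vanishing measure, bounded integrand) and its complement (bounded measure, vanishing integrand) and applying dominated convergence. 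The deletion of the terminal cell $[1-2h,1]$ from the range of integration is exactly to discard the mirror artefact at the killed right boundary, where the rightmost drift term is truncated and \eqref{GthetaBeta}--\eqref{GthetaAlpha} were likewise not claimed.
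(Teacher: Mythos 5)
Your proposal is correct and follows essentially the same route as the paper: the paper's proof likewise derives $\vartheta_h^\beta\to p_\beta$ from the asymptotics $\mathcal{G}_n^{-\beta-1}=\tfrac{n^{\beta}}{\Gamma(\beta+1)}(1+O(n^{-1}))$ in \eqref{grunwaldproperties} and then simply asserts that the identities \eqref{GthetaBeta}--\eqref{GthetaAlpha*} hold "by design" of the $\vartheta^\beta_{\pm h}$. Your use of the convolution identity $\mathcal{G}^{\alpha}*\mathcal{G}^{-\beta-1}=\mathcal{G}^{\alpha-\beta-1}$ and the row-by-row boundary cancellation is exactly the mechanism behind that design, so you are supplying details the paper leaves implicit rather than taking a different path.
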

\begin{proof}
On a fixed grid point $x = kh$, using \eqref{grunwaldproperties} observe that
$$\vartheta_h^{\beta}(kh-1) = h^{\beta} \mathcal{G}_{k-1}^{-\beta -1}=\frac{((k-1)h)^{\beta}}{\Gamma(\beta +1)} \left[1+ O ((k-1)^{-1})\right]= p_{\beta} (kh) + O (h).$$ 
As $\vartheta_h^\beta$ is interpolating between the grid points the total error converges to zero as well. 

The functions $\vartheta^\beta_{\pm h}$ were designed, using \eqref{grunwaldproperties}, such that \eqref{GthetaBeta}, \eqref{GthetaAlpha}, and \eqref{GthetaAlpha*} hold.  \end{proof}

\begin{table}
\centering
\vline
\begin{tabular}{l|l|l|}
  \hline
  \multicolumn{3}{c}{$X = L_1[-1,1]$ and $\mathcal{P}= I^\alpha P, \; P \in L_1[-1,1]$ polynomial }  
	\vline \\
	\hline
	 $(A^+, \BC)$ & $f \in \mathcal{C}(A^+, \BC)$ & $f_h \in L_1[-1,1] $\\
	\hline
	$(\partial^\alpha_+, \mathrm{DD})$ & $\mathcal{P} -  \frac{\mathcal{P}(1)}{p^+_{\alpha-1}(1)} p^+_{\alpha-1}$  &$\mathcal{P} - \frac{\mathcal{P}(1) }{p^+_{\alpha-1}(1)} \vartheta^{\alpha-1}_{+h}$ \\
  \hline
	$(\partial^\alpha_+, \mathrm{DN})$ &$ \mathcal{P} - \partial^{\alpha -1}_+ \mathcal{P}(1) p^+_{\alpha-1}$  & $ \left(\mathcal{P}  - \partial^{\alpha -1}_+\mathcal{P}(1)\vartheta_{+h}^{\alpha-1}\right)(1-\lambda_{n+1})$  \\
  \hline
	$(\partial^\alpha_+, \mathrm{ND})$  &  $\mathcal{P} -\mathcal{P}(1) p_0$ & $\mathcal{P} -\mathcal{P}(1) \vartheta^0_{+h}$ \\
  \hline
	$(\partial^\alpha_+, \mathrm{NN})$  &  $\mathcal{P}  - \frac{\partial^{\alpha -1}_+\mathcal{P}(1)}{p_1^+(1)} p^+_\alpha + cp_0$ & $\left(\mathcal{P}- \frac{\partial^{\alpha -1}_+\mathcal{P}(1)}{p_1^+(1)} \vartheta_{+h}^\alpha +c\vartheta^0_{+h}\right)(1-\lambda_{n+1}) $ \\
  \hline
	$(D^\alpha, \mathrm{ND})$   & $\mathcal{P} - \frac{ \mathcal{P} (1) }{p_{\alpha-2}^+(1)}p^+_{\alpha-2}$ & $\mathcal{P} - \frac{  \mathcal{P}(1)}{ p_{\alpha-2}^+ (1) }\vartheta_{+h}^{\alpha-2}$   \\
  \hline
	$(D^\alpha, \mathrm{NN})$   &$ \mathcal{P} - \frac{D^{\alpha -1} \mathcal{P}(1))}{p_1^+(1)} p_\alpha + cp_{\alpha -2}$ & $\left(\mathcal{P}-\frac{ D^{\alpha -1} \mathcal{P}(1)}{p^+_1(1)}\vartheta_h^\alpha+c\vartheta^{\alpha -2}_h\right)(1-\lambda_{n+1})$   \\
  \hline
  \end{tabular}
\caption{\label{fhexplicit1}
Functions $f_h \in L_1[-1,1]$ with $\lambda_{n+1}=\lambda$  if $\iota(x)= n+1$, and $0$ otherwise.}
\end{table}

\begin{table}
\centering
\vline
\begin{tabular}{l|l|l|}
  \hline
  \multicolumn{3}{c}{$X = C_0(\Omega)$ and $\mathcal{P}= I_-^\alpha (P-P(1)p_0), \; P \in C_0(\Omega)$ polynomial  }  
	\vline \\
	\hline
	$(A^-, \BC )$ & $f \in \mathcal{C}(A^-, \BC) $ & $f_h \in C_0(\Omega)$  \\
	\hline
	$(\partial^\alpha_-, \mathrm{DD})$ & $\mathcal{P} - \frac{\mathcal{P}(-1)}{p^-_{\alpha-1}(-1)} p^-_{\alpha-1}$ & $ \mathcal{P} - \frac{\mathcal{P}(-1)}{ \vartheta_{-h}^{\alpha-1}(-1)} \vartheta_{-h}^{\alpha-1}$  \\
  \hline
	$(\partial^\alpha_-, \mathrm{DN})$ &  $\begin{matrix}\mathcal{P} +P(1) p_\alpha^- - \hfill\hfill\\ (\mathcal{P}(-1)+P(1)p_{\alpha}^-(-1)) p_0\end{matrix}$ & $\begin{matrix}\mathcal{P}+ \frac{P(1)p_\alpha^-(-1)}{ \vartheta_{-h}^\alpha(-1)} \vartheta_{-h}^\alpha -\hfill\hfill\\ \quad (\mathcal{P}(-1)+P(1)p_\alpha^-(-1))\vartheta^0_h\end{matrix}$ \\
  \hline
	$(\partial^\alpha_-, \mathrm{ND})$ &$ \mathcal{P} - \partial^{\alpha -1}_- \mathcal{P}(-1) p^-_{\alpha-1}$  & $ \begin{matrix} \mathcal{P} -\frac{2 \partial^{\alpha -1}_-\mathcal{P}(-1)  -(\alpha - 1)h \partial^\alpha_- \mathcal{P}(-1)}{2} \vartheta_{-h}^{\alpha-1}  \\\quad  -h^\alpha \lambda'_1\partial_-^\alpha \mathcal P(-1)\hfill\hfill \end{matrix} $\\
  \hline
	$(\partial^\alpha_-, \mathrm{NN})$ 
	&
	  $ \mathcal{P}  - \frac{\partial^{\alpha -1}_-\mathcal{P}(-1)}{p^-_1(-1)} p^-_\alpha + cp_0$ 
	  & 
	  $ \begin{matrix}\mathcal{P}-\frac{ \partial^{\alpha -1}_-\mathcal{P}(-1) \left(1+\frac{h}{p_1^-(-1)}\right) - \frac{\alpha - 1}2h \partial^\alpha_- \mathcal{P}(-1)}{p^{-}_1(-1)} \vartheta_{-h}^\alpha \hfill\hfill\\\quad
	 +c\vartheta^0_h -h^\alpha\lambda'_1\left(\partial_-^\alpha\mathcal P(-1)-\frac{\partial_-^{\alpha-1}\mathcal P(-1)}{p_{1}^-(-1)}\right) \end{matrix} $ \\
  \hline
	$(\partial^\alpha_-, \mathrm{N^*D})$ & $\mathcal{P} + \frac{ \mathcal{P}' (-1)}{p_{\alpha-2}^-(-1)} p^-_{\alpha-1}$ & $\mathcal{P} +\frac{  \mathcal{P}'(-1)}{ p^-_{\alpha-2} (-1)  }\vartheta_{-h}^{\alpha-1}$\\
  \hline
	$(\partial^\alpha_-, \mathrm{N^*N})$ &$ \mathcal{P} + \frac{ \mathcal{P}'(-1)}{p_{\alpha-1}^-(-1)} p^-_\alpha + cp_0$ & $\mathcal{P}+ \frac{ \mathcal{P}'(-1) }{ p_{\alpha-1}^-(-1) } \vartheta_{-h}^\alpha+c \vartheta^0_h$\\
  \hline
\end{tabular}
\caption{\label{fhexplicitC}
Functions $f_h \in C_0(\Omega)$ with $\lambda'_1=1-\lambda$ if $\iota(x)= 1$, and $0$ otherwise.}
\end{table}

It remains to be shown that $G_{\pm h}^{\BC}\mathcal P$ converges on $\{x:\iota(\pm x)<n\}$ and that we have convergence of $G^{\BC}_{\pm h} f_h$ to $P+ap_0$ on $\{x:\iota(\pm x)\ge n\}$ as well. It follows from \cite[Theorem 5.1]{Baeumer2012a} (see, also, \cite[Proposition 4.9]{Baeumer2009}) that the shifted Gr\"unwald approximation formula
\begin{equation}
\label{grunwaldforboundeddomain1}
A^\alpha _{\pm h,q} p^\pm_\beta (x) = \frac {1}{h^\alpha} \sum_{k=0}^{N}  \mathcal{G}^\alpha _k p_{\beta}(x \mp(k-q))h)\to p^\pm_{\beta-\alpha}(x),
\end{equation}
converges on $C_0(\Omega)$ for $\beta>\alpha$, and in $L_1[-1,1]$ for $\beta>\alpha-1$. Here we can take $N =\iota(\pm (x+ph)) -1$ as $p^\pm_\beta(x)=0$ for $\pm x<-1$.

\begin{lemma}
In case of  $X=C_0(\Omega)$, $$\sup_{x:\iota(-x)<n}\left|G_{-h}^{\BC}\mathcal P(x)-P(x)+P(1)\right|\to 0$$
and in case of $X=L[-1,1]$,
$$\int_{-1}^{1-2h}\left|G_{+h}^{\BC}\mathcal P(x)-P(x)\right|\,dx\to 0.$$

\end{lemma}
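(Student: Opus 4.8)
The plan is to compare each operator with the shifted Gr\"unwald approximation \eqref{grunwaldforboundeddomain1} and to show that the boundary modifications of the interpolation matrix \eqref{interpolationmatrixGBC} contribute only an error that vanishes in the relevant norm.

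First I would expand $\mathcal P$ into power functions via \eqref{Ialphapfunction}. For $X=L_1[-1,1]$, $\mathcal P=I_+^\alpha P=\sum_{m=0}^N k_m p^+_{m+\alpha}$, so every exponent satisfies $\beta=m+\alpha\ge\alpha>\alpha-1$. For $X=C_0(\Omega)$, since $P(1)=k_0$ we have $P-P(1)p_0=\sum_{m\ge1}k_m p^-_m$, whence $\mathcal P=I_-^\alpha(P-P(1)p_0)=\sum_{m\ge1}k_m p^-_{m+\alpha}$; note that the exponent $\beta=\alpha$, which is \emph{not} covered by \eqref{grunwaldforboundeddomain1} on $C_0(\Omega)$, has been eliminated, so every surviving term has $\beta>\alpha$. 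Consequently \eqref{grunwaldforboundeddomain1} applies to each term and, by linearity, $A^\alpha_{+h,1}\mathcal P\to\sum_m k_m p^+_m=P$ in $L_1[-1,1]$ and $A^\alpha_{-h,1}\mathcal P\to\sum_{m\ge1}k_m p^-_m=P-P(1)$ in $C_0(\Omega)$, where $\mathcal P$ is extended by zero outside $[-1,1]$ (consistent with the truncation $N=\iota(\pm(x+ph))-1$). These are exactly the claimed limits.

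Next I would show that on the retained grids the operator coincides with the Gr\"unwald sum up to a boundary term. The interior Gr\"unwald band $g_{i,j}=\mathcal G^\alpha_{j-i+1}$ is Toeplitz, so the convex interpolation $(1-\lambda)g_{i-1,j-1}+\lambda g_{i,j}$ reproduces $\mathcal G^\alpha_{j-i+1}$ independently of $\lambda$; reading off row $\iota(x)$ of \eqref{interpolationmatrixGBC} for $G^{\BC}_{-h}$ (resp.\ column $\iota(x)$ for $G^{\BC}_{+h}$) and rewriting each neighbouring grid value as $\mathcal P(y+(k-1)h)$ (resp.\ $\mathcal P(y+(1-k)h)$) therefore yields $G^{\BC}_{\mp h}\mathcal P=A^\alpha_{\mp h,1}\mathcal P+E_h$, where $E_h$ collects only the genuinely modified entries: for $G^{\BC}_{+h}$ these involve $\mathcal P$ only on grids $1,2$ (the left-boundary rows and column), and for $G^{\BC}_{-h}$ they involve $\mathcal P$ only on the grids nearest $x=1$ (the right-boundary columns and rows), together with the adjacent weights from Table \ref{mainTable}.

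The crux, and the step I expect to be the real obstacle, is the estimate $E_h\to0$. For $G^{\BC}_{+h}$, writing $v_1,v_2$ for the values of $\mathcal P$ on grids $1$ and $2$, the error at grid $i$ is a bounded multiple of $h^{-\alpha}\big[(D^l b^l_i-\mathcal G^\alpha_i)v_1+(1-\lambda)(b^l_{i-1}-\mathcal G^\alpha_{i-1})v_2\big]$. Since $\mathcal P=I_+^\alpha P=O((1+x)^\alpha)$ near $-1$ by \eqref{Ialphapfunction} and \eqref{Inuat0}, the values $v_1,v_2$ are $O(h^\alpha)$, so the pointwise error is $O(|D^l b^l_i-\mathcal G^\alpha_i|+|b^l_{i-1}-\mathcal G^\alpha_{i-1}|)$; integrating over each grid (width $h$), summing, and using that the weights of Table \ref{mainTable} are absolutely summable by the asymptotics in \eqref{grunwaldproperties}, gives $\|E_h\|_{L_1}=O(h)$. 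For $G^{\BC}_{-h}$ the error at grid $i$ involves only values of $\mathcal P$ at the grids nearest $x=1$; since there $\mathcal P=\sum_{m\ge1}k_m p^-_{m+\alpha}=O((1-x)^{1+\alpha})$, these values are $O(h^{1+\alpha})$, and multiplying by the bounded modified weights and dividing by $h^\alpha$ yields $\sup_{\iota(-x)<n}|E_h|=O(h)$. The extra power $(1-x)^{1+\alpha}$ is precisely what is needed to control the grids $i=n,n+1$ that remain inside the supremum, where the entire row is boundary-modified. Combining these bounds with the termwise convergence of $A^\alpha_{\mp h,1}\mathcal P$ from \eqref{grunwaldforboundeddomain1} completes both assertions.
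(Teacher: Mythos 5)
Your proposal is correct and follows essentially the same route as the paper: the paper's (very terse) proof likewise combines the shifted Gr\"unwald convergence \eqref{grunwaldforboundeddomain1} with the decay estimates $\mathcal P(x)=O(h^{\alpha+1})$ near $x=1$ for $X=C_0(\Omega)$ and $\mathcal P(x)=O(h^{\alpha})$ near $x=-1$ for $X=L_1[-1,1]$ to absorb the boundary-modified entries of the interpolation matrix. Your write-up simply makes explicit the steps the paper leaves implicit (the termwise expansion of $\mathcal P$, the identification of which matrix entries differ from the Toeplitz Gr\"unwald band, and the summability of the weights), all of which check out.
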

\begin{proof}
This follows from the shifted Gr\"unwald formula \eqref{grunwaldforboundeddomain1} and the fact that $\mathcal P(x)=O(h^{\alpha+1})$ for $\iota(- x)\le 2$ and $X=C_0(\Omega)$, and $\mathcal P(x)=O(h^\alpha)$ for $\iota(x)\le 2$ and $X=L_1[-1,1]$.
\end{proof}

To be able to show convergence on the last two grids $\iota(\pm x)\ge n$, we use the boundary conditions to compensate for the deviation from the Gr\"unwald formula of the interpolation matrix.

\begin{lemma}In all cases of Tables \ref{fhexplicit1} and \ref{fhexplicitC}, for $f\in \mathcal C(A^+,\BC)$ or $f\in \mathcal C(A^-,\BC)$,  $$\int_{1-2h}^1\left|G_{+h}^{\BC} f_h(x)-A^+f(x)\right|\,dx\to 0\mbox{ or }\sup_{x\in[-1,-1+2h]}\left|G_{-h}^{\BC} f_h(x)-A^-f(x)\right|\to 0$$ respectively.\end{lemma}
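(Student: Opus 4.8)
The plan is to use linearity of $G_{\pm h}^{\BC}$ and of $A^\pm$ to reduce to an estimate on the two boundary cells, and then to verify that estimate row by row from Tables \ref{fhexplicit1} and \ref{fhexplicitC}. On $\{x:\iota(\pm x)<n\}$ the two preceding lemmas already supply convergence, so only the strip $[1-2h,1]$ (resp. $[-1,-1+2h]$) remains. The two norms call for different endgames. Since $[1-2h,1]$ has Lebesgue measure at most $2h$ and $A^+f=P+a p_0$ is bounded, for the $L_1[-1,1]$ cases it is enough to establish a uniform bound $\sup_{x\in[1-2h,1]}|G_{+h}^{\BC}f_h(x)|=O(1)$, whence $\int_{1-2h}^1|G_{+h}^{\BC}f_h-A^+f|\,dx=O(h)\to0$ automatically; no pointwise convergence at the boundary is needed there. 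For the $C_0(\Omega)$ cases no such shortcut exists and I must prove genuine uniform smallness of $G_{-h}^{\BC}f_h-A^-f$ on $[-1,-1+2h]$, where now $A^-f=P-P(1)p_0+a p_0$.

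First I would split $G_{\pm h}^{\BC}f_h$ along the decomposition $f_h=\mathcal P+a_h\vartheta_h^\alpha+b_h\vartheta_h^{\alpha-1}+c_h\vartheta_h^\eta+\epsilon^{\BC}_h$ recorded in the tables and treat $G_{\pm h}^{\BC}\mathcal P$ on the boundary cells. On the interior the shifted Gr\"unwald convergence \eqref{grunwaldforboundeddomain1} applies because $\mathcal P$ is $I^\alpha$ of a polynomial, hence smooth with finite boundary data $\mathcal P(\mp1)$, $\partial^{\alpha-1}\mathcal P(\mp1)$, $\mathcal P'(\mp1)$; on the boundary cells the rows of the interpolation matrix \eqref{interpolationmatrixGBC} differ from the bulk Gr\"unwald stencil only through the weights $b^{l,r}_i$ of Table \ref{mainTable}. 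Using the Gr\"unwald identities \eqref{grunwaldproperties}, in particular the telescoping relation $\sum_{n=0}^k\mathcal G_n^\alpha=\mathcal G_k^{\alpha-1}$ and the asymptotics $\mathcal G^\alpha_n\sim n^{-1-\alpha}/\Gamma(-\alpha)$, I would extract the leading boundary deviation of $G_{\pm h}^{\BC}\mathcal P$ from $A^\pm\mathcal P$ and show it is a multiple of a discrete power-type function whose prefactor is exactly one of those boundary values.

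The core of the argument is then to check that the approximate power function terms are built to absorb this deviation and to supply the missing $a p_0$. Here I would invoke \eqref{GthetaBeta} and \eqref{GthetaAlpha} of Lemma \ref{varthetaconvergence}: the kernel generators $\vartheta_h^{\alpha-1},\vartheta_h^0,\vartheta_h^{\alpha-2}$ are annihilated on the interior and, on the boundary cells, produce precisely the discrete power-type function matching the deviation of $G_{\pm h}^{\BC}\mathcal P$, while $\vartheta_h^\alpha$ contributes the constant $1$ needed so that $a_h\to a$. The factors $(1-\lambda_{n+1})$ and $\lambda_1'$ defining $\epsilon^{\BC}_h$ are supported only in the very last cell and are tuned to cancel the residual first-/last-row term that survives there. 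Carrying this out with the coefficients $a_h,b_h,c_h$ listed in the tables turns the boundary residual into $O(h^\gamma)$ for some $\gamma>0$ in the sup-norm cases and into a uniformly bounded quantity in the $L_1$ cases.

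The main obstacle is the Neumann-type rows, and in particular the $\mathrm{N^*}$ cases. The discrete Neumann weights $b^l_i=-\mathcal G^{\alpha-1}_{i-1}$ and their $\mathrm{N^*}$ variant $b^l_1=\mathcal G^{\alpha-1}_1$ must be matched against the continuous Neumann condition, and for these rows the convenient identity $G\vartheta_h^\alpha=1$ holds only in the averaged form \eqref{GthetaAlpha*} rather than pointwise. The $\mathrm{N^*N}$ and $\mathrm{N^*D}$ rows therefore demand the finer boundary asymptotics $\mathcal G_n^\alpha\sim n^{-1-\alpha}/\Gamma(-\alpha)$ to pin down the last-cell behaviour directly (note that these rows of Table \ref{fhexplicitC} carry no $\epsilon^{\BC}_h$ correction), and in the $L_1$ cases one additionally exploits the $O(h)$ measure of the boundary strip together with \eqref{GthetaAlpha*}. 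Verifying, in each of the twelve rows separately, that the surviving residual tends to zero in the appropriate norm --- rather than any single unified estimate --- is the substance of the proof.
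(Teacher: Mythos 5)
Your overall strategy---decompose $f_h$ as $\mathcal P$ plus the approximate power functions, compute $G_{\pm h}^{\BC}$ of each piece on the two boundary cells, and verify row by row that the tabulated coefficients $a_h,b_h,c_h$ and the $\epsilon_h^{\BC}$ terms cancel the singular boundary contributions---is exactly the paper's proof, which records the needed approximate values in Tables \ref{ErrorTableL1} and \ref{ErrorTableC} and combines them according to Tables \ref{fhexplicit1} and \ref{fhexplicitC}. However, two of your quantitative claims do not hold as stated. First, your shortcut for the $L_1[-1,1]$ cases rests on the bound $\sup_{x\in[1-2h,1]}|G_{+h}^{\BC}f_h(x)|=O(1)$, and the companion claim later that the boundary residual is ``a uniformly bounded quantity in the $L_1$ cases.'' This is false: the individual terms on the last two cells are of size $h^{-\alpha}$, and after the designed cancellation the residual is controlled only to order $h^{1-\alpha}$ (the ``$\approx$'' in Table \ref{ErrorTableL1} carries accuracy $O(h^{1-\alpha})$, coming e.g.\ from replacing $\mathcal P(x+h)$ by $\mathcal P(1)$), which is unbounded for $\alpha>1$. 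The bound that is actually needed---and all that the paper's table targets---is $o(h^{-1})$ on the strip of measure $2h$; your cancellation argument does deliver that, so the conclusion survives, but the intermediate estimate you propose to prove is not available.

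Second, for the Neumann rows in the $C_0(\Omega)$ column the leading-order Gr\"unwald asymptotics you invoke are not sufficient. Those boundary rows produce terms of the form $A^{\alpha-1}_{-h,\cdot}\mathcal P(-1)/h$, so the $O(h)$ correction in the Gr\"unwald expansion of the $(\alpha-1)$-derivative, i.e.\ the term $h\left(q-\tfrac{\alpha-1}{2}\right)p_{\beta-\alpha}$ in \eqref{ErrortermsforgrunwaldformulaeA1}, contributes at order one in the sup norm and must be tracked explicitly; this is precisely what generates the $\tfrac{\alpha-1}{2}h\,\partial_-^{\alpha}\mathcal P(-1)$ corrections built into the coefficients of Table \ref{fhexplicitC} for $(\partial^\alpha_-,\mathrm{ND})$ and $(\partial^\alpha_-,\mathrm{NN})$. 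A first-order expansion alone would leave an $O(1)$ residual that does not vanish in $C_0(\Omega)$, where no measure-of-the-strip argument can rescue it.
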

\begin{proof} To obtain the approximate values of appropriate order of 
$\mathcal G^{\BC}_{\pm h}\mathcal P$ and of $\mathcal G^{\BC}_{\pm h}\vartheta_h^\beta$
on $\{x:\iota(\pm x)\ge n\}$ given in Tables \ref{ErrorTableL1} and \ref{ErrorTableC} we use the definition of $G_{\pm h}^{\BC}$, the Gr\"unwald formula, properties of the Gr\"unwald weights, and the Taylor expansion. 
For the Neumann boundary condition in the $C_0(\Omega)$ case  we also use the explicit error term of the Gr\"unwald formula for the $\alpha-1$ derivative. It is  given  in \cite{Baeumer2012a} in the first equation of the proof of Theorem 3.3 where the explicit coefficients of the Gr\"unwald multiplier $\omega_{q,\alpha-1}$ are given in equation (10) of \cite{Baeumer2012a}. In particular, $\beta=\alpha+n$, $n\ge 1$,
 \begin{equation}
\label{ErrortermsforgrunwaldformulaeA1}
A^{\alpha-1}_{h , q} p_\beta (x) =  p_{\beta-\alpha+1} (x) +h \left( q - \frac {\alpha-1} 2 \right)  p_{\beta-\alpha} (x) + O(h^2).
\end{equation}
Combining the approximate values according to Tables \ref{fhexplicit1} and \ref{fhexplicitC}  yields the result.
\end{proof}

Putting the three Lemmata of this section together finishes the proof of Proposition \ref{mainproposition}.  
\begin{table}
\centering
\vline
\begin{tabular}{c|c|c|}
	\hline
  \multicolumn{3}{c}{$X = L_1[-1,1]$ and $\mathcal{P}= I_+^\alpha P, \; P \in L_1[-1,1]$ polynomial  }  
	\vline \\
\hline
	 &$\iota(x)=n $&$\iota(x)=n+1$ \\
	\hline
	$G^{\mathrm{LD}}_{+h}\mathcal P$ &  $\begin{array}{l} P(x) -\dfrac { \lambda }{\alpha \lambda' +\lambda} \mathcal{P}(x+h)/h^\alpha\\ \quad\approx-\dfrac { \lambda }{\alpha \lambda' +\lambda} \mathcal P(1)/h^\alpha \end{array}$   & $\begin{array}{l} P(x)- \mathcal{P}(x+h)/h^\alpha\\ \quad\approx-\mathcal P(1)/h^\alpha\end{array} $ \\ 
  \hline
  $G^{\mathrm{LD}}_{+h}\vartheta^{\alpha-1}_{+h}$ & $-\dfrac{\lambda\vartheta_{+h}^{\alpha-1}(1-\lambda' h)}{\alpha\lambda'+\lambda}/h^\alpha\approx -\dfrac{\lambda p_{\alpha-1}^+(1)}{\alpha\lambda'+\lambda} /h^\alpha$   & $-\dfrac{\vartheta^{\alpha-1}_{+h}(1+\lambda h)}{h^\alpha} \approx-p_{\alpha-1}^+(1)/h^\alpha $ \\
  \hline
   $G^{\mathrm{LD}}_{+h}\vartheta^{0}_{+h}$ & $-\dfrac{\lambda}{\alpha\lambda'+\lambda}/h^\alpha$   & $-1/h^\alpha  $ \\ 
  \hline
    $G^{\mathrm{LD}}_{+h}\vartheta^{\alpha-2}_{+h}$ & $-\dfrac{\lambda\vartheta_{+h}^{\alpha-2}(1-\lambda' h)}{\alpha\lambda'+\lambda}/h^\alpha\approx -\dfrac{\lambda p_{\alpha-2}^+(1)}{\alpha\lambda'+\lambda} /h^\alpha$   & $-\dfrac{\vartheta^{\alpha-2}_{+h}(1+\lambda h)}{h^\alpha} \approx-p_{\alpha-2}^+(1)/h^\alpha $ \\
  \hline\hline
	$ G^{\mathrm{LN}}_{+h}\mathcal P$ &$\begin{array}{l}\lambda' P(x)-  \lambda\dfrac { A^{\alpha -1}_{h,0}\mathcal{P}(x)}{h} + \lambda \dfrac{\mathcal{P}(x+h)}{h^\alpha}
	  \\ \quad \approx -\lambda D^{\alpha-1}\mathcal{P}(1)/h  + \lambda \mathcal{P}(1)/h^\alpha \end{array}$  & $ \begin{array}{l}\dfrac { -\lambda'A^{\alpha -1}_{h,0}\mathcal{P}(x)}{h}-\lambda \mathcal P(x)/h^\alpha\\\quad \approx -\lambda' D^{\alpha-1}\mathcal P(1)/h-\lambda \mathcal P(1)/h^\alpha\end{array}
	$  \\
  \hline
    $G^{\mathrm{LN}}_{+h}\vartheta^{\alpha}_{+h}$ & $-\dfrac{\lambda p_1(1)}h+\lambda p_{\alpha}^+(1)/h^\alpha$   & $-\dfrac{\lambda'p_1(1)}{h} -\lambda p_{\alpha}^+(1)/h^\alpha $ \\ 
     \hline
    $G^{\mathrm{LN}}_{+h}\vartheta^{\alpha-1}_{+h}$ & $-\dfrac{\lambda}h+\lambda p_{\alpha-1}^+(1)/h^\alpha$   & $-\dfrac{\lambda'}{h} -\lambda p_{\alpha-1}^+(1)/h^\alpha $ \\ 
      \hline
    $G^{\mathrm{LN}}_{+h}\vartheta^{\eta}_{+h}$ & $\lambda p_{\eta}^+(1)/h^\alpha$   & $ -\lambda p_{\eta}^+(1)/h^\alpha $ \\ 
     \hline
    $G^{\mathrm{LN}}_{+h}\lambda_{n+1}$ & $\lambda/h^\alpha$   & $ -\lambda/h^\alpha $ \\ 
    \hline

\end{tabular}
\caption{\label{ErrorTableL1}
Approximate value of  $G_{+ h}^{\BC}\mathcal P+O(h^{1-\alpha})$  on the last two grids; i.e., $x\in[1-2h,1]$. Need to show that value of $G_{\pm h}^{\BC}f_h$ with $f_h$ of Table \ref{fhexplicit1} on these two grids is $o(h^{-1})$. We denote $\lambda'=1-\lambda$, $\eta\in\{0,\alpha-2\}$ and $\phi_h\approx \psi_h$ if $|\phi_h(x)-\psi_h(x)|h^{\alpha-1}\le M$.}
\end{table}

\begin{table}
\centering
\vline
\begin{tabular}{c|c|c|}

  \hline
  \multicolumn{3}{c}{$X = C_0(\Omega)$ and $\mathcal{P}= I_-^\alpha (P-P(1)p_0), \; P \in C_0(\Omega)$ polynomial  }  
	\vline \\
	\hline
	 &$\iota(x)=1 $&$\iota(x)=2$ \\
	\hline
	$\mathcal G^{ \mathrm{DR}}_{-h}\mathcal P$ & $\begin{array}{l} \dfrac{\alpha\lambda(P(x)-P(1))}{\lambda'+\alpha\lambda}-\dfrac{\alpha\lambda'\mathcal P(x)+\alpha\lambda \mathcal{P}(x-h)}{(  \lambda'+\alpha \lambda)h^\alpha}\\ \approx\quad
	-\dfrac{\alpha\lambda P(1)}{\lambda'+\alpha\lambda}-\dfrac{\alpha\mathcal P(-1)}{(  \lambda'+\alpha \lambda )h^\alpha}\end{array}$  & $P(x)-P(1)\approx -P(1)$ \\
  \hline

	$\mathcal G^{ \mathrm{DR}}_{-h}\vartheta^{\alpha}_{-h}$ & $\dfrac{\alpha\lambda}{\lambda'+\alpha\lambda}-\dfrac{\alpha}{  \lambda'+\alpha \lambda }\vartheta^{\alpha}_{-h}(-1)/h^\alpha$  & $1$ \\
  \hline
	$\mathcal G^{ \mathrm{DR}}_{-h}\vartheta^\beta_{-h}$ & $-\dfrac{\alpha}{  \lambda'+\alpha \lambda }\vartheta^\beta_{-h}(-1)/h^\alpha$  & $0$ \\
  
  \hline\hline
	$\mathcal G^{\mathrm{NR}}_{-h}\mathcal P$ &$\begin{array}{l}-A^{\alpha-1}_{-h,-\lambda}\mathcal P(-1)/h	\\\quad
	\approx -\dfrac{\partial_-^{\alpha-1}\mathcal P(-1)}{h}+
	(\lambda+\dfrac{\alpha-1}2)\partial_-^{\alpha}\mathcal P(-1)\end{array}
	$&$\begin{array}{l}\lambda \partial_-^{\alpha}\mathcal P(-1)-\dfrac{\lambda'A^{\alpha-1}_{-h,-\lambda-1}\mathcal P(-1)}{h}\\ \approx -\lambda'\dfrac{D_-^{\alpha-1}\mathcal P(-1)}{h}+\\\quad(1+\lambda'(\lambda+\dfrac{\alpha-1}{2}))\partial_-^{\alpha}\mathcal P(-1)\end{array}$ \\
  \hline
	$\mathcal G^{ \mathrm{NR}}_{-h}\vartheta^\alpha_{-h}$ & $-\dfrac{p^-_1(-1)}h+1+\lambda$  & $-\lambda'\dfrac{p^-_1(-1)}h+\lambda'(1+\lambda)+1$ \\
  \hline
	$\mathcal G^{ \mathrm{NR}}_{-h}\vartheta^{\alpha-1}_{-h}$ & $-1/h$  & $-\lambda'/h$ \\
  \hline
	$\mathcal G^{ \mathrm{NR}}_{-h}h^\alpha\lambda'_1$ & $-\lambda'$  & $\lambda\lambda'$ \\
  \hline
	$\mathcal G^{ \mathrm{NR}}_{-h}\vartheta^{0}_{-h}$ & $0$  & $0$ \\
	\hline\hline
	$\mathcal G^{\mathrm{N^*R}}_{-h}\mathcal P$   & $\begin{array}{l}\partial^\alpha_-\mathcal P(-1)-\dfrac{\mathcal P(x-h)-\mathcal P(x)}{h^\alpha}\\\quad\approx \partial^\alpha_-\mathcal P(-1)+\dfrac{\mathcal P'(-1)}{h^{\alpha-1}}\end{array}$& $ \partial^\alpha_-\mathcal P(-1)+\lambda'\dfrac{\mathcal P'(-1)}{h^{\alpha-1}}$\\
	 \hline
	$\mathcal G^{ \mathrm{N^*R}}_{-h}\vartheta^{\alpha}_{-h}$ & $1-p_{\alpha-1}^{-}(-1)/h^{\alpha-1}$  & $1-\lambda'p_{\alpha-1}^{-}(-1)/h^{\alpha-1}$ \\
	 \hline
	$\mathcal G^{ \mathrm{N^*R}}_{-h}\vartheta^{\alpha-1}_{-h}$ & $-1/h$  & $-\lambda'/h$ \\
  \hline
	$\mathcal G^{ \mathrm{N^*R}}_{-h}\vartheta^{0}_{-h}$ & $0$  & $0$ \\
  \hline
\end{tabular}
\caption{\label{ErrorTableC}
Approximate value of   $G_{- h}^{\BC}\mathcal P+O(h^{2-\alpha})$ on the first two grids; i.e. $x\in[-1,-1+2h)$. Need to show that value of $G_{- h}^{\BC}f_h$   with $f_h$ of Table \ref{fhexplicitC} on these two grids converges to $P(-1)-P(1)+a$, where $a$ is the coefficient of $p^\alpha_-$ in $f$.  We denote $\lambda'=1-\lambda$, $\beta\in\{\alpha-1,0\}$, and $\phi_h\approx \psi_h$ if $|\phi_h(x)-\psi_h(x)|/h^{2-\alpha}\le M$.}
\end{table}

\section*{Acknowledgements} We would like to thank Professor Mark Meerschaert for many fruitful discussions and support.

\section*{References}


\end{document}